\title[Finitary isomorphisms of renewal point processes and regenerative processes]
{Finitary isomorphisms of renewal point processes and continuous-time regenerative processes}
\date{\today}
\author{Yinon Spinka}
\address{University of British Columbia.
    Department of Mathematics.
 	Vancouver, BC V6T 1Z2, Canada.}
    \email{yinon@math.ubc.ca}
    \thanks{This work was supported in part by NSERC of Canada.}
  \crefname{theorem}{Theorem}{Theorems}
  \crefname{thm}{Theorem}{Theorems}
  \crefname{mainthm}{Theorem}{Theorems}
  \crefname{lemma}{Lemma}{Lemmas}
  \crefname{lem}{Lemma}{Lemmas}
  \crefname{remark}{Remark}{Remarks}
  \crefname{prop}{Proposition}{Propositions}
  \crefname{defn}{Definition}{Definitions}
  \crefname{corollary}{Corollary}{Corollaries}
  \crefname{cor}{Corollary}{Corollaries}
  \crefname{section}{Section}{Sections}
  \crefname{figure}{Figure}{Figures}
\newtheorem{thm}{Theorem}%[section]
\newtheorem{lemma}[thm]{Lemma}
\newtheorem{prop}[thm]{Proposition}
\newtheorem{cor}[thm]{Corollary}
\newtheorem{quest}[thm]{Question}
\theoremstyle{definition}
\newtheorem{remark}{Remark}
\newcommand{\cL}{\mathcal{L}}
\newcommand{\cF}{\mathcal{F}}
\newcommand{\N}{\mathbb{N}}
\newcommand{\M}{\mathbb{M}}
\newcommand{\R}{\mathbb{R}}
\newcommand{\Z}{\mathbb{Z}}
\newcommand{\E}{\mathbb{E}}
\renewcommand{\Pr}{\mathbb{P}}
\newcommand{\1}{\mathbf{1}}
\def\eqd{\,{\buildrel d \over =}\,}
\newcommand{\fiso}{\simeq_{\text{f}}}
\newcommand{\iid}{i.i.d.}
\begin{document}

\begin{abstract}
	We show that a large class of stationary continuous-time regenerative processes are finitarily isomorphic to one another. The key is showing that any stationary renewal point process whose jump distribution is absolutely continuous with exponential tails is finitarily isomorphic to a Poisson point process. We further give simple necessary and sufficient conditions for a renewal point process to be finitarily isomorphic to a Poisson point process.
This improves results and answers several questions of Soo~\cite{soo2019finitary} and of Kosloff and Soo~\cite{kosloff2019finitary}.
\end{abstract}

\maketitle

\section{Introduction and main results}\label{sec:introduction}

An important problem in ergodic theory is to understand which measure-preserving systems are isomorphic to which other measure-preserving systems. The measure-preserving systems considered here are the flows associated to continuous-time processes and point processes, equipped with the action of $\R$ by translations. The simplest and most canonical of these processes are the Poisson point processes.\footnote{All Poisson point processes in this paper are homogeneous, i.e., have constant intensity.} It is therefore of particular interest to identify those processes which are isomorphic to a Poisson point process.
A consequence of Ornstein theory~\cite{ornstein2013newton} is that any two Poisson point processes are isomorphic to one another. In fact, Poisson point processes are examples of infinite-entropy Bernoulli flows, and any two infinite-entropy Bernoulli flows are isomorphic.

%Poisson point processes are also canonical examples of infinite-entropy Bernoulli flows, and a classical result of Ornstein theory is that any two infinite-entropy Bernoulli flows are isomorphic.

The factor maps provided by Ornstein theory are obtained through abstract existence results. Consequently, such maps are non-explicit and one usually has no control on the coding window, i.e., the size of the window one needs to observe in the source process in order to be able to determine the target process on a fixed interval. Indeed, there is no a priori reason to expect that the coding window be finite. A factor map is said to be \textbf{finitary} if the coding window is almost surely finite, and two processes are \textbf{finitarily isomorphic} if there is an isomorphism from one to the other such that both it and its inverse are finitary. The notion of finitary isomorphism is strictly stronger than that of isomorphism -- two processes may be isomorphic, yet not finitarily isomorphic. As mentioned above, Poisson point processes are of fundamental importance, and hence, so is the following result by Soo and Wilkens.

\begin{thm}[Soo--Wilkens~\cite{soo2018finitary}]\label{thm:PPP}
Any two Poisson point processes are finitarily isomorphic.\footnote{The result in~\cite{soo2018finitary} is for $d$-dimensional Poisson point processes endowed with the group of isometries of $\R^d$; we only consider one-dimensional processes endowed with the group of translations.}
\end{thm}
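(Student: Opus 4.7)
The plan is to exhibit explicit finitary factor maps, in both directions, between Poisson point processes of intensities $\lambda$ and $\mu$. Because time-scaling does not commute with the $\R$-action, one cannot reduce to a common intensity by a change of variables, so both rates must be treated simultaneously.

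My first step would be a ``splitting and merging'' lemma. The superposition of independent PPPs of intensities $\lambda$ and $\mu$ is itself a PPP of intensity $\lambda+\mu$, and the merging map is trivially a translation-equivariant factor with zero coding window. The converse operation, splitting a PPP of intensity $\lambda+\mu$ into two independent PPPs by independently $2$-coloring the points, requires a source of independent fair coins, but these can be extracted finitarily from the process itself via the binary expansion of the inter-arrival spacings: these spacings are i.i.d.\ exponentials and hence are pushed to i.i.d.\ uniforms on $[0,1]$ by their CDF, which in turn produce i.i.d.\ fair bits. Carefully assigning the extracted bits to points yields a finitary isomorphism between a PPP of intensity $\lambda+\mu$ and the product of two independent PPPs of intensities $\lambda$ and $\mu$.

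The main step is a self-absorption statement: for every $\lambda,\mu>0$, a PPP of intensity $\mu$ is finitarily isomorphic to its product with an independent PPP of intensity $\lambda$. Writing $X^{(\nu)}$ for a PPP of intensity $\nu$, chaining self-absorption with the splitting/merging step yields
\[
 X^{(\mu)} \;\fiso\; X^{(\lambda)} \otimes X^{(\mu)} \;\fiso\; X^{(\lambda+\mu)} \;\fiso\; X^{(\lambda)} \otimes X^{(\mu)} \;\fiso\; X^{(\lambda)}.
\]
To realize the self-absorption, I would use a Keane--Smorodinsky style marker-filler argument, adapted to the continuous-time infinite-entropy setting. One identifies a sparse family of recognizable patterns in the input PPP, for instance pairs of consecutive points at distance less than some small $\epsilon$; their occurrences form a stationary point process with positive intensity and exponential gap tails. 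Between consecutive markers, the positional data of the input provides ample independent randomness that can be recycled to synthesize a second auxiliary PPP that is jointly independent of the original on the marker-based intervals.

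The main obstacle is the self-absorption step: making the marker-filler code a genuine isomorphism rather than merely a factor, and bounding the coding window for the inverse. Exponential tails on inter-marker distances ensure that the coding window is almost surely finite in both directions, and arranging that the forward and backward codes both recognize the same marker structure requires a careful symmetric construction, lifting the classical finite-alphabet, discrete-time Keane--Smorodinsky machinery to the Borel-space, continuous-time, infinite-entropy setting of Poisson flows.
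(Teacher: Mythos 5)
Your proposal takes a genuinely different route from the paper's, but it has two concrete gaps. First, in the splitting step, extracting bits deterministically from the inter-arrival spacings (via CDF transform and binary expansion) and using them to color the points does \emph{not} produce an independently-colored Poisson process: the colors would be a deterministic function of the point positions, so conditioned on the points the colors would be a point mass rather than i.i.d.\ Bernoulli. To make this work you must simultaneously \emph{resample} the spacings while extracting the bits, using a measure isomorphism between the conditional law of a spacing and the product of a fresh spacing with independent bits; this resampling device is precisely what the paper's \cref{lem:marking} is for (where the key is to anchor the resampling between ``special points'' so that the map is finitary and invertible). Second, your self-absorption claim $X^{(\mu)} \fiso X^{(\lambda)} \otimes X^{(\mu)}$ is not a reduction: once one has the splitting/merging lemma, self-absorption is \emph{equivalent} to the theorem itself (indeed $X^{(\mu)}\fiso X^{(\lambda)}\otimes X^{(\mu)} = X^{(\mu)}\otimes X^{(\lambda)}\fiso X^{(\lambda)}$ already closes the chain without the intermediate merge/split), so you have simply restated the problem. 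The marker--filler outline does not resolve this: you propose to synthesize an auxiliary PPP ``independent of the original on marker-based intervals'' out of the original's positional randomness, but this again destroys independence unless you simultaneously resample the original, and it is not explained how the inverse would recognize the same markers with an almost surely finite window.

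The paper's self-contained proof (inside \cref{lem:simple-selection}) sidesteps self-absorption entirely. It observes that a Poisson point process of intensity $\lambda$ is a \emph{simple-selection point process} derived from a Poisson point process of any higher intensity $\lambda'$ (take $t=0$ and color points red with probability $\lambda/\lambda'$), and then proves that any simple-selection point process is finitarily isomorphic to the underlying Poisson process. The latter uses the regenerative structure: the ``special'' points form a renewal process that is regenerative for the colored PPP with an absolutely continuous, exponentially-tailed jump distribution, so \cref{lem:regenerative} gives a finitary isomorphism to the colored PPP, and \cref{lem:marking} peels off the colors. This avoids both the independence pitfall (by anchoring a bijective resampling at special points) and the circularity of self-absorption.
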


A basic class of continuous-time processes are continuous-time Markov chains. A continuous-time Markov chain on a finite state-space can be characterized by an initial distribution, a holding rate for each state and a transition matrix with all diagonal entries being zero. If at some time the process is in some state, then after an exponentially distributed holding time it jumps to a new state according to the transition probabilities (with the holding time and the choice of the new state independent of each other and of the history of the process). The embedded chain is the discrete-time Markov chain obtained by observing the continuous-time process at the jump times. A continuous-time Markov chain is irreducible if its embedded chain is. Feldman and Smorodinsky~\cite{feldman1971bernoulli} proved that stationary irreducible continuous-time Markov chains on finite state-spaces are infinite-entropy Bernoulli flows, and hence isomorphic to Poisson point processes.
Soo proved a finitary version of this result, under the additional assumptions that the holding rates are identical for all states and that the embedded Markov chain is aperiodic.

\begin{thm}[Soo~\cite{soo2019finitary}]\label{thm:Soo-Markov}
Any stationary irreducible continuous-time Markov chain with finitely many states, uniform holding rates and an aperiodic embedded chain is finitarily isomorphic to any Poisson point process.
\end{thm}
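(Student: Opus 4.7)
The plan is to decompose the continuous-time Markov chain into two independent ingredients---its jump times and its embedded chain---and to handle each separately. Let $(X_t)_{t\in\R}$ be the stationary CTMC with uniform holding rate $\lambda$. Because the holding rates are uniform, the set of jump times $T$ is a stationary rate-$\lambda$ Poisson point process on $\R$, and the sequence $(Y_n)_{n\in\Z}$ of states immediately after successive jumps is a two-sided stationary discrete-time Markov chain on the finite state space, governed by the embedded transition matrix in its stationary distribution. Moreover $T$ and $(Y_n)$ are independent, and the pair $(T,(Y_n))$ determines $X$ in a manifestly bi-finitary way (given $t$, one locates the largest $t_n\in T$ with $t_n\le t$ and reads off $Y_n$). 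So it suffices to show that the marked point process $(T,(Y_n))$---viewed as a PPP on $\R$ whose point at $t_n$ carries the mark $Y_n$---is finitarily isomorphic to a Poisson point process.

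Step~1, finitarising the marks. The embedded chain $(Y_n)$ is an irreducible aperiodic finite-state Markov chain, so by the Keane--Smorodinsky finitary isomorphism theorem it is finitarily isomorphic to a stationary i.i.d.\ sequence $(Z_n)_{n\in\Z}$ on some finite alphabet of the same (finite) entropy; the alphabet and the letter probabilities can be chosen freely subject only to the entropy constraint. Keeping the point set $T$ fixed and applying this discrete finitary recoding to the marks yields a finitary isomorphism of marked point processes $(T,(Y_n))\leftrightarrow(T,(Z_n))$: the discrete coding window $W_n$ (a.s.\ finite) translates to the temporal coding window $[t_{n-W_n},t_{n+W_n}]$, which is a.s.\ finite because $T$ is a.s.\ locally finite and unbounded from both sides. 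No quantitative control on the tail of $W_n$ is needed here, only almost-sure finiteness.

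Step~2, passing from the marked Poisson process to an unmarked one. The process $(T,(Z_n))$ is a rate-$\lambda$ marked Poisson point process with i.i.d.\ finite-alphabet marks, equivalently a superposition of finitely many independent Poisson point processes of prescribed rates. Erasing marks is trivially a finitary factor map (coding window $=0$), but the harder direction---reconstructing the marks, as an i.i.d.\ colouring, as a finitary factor of the unmarked Poisson process alone---is the main obstacle. It is addressed by the same sort of machinery that proves Theorem~\ref{thm:PPP}: one locates rare but a.s.\ recurrent ``marker'' patterns in the Poisson process (e.g.\ atypically long empty gaps or atypical local configurations), uses them as anchors for a hierarchical self-synchronising code that extracts i.i.d.\ bits from the inter-arrival structure, and assigns these bits as the marks, arranging matters so that both the coding and decoding windows are a.s.\ finite. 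Combining Steps~1 and~2 shows that $X$ is finitarily isomorphic to a rate-$\lambda$ Poisson point process, and Theorem~\ref{thm:PPP} then upgrades this to finitary isomorphism with any Poisson point process. The technical heart of the proof is the marking construction in the nontrivial direction of Step~2, where Theorem~\ref{thm:PPP} is used beyond its black-box statement.
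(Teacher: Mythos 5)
The theorem you are proving is quoted from Soo~\cite{soo2019finitary}; the paper does not reprove it. Your two-step plan essentially reconstructs Soo's original argument, which the paper's remark following the Markov-chain corollaries describes as applying a finitary isomorphism to the embedded chain via Keane--Smorodinsky and then marking the Poisson process. The paper's own route to the strictly stronger \cref{cor:MC} is genuinely different: it treats the CTMC as a finitarily-regenerative process, takes the regeneration set to be the return times to a fixed state, and applies \cref{thm:regenerative}, which reduces to \cref{thm:RP2} for the absolutely continuous, exponential-tailed return-time distribution. That route dispenses with Keane--Smorodinsky entirely and, more importantly, does not need either the uniform-holding-rates hypothesis (which your Step~2 uses to make the jump times a Poisson process independent of the skeleton) or the aperiodicity hypothesis (which your Step~1 needs for Keane--Smorodinsky to apply to the embedded chain).

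One caution on your Step~2: reading bits off the inter-arrival structure and assigning them as marks, without simultaneously resampling the point positions in a bijective manner, would produce marks that are deterministic functions of the points rather than an independent i.i.d.\ colouring, so the output would not have the target law. The correct construction moves randomness bijectively from the point positions into the marks, as in Soo's Proposition~4 or the paper's \cref{lem:marking} (resampling the process between ``special points''). Your phrasing (``extracts \ldots arranging matters so that both coding and decoding windows are a.s.\ finite'') is compatible with having this in mind, but the invariance-of-law requirement is the crux of Step~2 and should be made explicit.
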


Soo asked about the possibility of dropping the latter two assumptions. As we shall see, a consequence of our main results is that this is indeed possible, thereby yielding the finitary counterpart to Feldman and Smorodinsky's result (see \cref{cor:MC}).

Towards motivating our approach to establishing such results, we first note that there is an intimate relation between Poisson point processes and continuous-time Markov chains with uniform holding rates -- the jump times of the latter constitute a Poisson point process, which, moreover, is independent of the embedded Markov chain.
These two properties are at the heart of Soo's proof of \cref{thm:Soo-Markov}. In general, when the holding rates vary between different states, both properties are lost. Instead, to recover some form of independence, we aim to find a subset of the jump times, where the process starts anew. To this end, we consider the underlying point process consisting of the times at which the process jumps to a given state. Since the process is a continuous-time Markov chain, any point process obtained in this way is a \textbf{renewal point process}; this is a simple point process in which the distances between consecutive points are independent and identically distributed. We call this common distribution the \textbf{jump distribution}.
Thus, while a continuous-time Markov chain with uniform holding rates is intimately-related to a Poisson point process, a continuous-time Markov chain with arbitrary holding rates is similarly related to some renewal point process. Furthermore, the jump distribution of the latter renewal point process is easily seen to be absolutely continuous with exponential tails, where a jump distribution $T$ is said to have \textbf{exponential tails} if $\Pr(T \ge t) \le Ce^{-ct}$ for some constants $C,c>0$ and all $t \ge 0$.
%As mentioned, when the holding times are exponential with uniform holding rates, the underlying point process, consisting of the times at which the Markov process jumps to a new state, is a Poisson point process. When the holding times are not exponential, but still identical among the different states, this underlying point process is not a Poisson point process, but is still a renewal point process. In general, when the holding times depend on the state, this underlying process need not be a renewal point process. Nevertheless, it is always the case that the point process consisting of the times at which the Markov process jumps to a given state is a renewal point process.
For this reason, the following result about renewal point processes is key.

\begin{thm}\label{thm:RP}
Any stationary renewal point process whose jump distribution is absolutely continuous with exponential tails is finitarily isomorphic to any Poisson point process.
\end{thm}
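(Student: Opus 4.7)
The plan is to produce a finitary isomorphism between $X$ and a Poisson process $Y$ of intensity $\mu := 1/\E[T]$; by \cref{thm:PPP}, this suffices for $X$ to be finitarily isomorphic to every Poisson process. I would build the factor map and its inverse more or less symmetrically, by converting each process into the other using independent uniform randomness extracted from its own gaps.

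To extract randomness, by absolute continuity of $T$ one can choose an interval $[a,b]$ on which the density $f$ of $T$ is bounded below by some $\varepsilon>0$; then for each gap $T_n$ of $X$ landing in $[a,b]$ the normalized conditional CDF $U_n := (F(T_n)-F(a))/(F(b)-F(a))$ is uniform on $[0,1]$ and is a purely local function of $X$. The exponential tail on $T$ ensures that such ``good'' gaps occur with positive density. A strictly analogous extraction works in $Y$, since the exponential jump distribution is also absolutely continuous with exponential tails. These uniform seeds feed the inverse-CDF trick: a single uniform variable can be turned into either an exponential gap (to synthesize $Y$) or a $T$-distributed gap (to synthesize $X$). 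Combined with a marker scheme in which a prescribed run of consecutive good gaps defines markers occurring with positive density, this carves $\R$ into independent blocks on which each process can be built from the other locally.

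The main obstacle is to ensure that the renewal-to-Poisson and Poisson-to-renewal maps are genuine mutual inverses rather than merely two one-sided finitary factor maps. The natural remedy is a Keane--Smorodinsky-type marker-and-filler construction: set up a single master marker pattern that is simultaneously locatable in both processes and forces the two constructions to agree on every block between consecutive markers. Once this is arranged, the coding windows are almost surely finite, as the exponential tails on both the renewal gaps and the Poisson inter-arrival times give exponential decay for the distance from a given point to the nearest marker in either direction, which in turn controls the window needed to reconstruct the configuration on any fixed interval.
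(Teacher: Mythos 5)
The intuition you describe---extract uniform randomness from gaps via the inverse-CDF trick, carve the line into blocks by marker runs, and fill each block by a Borel isomorphism---captures the ``marking'' machinery used in the paper (cf.\ \cref{lem:marking} and \cref{lem:regenerative}), but your proposal silently skips the actual crux of the problem: making the marker set that you read off from $X$ \emph{coincide in law, and in the coupling}, with the marker set that you read off from the Poisson process $Y$. If markers in $X$ are defined as the ends of runs of gaps $T_n\in[a,b]$ and markers in $Y$ are defined as the ends of runs of exponential gaps in $[a,b]$, the two resulting marker processes are renewal processes with \emph{different} jump distributions (each is some $T^*$-type modification of the underlying jump law). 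They are not equal in distribution, so there is no way for $\varphi$ to carry one onto the other while also being a measure-preserving factor; ``a single master marker pattern that is simultaneously locatable in both processes'' is precisely what needs to be constructed, and the run-of-good-gaps recipe does not produce it. If instead you try to close the loop by reducing $X\fiso M_X\fiso\text{Poisson}$ and $Y\fiso M_Y\fiso\text{Poisson}$ separately, you find yourself again needing to prove that some new renewal process ($M_X$) is finitarily isomorphic to Poisson---the original question in different clothes.

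This is exactly the obstruction the paper overcomes, and it does so with an idea absent from your sketch: a coupling-from-the-past style construction (\cref{prop:RP-special-case}) in which the renewal process $X$ is simulated inside a two-dimensional Poisson process, using the hazard rate $f(t)/\Pr(T>t)$ as the acceptance threshold. The regeneration markers $\tilde\Pi$ are then literally a thinned sub-Poisson process (a ``simple-selection point process''), so they serve simultaneously as a regenerative set for $X$ and as a finitary factor of a genuine Poisson process; \cref{lem:simple-selection} and \cref{cor:multi-RP} close the loop. Making this work requires the hazard rate to be bounded above and below eventually, which a general absolutely continuous $T$ with exponential tails need not satisfy---hence the regularization step in \cref{sec:regularize-jd}, where $T$ is replaced by $T^*_{k\text{-hit}(A)}$ via \cref{lem:stopping-time} and \cref{lem:regularize-T} until condition~\eqref{eq:regular-density} holds. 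Your proposal does not address this regularity issue either. In short: the filler/Borel-isomorphism part of your plan is sound and mirrors the paper, but the marker-coordination step is a genuine gap, and the paper's way past it (embed $X$ inside a Poisson process via the hazard-rate thinning, after regularizing $T$) is the main content of the proof.
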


While the assumption of exponential tails is necessary, the absolute continuity is not. Our next result provides a simple necessary and sufficient condition for a renewal point process to be finitarily isomorphic to a Poisson point process.
%\cref{thm:RP} gives a partial answer to Question~4 in~\cite{kosloff2019finitary}, which asked to find general conditions for a renewal point process to be finitarily isomorphic to a Poisson point process.
%In fact, we can give a complete answer to this question.
A jump distribution $T$ is \textbf{non-singular} if it has a non-zero absolutely continuous component.
For a positive integer $k$, the \textbf{$k$-th convolution power} of $T$ is the distribution of a sum of $k$ independent copies of $T$. We say that $T$ has a \textbf{non-singular convolution power} if its $k$-th convolution power is non-singular for some $k$.
We remark that there exist singular jump distributions with non-singular convolution powers, and there exist singular continuous jump distributions all of whose convolutions powers are singular.

%We now turn to the other assumption on the jump distribution, namely that it is non-singular. This assumption is not necessary. Indeed, our method of proof also allows to obtain results for some singular jump distributions. We demonstrate this with the following generalization of \cref{thm:RP} (the other theorems extend similarly).

%With regards to this, we mention that the assumption that the jump distribution is non-singular can be weakened, whereas the assumption of exponential tails is necessary (see \cref{sec:conclusion}). We also mention that the discrete-time version of this problem was considered by Angel and the author~\cite{angel2019markov}, who showed that any discrete renewal process whose jump distribution has exponential tails and is not supported in a proper subgroup of $\Z$ is a finitary factor of an \iid\ process. It remains open whether such a process is finitarily isomorphic to an \iid\ process.

\begin{thm}\label{thm:RP2}
A stationary renewal point process is finitarily isomorphic to a Poisson point process if and only if its jump distribution has exponential tails and a non-singular convolution power.
\end{thm}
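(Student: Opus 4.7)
The plan is to establish both directions of \cref{thm:RP2}. For sufficiency I aim to reduce to \cref{thm:RP} by constructing, as a finitary factor of $X$, a renewal process $Y$ whose jump distribution is absolutely continuous with exponential tails; \cref{thm:RP} will then give $Y \fiso P$ for any Poisson point process $P$, and I will upgrade this to $X \fiso P$ by extracting auxiliary iid randomness from $P$ to reconstruct the points of $X$ not recorded by $Y$. For necessity I will derive both conditions from structural properties of finitary factors of Poisson.

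The construction of $Y$ will proceed by a two-stage single-gap marker thinning, which sidesteps the origin-dependence that a naive ``thin by $k$'' scheme would introduce. In the first stage, I would choose a measurable set $B \subset \R$ with $T(B) > 0$ small enough that $(T|_{B^c})^{*k}$ remains non-singular; this should be possible because $(T|_{B^c})^{*k} \to T^{*k}$ in total variation as $T(B) \to 0$, and the absolutely continuous mass of a probability measure is $1$-Lipschitz in total variation. Marking each $X$-point whose preceding gap lies in $B$ yields a shift-equivariant finitary factor onto a renewal sub-process $X^*$ whose gap distribution $W$ is that of $\sum_{i=1}^{L-1}G^c_i + G^b$, where $L \sim \text{Geom}(T(B))$, $G^c_i \sim T|_{B^c}$ iid, and $G^b \sim T|_B$, all independent. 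Since the positive-probability event $\{L \ge k+1\}$ in this mixture contributes a convolution $(T|_{B^c})^{*(L-1)} * T|_B$ with $L-1 \ge k$, and since $(T|_{B^c})^{*k}$ is non-singular and convolution with any independent measure preserves an absolutely continuous component, it follows that $W$ inherits a non-trivial absolutely continuous component; exponential tails of $W$ follow from those of $T$ and the geometric law of $L$. In the second stage, I would apply the same marker thinning to $X^*$, now taking the marker set to be the absolutely continuous support of $W$, and obtain a renewal sub-process $Y$ whose gap distribution is absolutely continuous (the ``marked'' summand is AC, and the remainder is an independent geometric sum) with exponential tails. By \cref{thm:RP}, $Y \fiso P$, giving a finitary factor $X \to Y \fiso P$. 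For the reverse factor $P \to X$, I would reconstruct $X$ from $(Y, R)$ by sampling the intermediate $X^*$- and $X$-points inside each $Y$-gap, where $R$ is iid uniform randomness extracted finitarily from $P$ (available because $P$ is an infinite-entropy Bernoulli flow). Inside an AC gap of $Y$, the conditional distributions have geometric counts and absolutely continuous joint densities on positions, so a finitary sampling scheme should be feasible.

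The main obstacle I anticipate lies in Stage~1: carefully verifying, via the total-variation continuity argument, that a choice of $B$ with the required properties exists, and then propagating the non-singularity through nested geometric random sums while keeping track of exponential tails. The companion reverse-sampling step is routine once absolute continuity has been established.

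For necessity I argue by contradiction. Suppose first that every convolution power $T^{*k}$ is singular. For any bounded interval $J \subset \R$, the distribution of $X \cap J$ conditioned on containing at least two points is then supported on a Lebesgue-null set of position-configurations, since the inter-point distances must lie in the singular supports of the corresponding $T^{*j}$. Let $\psi \colon X \to P$ be the finitary factor provided by the hypothesis; then $\psi(X) \cap J$ is a deterministic function of $X$ restricted to a window of radius $R = R(X)$ around $J$, for some a.s.\ finite $R$. Conditioning on $R \le r$ makes the output law a deterministic image of a singular input distribution, hence singular; taking a countable union over $r$ leaves $\psi(X) \cap J$ supported on a Lebesgue-null configuration set on the positive-probability event $\{|P \cap J| \ge 2\}$, contradicting the absolutely continuous position distribution of Poisson restricted to $J$. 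For exponential tails of $T$, I would use the finitary factor $\phi \colon P \to X$ together with the renewal-theoretic identity $\Pr(X \cap [0, t] = \emptyset) = \int_t^\infty \Pr(T \ge s) \, ds / \E T$: partitioning $[0, t]$ into disjoint unit intervals spaced beyond the typical coding radius of $\phi$ makes the corresponding empty-interval sub-events approximately independent, each with probability strictly less than $1$ by the positive intensity of $X$; combined with integrability of the coding radius (a consequence of the finitariness of the isomorphism together with the exponential mixing of Poisson), this should yield $\Pr(X \cap [0, t] = \emptyset) \le Ce^{-ct}$, equivalent to exponential tails for $T$.
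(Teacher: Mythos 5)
Your sufficiency construction (two stages of gap-marker thinning to manufacture an absolutely continuous jump distribution with exponential tails, then invoke \cref{thm:RP}) is in the same spirit as the paper's chain of reductions via the modified jump distributions $T^*_{k\text{-hit}(A)}$. However, there is a real gap at the step you call ``routine'': having a finitary factor $X \to Y$ and, separately, a finitary factor $P \to X$ built from $(Y,R)$ does \emph{not} give a finitary isomorphism $X \fiso P$. You need the two maps to be mutual inverses. The non-trivial content is to show that $X$ is finitarily isomorphic to a \emph{simply-marked} version of $Y$ (the marks encoding the unrecorded excursions of $X$), and this requires an explicitly bijective construction. The paper's \cref{lem:stopping-time}, built on \cref{lem:marking2} and \cref{lem:regenerative}, achieves this via a resampling trick together with the Borel isomorphism theorem, exploiting that the conditional distribution of a selected point given the endpoints and remaining points of its block is continuous. ``A finitary sampling scheme should be feasible'' is precisely the step one has to work for; a sampling kernel by itself is many-to-one and does not yield an isomorphism. (A smaller stylistic issue: you reduce to \cref{thm:RP}, but the paper has no independent proof of \cref{thm:RP} --- it is derived as a corollary of \cref{thm:RP2} --- so a genuinely self-contained argument needs to go through the regular special case \cref{prop:RP-special-case} or an analogue of it.)

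The necessity direction has two problems, one of them fatal. For non-singularity you use the factor $\psi \colon X \to P$ and argue that a deterministic image of a singular distribution is singular, hence $\psi(X)\cap J$ would be singular, contradicting Poisson. This is false as a general principle: a measurable map can push a singular measure to an absolutely continuous one (e.g.\ the Cantor function pushes the uniform measure on the Cantor set to Lebesgue measure). The paper uses the factor in the other direction, $\Lambda \to X$ with $\Lambda$ Poisson, and shows that a certain gap variable in $X$ is conditionally \emph{absolutely continuous} because it can be coupled, via a positive-probability bounded-window event and the Palm inversion formula, to an absolutely continuous functional of $\Lambda$. For exponential tails, your sketch invokes ``integrability of the coding radius'', but finitariness only guarantees that the coding window is finite almost surely; it gives no moment bound, and the parenthetical claim that integrability follows from exponential mixing of Poisson is not correct. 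The paper sidesteps this by choosing a finite $r$ with $\Pr(S_0<1,\,W<r)>0$ and then using translates of this single positive-probability event to construct a family of independent events that dominate the relevant tail, which works without any moment assumption on $W$.
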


\cref{thm:RP2} gives an answer to Question~4 in~\cite{kosloff2019finitary}, which asked to find general conditions for a renewal point process to be finitarily isomorphic to a Poisson point process.
%Thus, \cref{thm:RP2} answers Question~4 of~\cite{kosloff2019finitary}.
We mention that the necessary conditions for being finitarily isomorphic to a Poisson point process are also necessary for being a finitary factor of a Poisson point process (see \cref{prop:necessary}).
We also mention that a discrete-time version of the finitary factor problem was considered by Angel and the author~\cite{angel2019markov}.

\medbreak

\cref{thm:RP} tells us that the times at which a continuous-time Markov chain jumps to a given state can be finitarily coded by a Poisson point process. However, it does not provide a means by which to code the excursions of the continuous-time Markov chain in between such times. To encode this additional information, we consider point processes with additional structure.

A \textbf{marked point process} is a point process in which each point of the process has an associated value, which we call a mark. The marks are assumed to be elements in a standard Borel space.
%We note that marked point process and colored point processes are technically the same, but we reserve the latter term for the situation in which the marks take values in a finite or countable space.
A marked point process is \textbf{simply-marked} if, given the points of the process, the marks of the points are conditionally independent, and the conditional distribution of the mark of any point depends only on the distance to the nearest point to its left.
Our next result is an extension of (the if part of) \cref{thm:RP2} to simply-marked renewal point processes.

\begin{thm}\label{thm:simply-marked-RP}
Any stationary simply-marked renewal point process whose jump distribution has exponential tails and a non-singular convolution power is finitarily isomorphic to any Poisson point process.
\end{thm}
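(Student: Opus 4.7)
The plan is to reduce the problem to one of absorbing extra i.i.d.\ uniform randomness into a Poisson point process, and then invoke \cref{thm:RP2} for the unmarked portion. First, I reduce to the case of i.i.d.\ $\mathrm{Unif}[0,1]$ marks independent of the renewal process $X$. Since the mark space $S$ is standard Borel, the conditional distribution $\mu_g$ of a mark given its left gap $g$ admits a measurable parametrization $f \colon \R_+ \times [0,1] \to S$ with $f(g, U) \sim \mu_g$ for $U \sim \mathrm{Unif}[0,1]$, together with a measurable inverse $h \colon \R_+ \times S \to [0,1]$ satisfying $h(g, f(g, U)) = U$ almost surely. The point-by-point substitution $M_i \leftrightarrow U_i := h(G_i, M_i)$ depends only on the gap and mark at a single renewal point, hence is local and invertible; this yields a finitary isomorphism $(X, (M_i)_i) \fiso (X, (U_i)_i)$. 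Henceforth assume $(U_i)_i$ are i.i.d.\ $\mathrm{Unif}[0,1]$ independent of $X$.

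The core step is to establish $(X, (U_i)_i) \fiso P$ for any Poisson point process $P$. The guiding idea is that a Poisson has infinite entropy per unit time, so it has room to encode both the renewal structure and the i.i.d.\ marks simultaneously. I would adapt the construction behind \cref{thm:RP2} to produce, from the Poisson, not only the renewal $X$ but also a sequence of i.i.d.\ $\mathrm{Unif}[0,1]$ random variables attached to the renewal points, independent of $X$. Concretely, once a finitary isomorphism $P \leftrightarrow X$ is set up via \cref{thm:RP2}, one would reserve a portion of the Poisson's randomness (e.g., via a finitary thinning or via residual structure of the isomorphism) to carry additional randomness independent of $X$, and then extract the i.i.d.\ uniforms cell-by-cell on the renewal partition $\{[X_i, X_{i+1})\}_i$ using a local measure-preserving bijection between ``Poisson on a cell of given length together with an independent $\mathrm{Unif}[0,1]$'' and ``Poisson on a cell of the same length'' (realized, e.g., by interleaving binary expansions). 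The inverse direction runs the construction backwards: decode $(U_i)_i$ out of the Poisson cell-by-cell, then recover $X$ via the inverse isomorphism of \cref{thm:RP2}.

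The main obstacle is implementing the cell-by-cell encoding and decoding as genuinely finitary maps with almost-surely finite coding windows, despite random and occasionally degenerate cell lengths. Cells that are too short (or empty) to support the local bijection must be handled by a deferral scheme that postpones the encoding of $U_i$ to a nearby ``good'' cell; the exponential-tail assumption on the jump distribution should guarantee that good cells occur at sufficient density for deferrals to terminate within a bounded window almost surely. A further subtlety is ensuring that the portion of the Poisson reserved for carrying the marks is genuinely independent of $X$, which requires care with the internal structure of the isomorphism of \cref{thm:RP2} and is where most of the technical work lies.
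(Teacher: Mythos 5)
Your first reduction has a genuine gap that breaks the argument. You want a measurable parametrization $f(g,U)\sim\mu_g$ with an inverse $h$ satisfying $h(g,f(g,U))=U$ almost surely, and then to swap $M_i\leftrightarrow U_i$ point by point. But if the conditional mark law $\mu_g$ has atoms, $f(g,\cdot)$ cannot be essentially injective, so no such $h$ exists: many $U$'s collapse to the same mark and cannot be recovered. The theorem as stated places no continuity hypothesis on the marks, and the motivating applications (alternating and Markov-colored point processes, excursions of Markov chains) have marks in finite or otherwise atomic spaces, so this is exactly the case that matters. Your substitution $(X,(M_i))\fiso(X,(U_i))$ is simply false there — the right-hand side contains strictly more information.

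The paper sidesteps this entirely: it never turns marks into i.i.d.\ uniforms. Instead, \cref{lem:marking2} shows directly that an unmarked renewal process $X$ with continuously-divisible (in particular non-singular) jump distribution is finitarily isomorphic to any simply-marked version $X^+$. The mechanism is to identify finitely-determined ``special points'' at which $X$ regenerates, and within each block between consecutive special points to apply a Borel isomorphism that resamples the (conditionally continuous) location of one interior point while simultaneously generating all the marks in that block. The continuous source of randomness comes from the non-singularity of the jump distribution — not from the marks — so atomic marks are no obstruction. The proof of the theorem is then immediate: $X^+\fiso X$ by \cref{lem:marking2}, and $X\fiso$ Poisson by \cref{thm:RP2}.

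Your second step also chooses a harder road than necessary. Threading extra i.i.d.\ uniforms through the Poisson, independently of $X$, and encoding them cell-by-cell is essentially the approach of Soo's earlier work, which required a nontrivial extra tool (the Holroyd--Lyons--Soo randomizer) or the kind of deferral scheme you only sketch. You correctly identify that guaranteeing independence of the ``reserved'' Poisson randomness from $X$ is ``where most of the technical work lies,'' but you leave it unresolved. The paper's route avoids this entirely by proving the mark-absorption as an isomorphism $X\fiso X^+$ of renewal processes, decoupled from the Poisson step.
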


\cref{thm:simply-marked-RP} will easily follow from \cref{thm:RP2} and a technique for marking point processes, which borrows from ideas developed by Ball~\cite{ball2005poisson}, Kosloff--Soo~\cite{kosloff2019finitary} and Soo~\cite{soo2019finitary}. This simple extension of (the if part of) \cref{thm:RP2} turns out to be very useful. Indeed, given a continuous-time Markov chain, by viewing each mark as the excursion of the process between the previous and current point, a continuous-time Markov chain can be seen as a simply-marked renewal point process. In a similar manner, many processes can be naturally viewed as simply-marked renewal point processes to which \cref{thm:simply-marked-RP} can then be applied.
We now describe a class of such processes called regenerative processes. Intuitively, a regenerative process is a process which can be decomposed into \iid\ segments. Several variants of the definition exist in the literature, and we give another definition which is relevant for our purposes. To avoid technicalities, we restrict our attention to stationary processes $X=(X_t)_{t \in \R}$ taking values in a Polish space and having paths that are almost surely right-continuous with left-hand limits.

We say that $X$ is \textbf{regenerative} if there exists a renewal point process $\Lambda$ such that, given the points $t_1<t_2<\cdots$ of $\Lambda$ in $(0,\infty)$, the segments $\{(X_t)_{t_n \le t < t_{n+1}}\}_n$ between consecutive points are conditionally independent, and the conditional distribution of each segment $(X_{t+t_n})_{0 \le t < t_{n+1}-t_n}$ depends only on its length $t_{n+1}-t_n$.
Such a $\Lambda$ is said to be regenerative for $X$.
While some definitions allow $\Lambda$ to have additional randomness, we always require that $\Lambda$ is measurable with respect to $X$. In fact, we will only be interested in a more restrictive notion.
We say that $X$ is \textbf{finitarily-regenerative} if there exists such a $\Lambda$, which, in addition, is a finitary factor of $X$.
In this case, we say that $\Lambda$ is finitarily-regenerative for $X$, and we say that the jump distribution of $\Lambda$ is an associated jump distribution of $X$.
Note that $X$ may have more than one associated jump distribution.

%we have that $(X_{t+T_1})_{t \ge 0}$ is independent of $\cF_{T_1}$ and has the same distribution as $(X_{t+T_0})_{t \ge 0}$. By considering the sequence of stopping times $T_{n+1} = T_n + T \circ \cT_{T_n}$, we get a sequence of \iid\ excursions $\{(X_{t+T_n})_{0 \le t < T_{n+1}-T_n}\}_{n \ge 1}$. The excursions lengths $\{T_{n+1}-T_n\}_{n \ge 1}$ are \iid\ random variables having the same distribution as $T$, which we call an associated jump distribution of $X$. Note that $X$ may have more than one associated jump distribution, as there may be various possible ways to represent it as above.
%We also note that if we were to define $\cF_t$ as the $\sigma$-algebra generated by $(X_s)_{s<t}$ (including negative times $s$), then the obtained notion would be that of a regenerative process, rather than a finitarily-regenerative process.

\begin{thm}\label{thm:regenerative}
Any stationary finitarily-regenerative process having an associated jump distribution that has exponential tails and a non-singular convolution power is finitarily isomorphic to any Poisson point process.
\end{thm}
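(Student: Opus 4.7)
The plan is to reduce \cref{thm:regenerative} to the already-established \cref{thm:simply-marked-RP} by recasting the finitarily-regenerative process $X$ as a simply-marked renewal point process $\Lambda^*$ whose underlying point process is the given finitarily-regenerative $\Lambda$ and whose marks encode the excursions of $X$. Once $X$ and $\Lambda^*$ are shown to be finitarily isomorphic, applying \cref{thm:simply-marked-RP} to $\Lambda^*$ and composing the two finitary isomorphisms will yield the conclusion.

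To construct $\Lambda^*$, enumerate the points of $\Lambda$ as $\{t_n\}_{n \in \Z}$ in increasing order and attach to each point $t_{n+1}$ the mark $m_{n+1}=(X_s)_{t_n \le s < t_{n+1}}$, namely the excursion of $X$ between consecutive points of $\Lambda$. These marks take values in the standard Borel space of pairs $(L,f)$ with $L>0$ and $f$ a right-continuous path with left-hand limits on $[0,L)$ valued in the Polish state space of $X$. By the regenerative property, given the points of $\Lambda$ the excursions $\{m_n\}$ are conditionally independent, and the conditional distribution of $m_{n+1}$ depends only on $t_{n+1}-t_n$, the distance from $t_{n+1}$ to its nearest neighbor on the left. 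Thus $\Lambda^*$ is a stationary simply-marked renewal point process whose jump distribution is the prescribed associated jump distribution of $X$.

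The step that requires care is verifying the finitary isomorphism $X \leftrightarrow \Lambda^*$. For $X \to \Lambda^*$: recovering $\Lambda^*$ restricted to any bounded window $[-R,R]$ requires (a) identifying the points of $\Lambda$ in the window, which is finitary by the assumption that $\Lambda$ is a finitary factor of $X$, and (b) reading off the excursion marks, for which one also needs $X$ back to the last point of $\Lambda$ strictly before $-R$; since this point is at an almost surely finite distance, the overall coding window is almost surely finite. For $\Lambda^* \to X$: to recover $X_t$, locate the first point $t_{n+1}$ of the underlying point process with $t_{n+1} > t$ (almost surely at finite distance) and read $X_t$ directly from the relevant coordinate of its mark. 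Both directions are thus finitary.

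Finally, since by hypothesis the associated jump distribution has exponential tails and a non-singular convolution power, \cref{thm:simply-marked-RP} applies to $\Lambda^*$ and provides a finitary isomorphism to any Poisson point process. Composing this with the finitary isomorphism between $X$ and $\Lambda^*$ produces the desired finitary isomorphism between $X$ and any Poisson point process. Beyond this composition, the proof is essentially bookkeeping on top of \cref{thm:simply-marked-RP}; the one point needing verification is the almost sure finiteness of the coding windows for the $X \leftrightarrow \Lambda^*$ correspondence, which follows from the finitariness of $\Lambda$ as a factor of $X$ together with the local finiteness of $\Lambda$.
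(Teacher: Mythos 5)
Your proposal is correct and follows essentially the same route as the paper: the paper's proof of \cref{thm:regenerative} invokes \cref{lem:regenerative}, which constructs exactly your excursion-marked process $\Lambda^*$ (called $Y^+$ there), verifies $X \fiso \Lambda^*$ by the same finitariness argument, and then unmarked it via \cref{lem:marking2} before applying \cref{thm:RP2}. You package the last two steps into a single invocation of \cref{thm:simply-marked-RP}, which is itself proved by composing \cref{lem:marking2} and \cref{thm:RP2}, so the underlying chain of isomorphisms is identical.
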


We mention that while we have formulated \cref{thm:regenerative} for continuous-time processes, it also applies to marked point processes, where a finitarily-regenerative marked point process is defined similarly. Alternatively, one may view a marked point process as a continuous-time process to which \cref{thm:regenerative} may then be applied directly when it satisfies the required assumptions.

\medskip

As with many previous constructions of finitary factors and isomorphisms (e.g., \cite{ball2005poisson,kosloff2019finitary,soo2019finitary,angel2019markov}), our constructions too are elementary and mostly explicit. While our proof borrows some ideas from these constructions, it does not rely on previous results about finitary factors and isomorphisms, and therefore provides a self-contained proof.

\medskip

The rest of the paper is organized as follows. In \cref{sec:app}, we introduce some further results, all of which are applications of the above theorems. In \cref{sec:preliminary}, we provide the required preliminary definitions and some background on finitary isomorphisms. In \cref{sec:simply-marked}, we obtain results relating renewal point processes to their simply-marked versions, and then prove \cref{thm:simply-marked-RP} and \cref{thm:regenerative} using these and \cref{thm:RP2}. The proof of the `if' part of \cref{thm:RP2} is split into the two parts: the first part, given in \cref{sec:renewal-with-regular-jd}, shows that \cref{thm:RP} holds under an additional regularity assumption on the jump distribution; the second part, given in \cref{sec:regularize-jd}, shows how one may regularize the jump distribution in order to reduce the theorem to the former situation. In \cref{sec:proof-main-thm}, we put the two parts together to complete the proof of the `if' part of \cref{thm:RP2} and also give the proof of the `only if' part.

\section{Applications}
\label{sec:app}

In this section, we describe some applications of our main theorems. These are mostly immediate consequences of the main theorems and the main goal here is to give a flavor of the possibilities by pointing out various classes of processes to which our results apply.
When a marked point process has its marks in a finite space, we sometimes refer to the marks as \textbf{colors} and to the marked point process as a \textbf{colored point process}.

%\cref{thm:RP} allows us to obtain results for a more general class of processes, which includes renewal point processes and continuous-time Markov chains on countable state spaces.

We begin with an application to a simple class of colored point processes called alternating point processes.
The \textbf{alternating point process} corresponding to a stationary point process is obtained by coloring the points of the latter red and blue, alternating between the two colors, and with any given point receiving each color with equal probability. In particular, given the uncolored points, the sequence of colors is one of two equally-likely possibilities.
It is therefore reasonable to expect that one cannot obtain an alternating Poisson point process as a factor of a Poisson point process in which the (uncolored) points of the former coincide with those of the latter.
Indeed, Holroyd, Pemantle, Peres and Schramm~\cite[Lemma~11]{holroyd2009poisson} prove that this is not possible.
On the other hand, if one does not insist that the colored points are the same points as the original Poisson point process, then Ornstein theory implies that this is possible. Indeed, it follows from a result of Feldman and Smorodinsky~\cite{feldman1971bernoulli} that alternating Poisson point processes are Bernoulli.
The question of whether such processes are finitarily isomorphic to Poisson point processes was raised in~\cite{soo2019finitary}.
While an alternating Poisson point process is not a simply-marked point process (so that \cref{thm:simply-marked-RP} does not apply), it is a finitarily-regenerative process. Indeed, the set of red-colored points is (finitarily-)regenerative for the process. Since the red-colored points constitute a stationary renewal point process whose jump distribution is the sum of two independent exponential random variables, \cref{thm:regenerative} implies that alternating Poisson point processes are in fact finitarily isomorphic to Poisson point processes, thereby affirmatively answering Question~1 of~\cite{soo2019finitary}.

\begin{cor}\label{cor:alternating}
Any Poisson point process is finitarily isomorphic to its alternating point process. More generally, any alternating renewal point process whose jump distribution has exponential tails and a non-singular convolution power is finitarily isomorphic to any Poisson point process.
\end{cor}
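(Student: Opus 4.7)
The plan is to reduce this to \cref{thm:regenerative} by exhibiting an explicit finitarily-regenerative sub-point-process inside the alternating point process. Let $X$ denote the alternating renewal point process with underlying jump distribution $T$, and let $\Lambda \subseteq X$ be the set of red-colored points of $X$. I would show that $\Lambda$ is finitarily-regenerative for $X$ with an associated jump distribution satisfying the hypotheses of \cref{thm:regenerative}, and then invoke that theorem (applied in its marked-point-process form).

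First I would verify that $\Lambda$ is a stationary renewal point process whose jump distribution is $T^{*2}$, the distribution of a sum of two independent copies of $T$. This is immediate: the spacing between two consecutive red points is the sum of two consecutive spacings of the underlying renewal process, and such pairs are \iid; stationarity of $\Lambda$ follows from the stationarity of $X$ together with the uniform coin flip governing the choice of color-sequence. Next I would verify that $\Lambda$ is finitarily-regenerative for $X$: the factor map producing $\Lambda$ from $X$ is trivially finitary, since to decide whether a given time $t$ lies in $\Lambda$ it suffices to inspect the mark of $X$ at $t$. For the regenerative property, conditionally on $\Lambda$ each segment of $X$ between two consecutive red points contains exactly one blue point, whose position within the segment, given the segment length $L$, has the conditional distribution of $T$ given $T + T' = L$ (where $T'$ is an independent copy of $T$); by the independence of the underlying renewal increments, these conditional positions are independent across distinct segments and depend on the segment only through its length.

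It remains to check that $S := T^{*2}$ has exponential tails and a non-singular convolution power. Exponential tails are preserved under convolution of two distributions each with exponential tails, which handles the first property. For the second, if $T^{*k}$ is non-singular then so is $T^{*m}$ for every $m \ge k$, since the convolution of an absolutely continuous measure with any probability measure is absolutely continuous; consequently $S^{*k} = T^{*2k}$ is non-singular. Applying \cref{thm:regenerative} to $X$ with associated jump distribution $S$ then yields the general statement, and the first statement is the special case where $T$ is exponential, so that $S$ is the Gamma distribution with shape parameter $2$, which is trivially absolutely continuous with exponential tails. There is no substantive obstacle here: the regenerative structure is manifest in the very construction of the alternating point process, and the only thing to verify is that the two required properties pass from $T$ to $T^{*2}$.
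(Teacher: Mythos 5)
Your proposal is correct and mirrors the paper's argument: the paper likewise observes that the red points are finitarily-regenerative for the alternating process and constitute a renewal point process with jump distribution the two-fold convolution of the original, and then invokes \cref{thm:regenerative}. The extra verifications you supply (that exponential tails and the existence of a non-singular convolution power pass from $T$ to $T^{*2}$) are correct and are left implicit in the paper.
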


An alternating Poisson point process can be seen as a continuous-time Markov chain. Indeed, in this case, the embedded chain deterministically alternates between the red and blue states, with transitions occurring at a constant rate. It is straightforward that a stationary irreducible continuous-time Markov chain on a finite state-space is a regenerative process, with the return times to any fixed state being (finitarily-)regenerative and constituting a stationary renewal point process having an absolutely continuous jump distribution with exponential tails. Thus, \cref{thm:regenerative} yields that such processes are finitarily isomorphic to Poisson point processes. This leads to the following generalization of \cref{thm:Soo-Markov} without the assumptions that the holding rates are uniform and that the embedded chain is aperiodic.

\begin{cor}\label{cor:MC}
Any stationary irreducible continuous-time Markov chain with finitely many states is finitarily isomorphic any Poisson point process.
\end{cor}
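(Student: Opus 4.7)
The plan is to reduce \cref{cor:MC} to \cref{thm:regenerative} by exhibiting a natural regenerative structure on the Markov chain. Let $X$ be a stationary irreducible continuous-time Markov chain on a finite state space $S$ (we may assume $|S| \ge 2$, as otherwise $X$ is constant and no such isomorphism can exist because a Poisson point process has infinite entropy). Fix any state $s_0 \in S$ and let $\Lambda$ be the random set of times at which $X$ jumps into $s_0$. The first task is to check that $\Lambda$ is finitarily-regenerative for $X$ in the sense of the paper, after which it remains only to verify that the jump distribution of $\Lambda$ has exponential tails and a non-singular convolution power.

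The point process $\Lambda$ is stationary (inherited from the stationarity of $X$), and the strong Markov property applied at successive entrance times to $s_0$ yields that the inter-arrival distances of $\Lambda$ are i.i.d., so $\Lambda$ is a stationary renewal point process. The same strong Markov argument shows that the excursions $(X_t)_{t_n \le t < t_{n+1}}$ between consecutive points of $\Lambda$ are i.i.d.\ random segments; in particular they are conditionally independent given $\Lambda$, and their common law given length matches the definition of regenerative. Moreover, $\Lambda$ is a finitary factor of $X$, since whether $t \in \Lambda$ is determined by the values of $X$ in any open neighborhood of $t$ (one observes that $X$ is not $s_0$ immediately before $t$ and equals $s_0$ at $t$). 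Thus $X$ is finitarily-regenerative with associated jump distribution $T$ equal to the return-time distribution to $s_0$.

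It remains to check the two properties of $T$. For exponential tails, irreducibility on the finite state space yields constants $t_0, p > 0$ such that from any state the chain visits $s_0$ by time $t_0$ with probability at least $p$; iterating over disjoint intervals of length $t_0$ gives $\Pr(T > k t_0) \le (1-p)^k$. For a non-singular convolution power, I will in fact show $T$ itself is absolutely continuous: conditional on the (countably many) possible sequences $s_0, s_{i_1}, \ldots, s_{i_k}, s_0$ of states visited during a single excursion, the return time is a sum of independent exponential holding times and hence absolutely continuous, so $T$ is a countable mixture of absolutely continuous laws and therefore absolutely continuous. \cref{thm:regenerative} now applies and delivers the finitary isomorphism to any Poisson point process. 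There is no real obstacle here---all the difficulty has been absorbed into \cref{thm:regenerative} and hence into \cref{thm:RP2}; the present corollary is just a matter of recognizing the standard regenerative structure of a continuous-time Markov chain at return times to a fixed state and checking that the associated renewal hypotheses hold.
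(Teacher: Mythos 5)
Your proposal is correct and follows essentially the same route as the paper: take the return times to a fixed state $s_0$ as the finitarily-regenerative renewal point process, observe that its jump distribution is absolutely continuous with exponential tails, and invoke \cref{thm:regenerative}. The paper states this reduction without further detail in the paragraph preceding the corollary; your verification of the hitting-time estimates and the mixture argument for absolute continuity simply fills in what is left implicit there.
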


Similarly, an alternating renewal point process is a special case of a stationary colored renewal point process in which the $\Z$-process induced by the colors, the so-called \textbf{skeleton}, is a (discrete-time) Markov chain that is independent of the points of the process. In the case when the skeleton is a general irreducible Markov chain on a finite state-space, as before, the points of any fixed state/color are regenerative and constitute a stationary renewal point process. This setting includes continuous-time Markov chains with uniform holding rates, but does not include the case where the holding rates depend on the states. To accommodate the latter, we consider colored point process which we call Markov-colored point processes.

A \textbf{Markov-colored point process} is a colored point process in which the color of and distance to the next point depends on the history of the process only through the color of the current point.
Such a process may also be described as follows: for each color $i$, there is a jump distribution $T_i$ and a random variable $C_i$ taking values in the set of colors, and if the process has a point of color $i$ at some time, then (independently of the history of the process up to this time) the next point appears after a holding time $T_i$ and receives color $C_i$.

We note that when $T_i$ is exponential, $C_i$ is almost surely not equal to $i$, and $T_i$ and $C_i$ are independent of each other, this gives an equivalent description of a continuous-time Markov chain. In general, we need not make any of these assumptions: $T_i$ may be any jump distribution, $C_i$ may equal $i$ with positive probability, and $(T_i,C_i)$ may be coupled in any manner. Either way, the skeleton of a Markov-colored point process is always a discrete-time Markov chain.
Thus, Markov-colored point processes generalize continuous-time Markov chains, allowing non-exponentially-distributed holding times, allowing a state to be reentered without leaving it, and allowing dependence between the random state to which the process jumps and the holding time leading to that jump.
We say that a Markov-colored point process is irreducible if its skeleton is.

%Thus, loosely speaking, \cref{cor:MC} extends from Markov chains to a class of irreducible jump processes in which the holding times are not necessarily exponential (but are non-singular with exponential tails) and the random state to which the process jumps may depend on the holding time leading to that jump.

As with a continuous-time Markov chain, in this setting too, given a stationary irreducible Markov-colored point process $X$, the points of any fixed color are (finitarily-)regenerative and constitute a stationary renewal point process. We say that the jump distribution of the latter is an associated jump distribution of $X$.
When this jump distribution has exponential tails and a non-singular convolution power, \cref{thm:regenerative} applies and shows that $X$ is finitarily isomorphic to a Poisson point process. Thus, we have the following generalization of \cref{cor:MC}.
%Thus, loosely speaking, \cref{cor:MC} extends from Markov chains to a class of irreducible jump processes in which the holding times are not necessarily exponential (but are non-singular with exponential tails) and the random state to which the process jumps may depend on the holding time leading to that jump.

\begin{cor}\label{cor:MCPP}
Any stationary irreducible Markov-colored point process having an associated jump distribution that has exponential tails and a non-singular convolution power is finitarily isomorphic to any Poisson point process.
\end{cor}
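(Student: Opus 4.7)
The plan is to deduce this corollary directly from \cref{thm:regenerative} by exhibiting the color-$i$ points of $X$ as a finitarily-regenerative point process for $X$, where $i$ is a color whose associated jump distribution $T$ has exponential tails and a non-singular convolution power. Such an $i$ exists by hypothesis.

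Let $\Lambda$ denote the set of color-$i$ points of $X$. I would verify three things. First, $\Lambda$ is a finitary factor of $X$: to decide whether a given point lies in $\Lambda$, one needs only observe the color of that point, so the coding window reduces to a single point and the factor map is trivially finitary. Second, $\Lambda$ is a stationary renewal point process with jump distribution $T$: stationarity is inherited from $X$, and the \iid\ structure of the inter-arrival gaps is an immediate consequence of the defining Markov property of Markov-colored processes --- the independence of the successive pairs $(T_j,C_j)$ across skeleton steps ensures that each time the chain returns to color $i$, the future evolution is a fresh independent copy of the process started from $i$. Third, $X$ is regenerative for $\Lambda$ in the sense of the paper: unconditionally, the excursions $\{(X_{t+\tau_n})_{0\le t<\tau_{n+1}-\tau_n}\}_n$ between consecutive color-$i$ points $\tau_n$ are \iid\ by the Markov property, each distributed as a generic $X$-excursion from color $i$ back to color $i$; conditioning on the lengths $\tau_{n+1}-\tau_n$ therefore yields conditional independence of the excursions, with the conditional distribution of each depending only on its length.

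With these three items in place, \cref{thm:regenerative} applies directly and yields the desired finitary isomorphism with any Poisson point process.

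The main (and essentially only) obstacle is the third verification. While this is a standard consequence of the strong Markov property for the skeleton, some care is needed to handle the passage between the stationary version of $X$ and its Palm version (the process viewed at a typical color-$i$ point), and to confirm that the conditional-distribution-depends-only-on-length property holds in precisely the form required by the definition of a regenerative process. I do not anticipate any novel difficulties beyond these standard points.
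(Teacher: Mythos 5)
Your proposal follows the same route as the paper: take $\Lambda$ to be the set of color-$i$ points for a color $i$ whose associated jump distribution satisfies the hypotheses, verify that $\Lambda$ is a finitarily-regenerative renewal point process for $X$, and apply \cref{thm:regenerative} (with the colored point process viewed as a continuous-time process, as the paper remarks). One small slip: you write ``$X$ is regenerative for $\Lambda$'' where you mean ``$\Lambda$ is regenerative for $X$,'' but the excursion decomposition you describe is the correct one.
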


\smallskip

Let us now consider continuous-time Markov chains with countably many states (all Markov chains here are implicitly assumed to be right-continuous). The assumption of finitely many states in \cref{cor:MC} is not essential and was only used to ensure \textbf{exponential recurrence} of the continuous-time Markov chain. By this we mean that the chain is recurrent and that starting from some state, the time it takes to escape and then return to that state has exponential tails. For instance, if the holding rates are uniformly bounded below and the embedded discrete-time Markov chain has exponential return times, then the continuous-time Markov chain is exponentially recurrent. In fact, as long as the chain is recurrent (even if the embedded chain is null-recurrent), one may always take the holding rates to grow sufficiently fast so that the continuous-time Markov chain is exponentially recurrent.
As with chains on finite state-spaces, given any stationary irreducible recurrent continuous-time Markov chain on a countable state-space, the return times to any fixed state are (finitarily-)regenerative for the chain and constitute a stationary renewal point process having an absolutely continuous jump distribution.
\cref{thm:regenerative} thus yields the following.

\begin{cor}
Any stationary irreducible exponentially-recurrent continuous-time Markov chain with countably many states is finitarily isomorphic to any Poisson point process.
\end{cor}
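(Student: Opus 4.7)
The plan is to invoke \cref{thm:regenerative} directly. The only work required is to exhibit a finitarily-regenerative structure for $X$ whose associated jump distribution has exponential tails and a non-singular convolution power. Fix any state $s_0$ in the state space of the chain $X$, and let $\Lambda$ denote the (random) set of times at which $X$ jumps into $s_0$, so that a point of $\Lambda$ is a time $t$ with $X_{t^-} \ne s_0$ and $X_t = s_0$. By the strong Markov property applied at consecutive hitting times of $s_0$, the excursions of $X$ between consecutive points of $\Lambda$ are \iid, so $\Lambda$ is a renewal point process for which $X$ is regenerative. Stationarity of $X$ (which is implicit in the statement and which follows from exponential recurrence, as this is stronger than positive recurrence) transfers to $\Lambda$.

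To see that $\Lambda$ is a finitary factor of $X$, observe that exponential recurrence rules out explosion, so on any bounded interval the chain $X$ makes only finitely many jumps. Since $X$ is right-continuous with left-hand limits, the set $\Lambda \cap [-R, R]$ can be determined from $X\vert_{[-R,R]}$ (or a negligibly small enlargement needed to witness the jump at an endpoint), yielding a finite coding window. Hence $\Lambda$ is finitarily-regenerative for $X$.

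It remains to verify the two properties of the associated jump distribution $T$, the return time to $s_0$. By the definition of exponential recurrence, $T$ has exponential tails. For absolute continuity, write $T = H + E$, where $H$ is the first holding time in $s_0$ (exponentially distributed with rate $\lambda_{s_0} > 0$) and $E$ is the duration of the first excursion away from $s_0$. By the Markov property $H$ is independent of $E$, so $T$ is the convolution of an exponential density with an arbitrary distribution, and is therefore absolutely continuous. In particular, $T$ itself is non-singular, so it trivially has a non-singular convolution power. All hypotheses of \cref{thm:regenerative} are satisfied, and the conclusion follows.

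Since the argument reduces entirely to checking the hypotheses of a theorem already proved in the paper, there is no genuine obstacle here; the mild point to be careful about is simply justifying non-explosion and stationarity from the exponential recurrence assumption, which ensures that the renewal point process $\Lambda$ is well-defined as a (stationary) simple point process and is locally finite.
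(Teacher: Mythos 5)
Your proof is correct and takes essentially the same approach as the paper: both fix a reference state $s_0$, take $\Lambda$ to be the return times to $s_0$, verify that $\Lambda$ is a finitarily-regenerative renewal point process with an absolutely continuous jump distribution having exponential tails, and then invoke \cref{thm:regenerative}. The paper states this in a single sentence; your version supplies the routine verifications (strong Markov property for the regenerative structure, RCLL paths for the finitary factor, and the convolution $T = H + E$ with $H$ exponential for absolute continuity) that the paper leaves implicit.
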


We mention that even if the embedded chain is transient, one may still define a (non-minimal) stationary continuous-time Markov chain in such a way that it satisfies the assumptions of \cref{thm:regenerative}. One possible way to do this is to choose the holding rates large enough so that the total time of an excursion from a given state $s$ has exponential tails; such an excursion ends either by returning to $s$ after finitely many transitions, or by an explosion occurring after infinitely many transitions (which do not lead back to $s$) in a finite time period. After every such excursion, another independent excursion from $s$ is immediately started. In this case, the times at which the process enters state $s$ constitute a regenerative renewal point process and \cref{thm:regenerative} is applicable (by considering the one-point compactification of the state space, the sample paths are guaranteed to have left-hand limits almost surely).
In fact, since we do not need the process to be Markov, we could even make the holding rates depend on the step within the excursion. For example, if $(\lambda_n)_n$ is a sequence of positive numbers such that $\sum \frac{1}{\lambda_n} < \infty$, and we let the $n$-th transition within an excursion (if it exists) have holding rate $\lambda_n$, then the total time of a single excursion is at most $\sum E_n$ (regardless of the embedded chain), where the $E_n$ are independent exponential random variables with rate $\lambda_n$. In this case, the return time to $s$ is absolutely continuous with exponential tails, so that \cref{thm:regenerative} shows that such processes are also finitarily isomorphic to Poisson point processes.

\begin{remark}
It was remarked in~\cite{soo2019finitary} that \cref{thm:Soo-Markov} extends to the case of countably many states with the additional assumption that the embedded chain has exponential return times. Indeed, the proof there is based on applying a finitary isomorphism to the embedded chain, appealing to Keane--Smorodinsky's~\cite{keane1979finitary} (finite state-space) or Rudolph's~\cite{rudolph1982mixing} (countable state-space) result for discrete-time Markov chains. On the other hand, our method of proof does not separate the embedded chain from the continuous-time chain, but rather works directly with the continuous-time process. This approach has two advantages: First, it does not rely on previous results for $\Z$-processes and therefore yields a self-contained proof. Second, it allows to get rid of the assumptions that the embedded chain is aperiodic and that the holding rates are uniform.
\end{remark}

Let us now discuss applications to Markov processes on continuous state spaces.
Consider a stationary Markov process $X=(X_t)_{t \in \R}$ taking values in a Polish space $S$ and having paths that are right-continuous with left-hand limits.
A natural way to find a regenerative set $\Lambda$ for $X$ is via stopping times. If $T$ is a stopping time, then for any initial deterministic time $T_0 \in \R$, we can construct a sequence $T_0<T_1<T_2<\cdots$ by $T_{n+1} = T_n + T \circ \theta_{T_n}$, where $\theta_t$ is the translation by $t$ (and $X$ is defined on its canonical space). If the point process $\{T_1,T_2,\dots\}$ stabilizes on any bounded interval as $T_0 \to -\infty$, then we obtain a stationary point process $\Lambda$ in the limit. For \cref{thm:regenerative} to be applicable in this situation, we would still need to know that $\Lambda$ is a finitary factor of $X$, and that it is a renewal point process that is regenerative for $X$. The former will usually be evident from the definition, while the latter may rely on a strong Markov property.

Let us demonstrate this with two examples of interest: reflected and periodic Brownian motions.
One may obtain reflected Brownian motion on $[0,h]$ for some $h \in (0,\infty)$ by taking a standard Brownian motion $B=(B_t)_{t \in \R}$ and defining $X=(X_t)_{t \in \R}$ by letting $X_t$ be the distance from $B_t$ to the nearest integer multiple of $2h$. By choosing the distribution of $B_0$ to be uniform on $[0,h]$, one further ensures that $X$ is stationary. Periodic Brownian motion on $[0,h]$ is obtained similarly by taking $X_t$ to be $B_t$ mod $h$.
Kosloff and Soo~\cite{kosloff2019finitary} proved that two stationary reflected (periodic) Brownian motions on intervals $[0,h_1]$ and $[0,h_2]$ are finitarily isomorphic whenever $h_1/h_2$ is rational, and that a stationary reflected Brownian motion on $[0,h]$ is finitarily isomorphic to a stationary periodic Brownian motion on $[0,2h]$. As we now explain, our results show that any such process (reflected or periodic, and with any $h$) is finitarily isomorphic to any Poisson point process, and hence also to any other such process.

%For example, Ornstein and Shields proved that reflected Brownian motion on a bounded interval is an infinite-entropy flow \note{?}, so that it too is Bernoulli.

Let $X$ be a stationary reflected Brownian motion on $[0,h]$.
Let $T$ be the first hitting time of $0$ after hitting $h$, namely, $T=\inf \{ t>0 : X_t = 0\text{ and }X_{t'}=h\text{ for some }t' \in (0,t)\}$. It is clear that $T$ is a stopping time and that the point process $\Lambda$ obtained from $T$ as above is well-defined. In fact, $\Lambda$ is a finitary factor of $X$, since the restriction of $\Lambda$ to $[a,b]$ is determined by $(X_t)_{R_a \le t \le b}$, where $R_a = \sup\{t<a : X_t=h\}$. Moreover, it is standard that $X$ is a Feller process, and hence enjoys the strong Markov property, thereby implying that $\Lambda$ is regenerative for $X$.
Thus, $\Lambda$ decomposes $X$ into independent excursions from 0 to $h$ and back.
Finally, it is not hard to check that the jump distribution of $\Lambda$ is absolutely continuous with exponential tails. Thus, \cref{thm:regenerative} yields that $X$ is finitarily isomorphic to a Poisson point process. A similar reasoning applies to periodic Brownian motion. This yields the following corollary, which answers Questions~1 and~2 of~\cite{kosloff2019finitary} affirmatively.

\begin{cor}\label{cor:Brownian}
Any stationary reflected or periodic Brownian motion on a bounded interval is finitarily isomorphic to any Poisson point process.
\end{cor}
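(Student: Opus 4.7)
The plan is to reduce both cases to \cref{thm:regenerative} by producing a finitarily-regenerative renewal point process $\Lambda$ whose jump distribution is absolutely continuous with exponential tails (the latter giving a non-singular convolution power via $k=1$).

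For a stationary reflected Brownian motion $X$ on $[0,h]$, I would adopt the stopping time $T=\inf\{t>0:X_t=0\text{ and }X_{t'}=h\text{ for some }t'\in(0,t)\}$ suggested in the paragraph preceding the corollary, and iterate $T_{n+1}=T_n+T\circ\theta_{T_n}$ both forward and backward from an arbitrary $T_0\in\R$. The backward iteration stabilizes on every bounded interval almost surely (since $X$ almost surely visits $h$ arbitrarily far in the past), producing a stationary point process $\Lambda$ measurable with respect to $X$. I would then verify the hypotheses of \cref{thm:regenerative} in turn. Finitariness of the factor: the restriction of $\Lambda$ to $[a,b]$ depends only on $(X_t)_{R_a\le t\le b}$, where $R_a=\sup\{t<a:X_t=h\}$ is almost surely finite. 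Regenerative structure and renewal property: $X$ is a Feller process, so the strong Markov property at each point of $\Lambda$, combined with the fact that $X$ takes the deterministic value $0$ at every such point, yields conditional independence of the excursions with a common distribution depending only on the excursion length; in particular the inter-point distances are i.i.d., so $\Lambda$ is a renewal point process regenerative for $X$. Jump distribution: the holding time decomposes as the independent sum of the hitting time of $h$ from $0$ and the hitting time of $0$ from $h$, both classical absolutely continuous one-dimensional-diffusion hitting distributions.

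For a stationary periodic Brownian motion $X$ on $[0,h]$, the same plan works with a stopping time that forces a full circuit, for instance $T=\inf\{t>0:X_t=0\text{ and }X_{t'}=h/2\text{ for some }t'\in(0,t)\}$; the back-look identity, the strong Markov property, absolute continuity and exponential tails are verified exactly as in the reflected case (now viewing the process on the circle $\R/h\Z$).

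The main obstacle I anticipate is the uniform exponential-tail estimate for the jump distribution; the remaining ingredients (stabilization of the backward iteration, locality of the back-look, strong Markov, and absolute continuity of hitting times for a one-dimensional diffusion) are essentially standard. I would handle the tail bound via a uniform ergodicity argument: there exists $\delta>0$ such that, from every starting point $x\in[0,h]$, the reflected (resp.\ periodic) Brownian motion visits $0$ and $h$ within time $1$ with probability at least $\delta$; iterating this estimate across disjoint unit intervals via the strong Markov property gives $\Pr(T>n)\le(1-\delta)^n$, and hence exponential tails. With this in hand, \cref{thm:regenerative} applies in both settings and the corollary follows.
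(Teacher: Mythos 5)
Your argument is correct and matches the paper's proof essentially step for step: the same stopping time for the reflected case, the same back-look argument via $R_a$ for finitariness, the same Feller/strong-Markov reasoning for the regenerative structure, and the same reduction to \cref{thm:regenerative}. Your explicit uniform-ergodicity tail bound and the choice of $h/2$ as the distinguished level in the periodic case simply fill in details that the paper states without proof or leaves implicit.
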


The construction described above, in which reflected Brownian motion is decomposed into independent excursions from 0 to $h$ and back, appeared in~\cite{kosloff2019finitary}, where it was used to obtain the aforementioned results regarding finitary isomorphisms between reflected/periodic Brownian motions.
The renewal point process $\Lambda$, defined above, records the times at which an excursion from 0 to $h$ and back begins. The closely-related renewal point process $\Lambda'$, in which one additionally records the times at which each such excursion first reaches $h$, was called a Brownian excursion point process (of parameter~$h$) in~\cite{kosloff2019finitary}, where its alternating version played an important role. \cref{thm:RP} and \cref{cor:alternating} imply that any (alternating or unmarked) Brownian excursion point process is finitarily isomorphic to a Poisson point process. This gives an affirmative answer to Question~3 of~\cite{kosloff2019finitary}.

We mention that different stopping times could be used instead of the above $T$ in order to obtain \cref{cor:Brownian}. One such example is the first hitting time of $0$ after an initial delay of time $t_0>0$, namely, $T' = \inf \{t \ge t_0 : X_t=0\}$. Indeed, the renewal point process it yields is regenerative for the same reason as before, and is a finitary factor of $X$ since there are intervals of length $t_0$ in which $X$ does not vanish.

It is clear that the above type of reasoning may be applied to various processes. We give one last example of such a process whose paths are not continuous and which is not in a fixed state at the regeneration times. Let $\Lambda$ be a stationary renewal point process whose jump distribution $T$ has exponential tails and a non-singular convolution power. For each interval $[a,b)$ of consecutive points of $\Lambda$, let $(X_t)_{a \le t < b}$ be an independent Brownian motion (independent of those of other intervals and independent of $\Lambda$) with $X_a$ chosen according to some fixed distribution~$I$. Since $\Lambda$ coincides with the set of discontinuity points of $X$, and since it is regenerative by construction, \cref{thm:regenerative} shows that $X$ is finitarily isomorphic to a Poisson point process.
Note that, even when $T$ is exponential (in which case $X$ is a strong Markov process), not all stopping times give rise to regenerative sets for $X$; for example, the set of discontinuity points in which the jump discontinuity has size at most 1 is not regenerative when $I$ is non-constant.
On the other hand, it was not important that the segments $(X_t)_{a \le t < b}$ were Brownian motion, and we could have chosen $(X_{a+t})_{0 \le t < b-a}$ according to any other continuous process in which the distribution at any positive time is continuous.

\begin{remark}
Going through the proof of \cref{thm:RP2}, it is straightforward to check that the finitary isomorphism guaranteed by the theorem is very efficient in that its coding window has exponential tails (as does the coding window of its inverse). In particular, this also means that such a finitary isomorphism exists between any two Poisson point processes.
The same is true for \cref{thm:simply-marked-RP}, as well as for \cref{thm:regenerative} if one assumes that the underlying renewal point process $\Lambda$ is not just a finitary factor of $X$, but that its coding window also has exponential tails. All applications of \cref{thm:regenerative} mentioned in this section satisfy this.
\end{remark}

\section{Preliminary definitions and background}
\label{sec:preliminary}

\subsection{Point processes}

Let $\M$ denote the space of locally-finite simple-point Borel measures on~$\R$. Every $\mu \in \M$ is a countable (or finite) sum of Dirac point masses with no multiplicity, and can thus be identified with a countable (or finite) set of isolated points $[\mu]=\{p_i\}_i \subset \R$.
Let $\Theta=(\theta_t)_{t \in \R}$ denote the group of translations of $\R$, where $\theta_t \colon \R \to \R$ is given by $\theta_t(s)=s-t$. This group acts on $\M$ by $\theta_t(\mu) = \mu \circ \theta_{-t}$, or equivalently, by $\theta_t(\{p_i\}_i) = \{p_i-t\}_i$.
A (simple) \textbf{point process} on $\R$ is a random variable $\Lambda$ that takes values in $\M$. The point process $\Lambda$ is \textbf{stationary} if its distribution is preserved under the action of $\Theta$.
A stationary point process $\Lambda$ can then be seen as a measure-preserving system $(\M,\Pr(\Lambda \in \cdot),\Theta)$.

Let $\Lambda$ be a point process and suppose that its points $[\Lambda]=\{p_i\}_i$ are indexed by an interval in $\Z$ so that $\cdots < p_{-1}< p_0 \le 0 < p_1 < p_2 < \cdots$. When $[\Lambda]$ is almost surely unbounded above, $p_i$ is a well-defined random variable for every $i \ge 1$, with $p_i \to \infty$ as $i \to \infty$ almost surely. Similarly, when $[\Lambda]$ is almost surely unbounded below, $p_i$ is a well-defined for every $i \le 0$, with $p_i \to -\infty$ as $i \to -\infty$ almost surely.
A point process is a \textbf{renewal point process} if $\{p_{i+1}-p_i\}_{i=1}^\infty$ are \iid\ random variables. The common law of these variables is called the \textbf{jump distribution}.
In a stationary renewal point process, $\{p_{i+1}-p_i\}_{i \neq 0}$ are \iid\ random variables having the same distribution as the jump distribution, whereas $p_1-p_0$ has a size-biased distribution. The most notable example of a stationary renewal point process is the \textbf{Poisson point process} of intensity $\lambda>0$, obtained when the jump distribution is the exponential distribution with rate $\lambda$. 

A \textbf{marked point process} is a point process $\Pi$ on $\R \times S$ for some Polish space $S$, i.e., a random variable taking values in the space of Borel locally-finite simple-point measures on $\R \times S$.
As before, $\Pi$ can be identified with a countable set of isolated points $[\Pi]=\{(p_i,m_i)\}_i \subset \R \times S$.
The translation group $\Theta$ acts on marked point processes by acting on the first coordinate, i.e., $\theta_t(\{(p_i,m_i)\}_i) = \{(p_i-t,m_i)\}_i$.
We will be interested only in marked point processes whose projection onto $\R$ is a point process on $\R$ (i.e., the projection does not create overlapping points or accumulation points) and we implicitly assume this in the definition of a marked point process. Such a marked point process can be thought of as a point process on $\R$ in which every point carries a value in $S$ called its \textbf{mark}.
We say that $[\Lambda]=\{p_i\}_i$ is the unmarked point process underlying $\Pi$, and that the point $p_i$ has mark $m_i$.
Suppose now that $[\Lambda]$ is almost surely unbounded above and below, and let $\{p_i\}_i$ be indexed as before. The discrete-time process $(m_i)_{i \in \Z} \in S^\Z$ called the \textbf{skeleton} of $\Pi$.
We say that a marked point process $\Pi$ is \textbf{simply-marked} if, given $\Lambda$, the marks $(m_i)_i$ are conditionally independent, with the distribution of each $m_i$ depending on $\Lambda$ only through $p_i-p_{i-1}$.
If the latter distribution does not at all depend on $\Lambda$, then $(m_i)_i$ are \iid\ and independent of $\Lambda$, and we say that the point process is \textbf{independently-IID-marked}.
We stress that by a marked renewal point process, we mean that the underlying unmarked point process is a renewal point process. Note that a stationary marked point process is a simply-marked renewal point process if and only if $\{(p_{i+1}-p_i,m_{i+1})\}_{i=1}^\infty$ are \iid\ random variables. The common law of these variables is called the \textbf{jump-mark distribution}. Note also that a simply-marked renewal point process is independently-IID-marked if and only if the jump-mark distribution is a product distribution.
We say that a marked point process $\Pi$ is a \textbf{Markov-marked point process} if, for any $i \ge 1$, the conditional distribution of $(p_{i+1},m_{i+1})$ given $\{(p_j,m_j)\}_{j \le i}$ depends only on $m_i$. Note that a stationary simply-marked renewal point process is precisely a Markov-marked point process in which the latter conditional distribution does not depend on $m_i$.

When we use the term \emph{jump distribution} outside the context of a renewal point process, we simply mean a random variable taking values in $(0,\infty)$.

\subsection{Continuous-time processes}

Besides point processes, we also consider continuous-time processes $X=(X_t)_{t \in \R}$ taking values in a Polish space $S$. We always assume these processes to have paths which are almost surely right-continuous with left-hand limits. Thus, $X$ is a random variable taking values in the space $D$ of functions $f \colon \R \to S$ which are right-continuous with left-hand limits. The space $D$ is endowed with the Skorokhod topology which turns it into a Polish space. The group of translations $\Theta$ acts on $D$ by $\theta_t(f) = f \circ \theta_{-t}$. The process $X$ is stationary if its distribution is preserved under the action of $\Theta$. A stationary process $X$ can then be viewed as a measure-preserving system $(D,\Pr(X \in \cdot),\Theta)$.

We note that a marked point process $\Pi$ can be seen as continuous-time process $X$ whose paths are right-continuous with left-hand limits. Suppose that the marks of $\Pi$ take values in a Polish space $S$.
If consecutive points of $\Pi$ always have different marks, then we can simply take $X$ to be the $S$-valued process defined by $X_t = s$, where $s$ is the mark of the largest point of $\Pi$ in $(-\infty,t]$. If consecutive points may have identical marks (which is always the case for an unmarked point process), then we can take $X$ to be the $(S \times [0,\infty))$-valued process defined by $X_t = (s,t-t')$, where $t'$ is the largest point of $\Pi$ in $(-\infty,t]$ and $s$ is its mark. In this manner, results about continuous-time processes can also be applied to marked point processes.

\subsection{Factors and isomorphisms}

We begin by defining the notion of a factor from one measure-preserving system $\mathsf M = (\mathcal M,\mu,M)$ to another $\mathsf N = (\mathcal N,\nu,N)$, where both are given by a single transformation. A \textbf{factor} from $\mathsf M$ to $\mathsf N$ is a measurable map $\varphi \colon \mathcal M \to \mathcal N$ such that $\mu \circ \varphi^{-1} = \nu$ and $\varphi \circ M = N \circ \varphi$ on a set of full $\mu$-measure.
We say that $\mathsf N$ is a factor of $\mathsf N$. If $\varphi$ is injective with a measurable inverse, then $\varphi$ is an \textbf{isomorphism} from $\mathsf M$ to $\mathsf N$ and we say that $X$ and $Y$ are isomorphic.

The definitions naturally extend to measure-preserving flows. For example, if $X$ and $Y$ are two stationary point processes, then we identify them with the measure-preserving flows $(\M,\Pr(X \in \cdot),\Theta)$ and $(\M,\Pr(Y \in \cdot),\Theta)$, and a factor from $X$ to $Y$ is a measurable map $\varphi \colon \M \to \M$ such that $\varphi(X)$ and $Y$ are equal in distribution and, on a set of full measure, we have $\varphi \circ \theta_t = \theta_t \circ \varphi$ for all $t \in \R$.

Given a stationary discrete-time process $X=(X_n)_{n \in \Z}$ taking values in some measurable space $A$, we identify $X$ with the measure-preserving system $(A^\Z, \Pr(X \in \cdot), \sigma)$, where $\sigma$ is the left-shift on $A^\Z$.

\subsection{Finitary factors and isomorphisms}

We begin by defining the notion of a finitary factor from one stationary point process to another.
For $\mu \in \M$ and an interval $I \subset \R$, we denote the restriction of $\mu$ to $I$ by $\mu|_I=\mu(\cdot \cap I)$. Recalling that $\mu$ may be viewed as a countable subset $[\mu]$ of $\R$, we may also view $\mu|_I$ as the set $[\mu] \cap I$.
We say that $\mu$ and $\mu'$ agree on $I$ if $\mu|_I=\mu'_I$.

Let $X$ and $Y$ be two stationary point processes.
Intuitively, a factor $\varphi \colon \M \to \M$ from $X$ to $Y$ is \textbf{finitary} if the restriction of $\varphi(X)$ to a bounded interval is almost surely determined by the restriction of $X$ to some, perhaps much larger, bounded interval. To be more precise, a \textbf{coding window} for $\varphi$ is a measurable function $w \colon \M \to \N \cup \{\infty\}$ such that $\varphi(\mu)$ and $\varphi(\mu')$ agree on $[-1,1]$ whenever $\mu$ and $\mu'$ agree on $[-w(\mu),w(\mu)]$. Then $\varphi$ is finitary if there exists a coding window $w$ such that $w(X)$ is almost surely finite. If $\varphi$ is an isomorphism such that both $\varphi$ and $\varphi^{-1}$ are finitary, then $\varphi$ is a \textbf{finitary isomorphism}. In this case, we say that $X$ and $Y$ are finitarily isomorphic and write $X \fiso Y$. We note that in the literature, a finitary isomorphism is sometimes defined to be an isomorphism which is finitary; we require that its inverse is also finitary.

We are also interested in finitary factors and isomorphisms involving other processes. These are defined similarly, with the corresponding notion of agreeing on an interval, e.g., two marked point processes agree on an interval if the points and their marks coincide on that interval, and two processes taking values in a Polish space agree on an interval if their restrictions to that interval coincide.
The notion of finitary factor is similarly defined for discrete-time processes.

\subsection{Background}

Let us begin the discussion with discrete-time processes.
A \textbf{Bernoulli shift} is the discrete-time measure-preserving system associated to an \iid\ process $(Y_n)_{n \in \Z}$ taking values in a finite or countable set. The entropy of a Bernoulli shift is given by the Shannon entropy of $Y_0$. Ornstein proved that any two Bernoulli shifts of equal entropy are isomorphic~\cite{ornstein1970bernoulli,ornstein1970two}. A measure-preserving system consisting of a single transformation is called \textbf{Bernoulli} if it is isomorphic to a Bernoulli shift. Since the Kolmogorov--Sinai entropy of a measure-preserving system is an isomorphism-invariant~\cite{kolmogorov1959entropy,sinai1959concept}, two Bernoulli systems are isomorphic if and only if they have the same entropy.
Furthermore, Ornstein theory provides various criteria by which one may check whether a given system is Bernoulli. Consequently, many systems are known to be Bernoulli, including stationary finite-state mixing Markov chains~\cite{friedman1970isomorphism} and any factor of a Bernoulli system~\cite{ornstein1970factors}.

The analogous theory has also been developed for flows.
A measure-preserving flow $(\mathcal M,\mu,(M_t)_{t \in \R})$ is called Bernoulli if the discrete-time measure-preserving system $(\mathcal M,\mu,(M_{nt})_{n \in \Z})$ is Bernoulli for any $t>0$. The entropy of a measure-preserving flow is the Kolmogorov--Sinai entropy of its time-one map $(\mathcal M,\mu,(M_n)_{n \in \Z})$.
Ornstein proved that any two Bernoulli flows of equal entropy are isomorphic~\cite{ornstein1973isomorphism,ornstein2013newton}. An example of a finite-entropy Bernoulli flow is a stationary renewal point process whose jump distribution takes values 1 and $\sqrt{2}$ with equal probabilities (this is a factor of the corresponding alternating point process, which is equivalent to the Totoki flow~\cite{ornstein1970imbedding}). A canonical example of an infinite-entropy Bernoulli flow is a Poisson point process. Other infinite-entropy Bernoulli flows include stationary irreducible continuous-time Markov chains on finite state-spaces~\cite{feldman1971bernoulli}, and more generally, stationary mixing Markov shifts of kernel type~\cite{ornstein1973mixing}, which include reflected Brownian motion on a bounded interval.

Let us now discuss the finitary counterpart of the theory. We once again begin with discrete-time processes.
In a series of landmark papers~\cite{keane1977class,keane1979bernoulli,keane1979finitary}, Keane and Smorodinsky proved the finitary counterpart of Ornstein's isomorphism theorem for Bernoulli shifts. Namely, they showed that any two \iid\ processes of equal entropy are finitarily isomorphic~\cite{keane1979bernoulli}, and more generally, that any two stationary finite-state mixing Markov chains are finitarily isomorphic~\cite{keane1979finitary}.
This has provided hope of finding finitary counterparts for other parts of Ornstein theory. Indeed, many such counterparts have been found. For example, Rudolph gave an abstract criteria for being finitarily isomorphic to an \iid\ process~\cite{rudolph1981characterization} and applied it to show that stationary countable-state exponentially-recurrent mixing Markov chains are finitarily isomorphic to an \iid\ process~\cite{rudolph1982mixing}, and Shea proved the existence of finitary isomorphisms between some renewal processes and \iid\ processes~\cite{shea2009finitary}.
We stress that there are plenty of other finitary results, of which we have only mentioned a few, and we refer the reader to the survey~\cite{serafin2006finitary} for more background.
Surprisingly, however, it has recently been shown that various parts of the theory, though initially expected to have finitary counterparts, in fact fail to~\cite{gabor2019failure}. This accentuates the fact that it is not always a simple matter to decide whether a finitary counterpart exists, and thereby also adds some excitement to the matter.

For continuous-time processes, finitary results are rather scarce. The most notable results are \cref{thm:PPP} about finitary isomorphisms between two Poisson point processes (building on a previous result of Holroyd--Lyons--Soo~\cite{holroyd2011poisson} and extending a result of Kalikow--Weiss~\cite{kalikow1992explicit}) and \cref{thm:Soo-Markov} about finitary isomorphisms between Poisson point processes and a class of continuous-time Markov chains. Further results include finitary isomorphisms of reflected/periodic Brownian motions on bounded intervals~\cite{kosloff2019finitary} and monotone finitary factors between Poisson point processes~\cite{ball2005poisson}. Our results add to the short list of finitary results for continuous-time processes.

Let us mention that for discrete-time processes taking values in a finite or countable set $A$, having a finite coding window at some point $x \in A^\Z$ is the same as the continuity of the factor map at~$x$. Thus, for such processes, our notion of a finitary factor is the same as an almost everywhere continuous map.
For other processes (either discrete-time processes taking values in an uncountable set, continuous-time processes or point processes), the two notions do not coincide, and we take as the definition the existence of an almost everywhere finite coding window. Note that this is a rather strong requirement (and is therefore sometimes termed \emph{strongly finitary}~\cite{holroyd2011poisson}) as it means that the finitary factor allows to determine the precise output on any bounded interval (i.e., the exact location of the points in a point process, or the exact sample path in a continuous-time process) by observing the input on a large, but bounded, window.

Let us also mention the possibility of obtaining finitary factors which commute with all isometries of the line, i.e., not only translations, but also reflections. For example, the finitary isomorphism guaranteed by \cref{thm:PPP} has this property. Our constructions produce finitary isomorphisms which are translation-equivariant, but not reflection-equivariant. It would be interesting to determine when this is possible.

\begin{quest}
Can the finitary isomorphism in \cref{thm:RP} be made reflection-equivariant?
\end{quest}

\section{Renewal point processes and their simply-marked versions}
\label{sec:simply-marked}

In this section, we provide some results relating certain renewal point processes to their simply-marked versions and to associated finitarily-regenerative processes.
Recall that an independently-IID-marked point process is obtained from an unmarked point process by independently marking each point according to some common law on marks.
The following lemma will allow us to move freely between unmarked and independently-IID-marked renewal point processes, at least when the jump distribution is non-singular.
%We note that, since we are not trying to compare a given renewal point process to a Poisson point process, there is no assumption on the tail

% remark that this part is not about PPPs; in particular, does not require exp tails. It still needs needs some non-singularity assumption

\begin{lemma}\label{lem:marking}
Let $X$ be a stationary renewal point process with a non-singular jump distribution $T$. Then $X$ is finitarily isomorphic to any of its independently-IID-marked versions.
\end{lemma}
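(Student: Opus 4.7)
The plan is to construct an explicit finitary isomorphism $\Phi$ by exploiting the absolutely continuous component of the jump distribution as a source of extra randomness, using it to generate the marks while packing the leftover information back into the gap so as to preserve invertibility.

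Write $\mu$ for the law of $T$ and fix a Borel set $A\subset(0,\infty)$ with $\alpha:=\mu(A)>0$ on which $\mu|_A$ is absolutely continuous (any Borel subset of positive measure inside the support of the AC component of $\mu$ will do); such an $A$ exists by non-singularity. Set $\mu_A:=\mu|_A/\alpha$. Since $(A,\mu_A)$ is an atomless standard probability space, fix a Borel isomorphism $h\colon A\to[0,1]$ with $h_*\mu_A$ equal to Lebesgue measure. Likewise, $([0,1]\times S,\text{Leb}\otimes\nu)$ is atomless and standard since Lebesgue measure has no atoms, so fix a Borel isomorphism $\xi\colon[0,1]\to[0,1]\times S$ sending Lebesgue measure to $\text{Leb}\otimes\nu$. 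Iterating $\xi$ yields, for each positive integer $L$, a Borel isomorphism $\xi^{(L)}\colon[0,1]\to[0,1]\times S^L$ taking a uniform input to an independent pair consisting of a residual uniform and $L$ \iid\ $\nu$-distributed marks.

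Given $X$ with gaps $(T_i)_{i\in\Z}$, call position $i$ good when $T_i\in A$ and enumerate the good positions as $(g_n)_{n\in\Z}$. The block lengths $L_n:=g_{n+1}-g_n$ are \iid\ geometric with parameter $\alpha$, hence a.s.\ finite. For each block $n$, set $U_{g_n}:=h(T_{g_n})$ and compute $\xi^{(L_n)}(U_{g_n})=(V_n,M_{g_n},M_{g_n+1},\dots,M_{g_n+L_n-1})$. Define $\Phi(X)$ to be the marked point process whose gaps and marks are: $T'_{g_n}:=h^{-1}(V_n)$ and $M_{g_n}$ as above at good positions, and $T'_j:=T_j$ and $M_j$ as above at bad positions.

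To verify that $\Phi(X)$ is distributed as an independently-IID-marked version of $X$, one argues block-by-block: the per-block map $T_{g_n}\mapsto(T'_{g_n},M_{g_n},\dots,M_{g_n+L_n-1})$ is, after conjugation by $h$, the Borel isomorphism $\xi^{(L_n)}$, and hence pushes $\mu_A$ forward to $\mu_A\otimes\nu^{L_n}$; bad gaps retain their distribution $\mu|_{A^c}/(1-\alpha)$; and different blocks use independent input data. Combining these observations gives \iid\ gaps with law $\mu$ together with \iid\ marks with law $\nu$ independent of the gaps. Finitariness of $\Phi$ is clear since each output near the origin depends only on $T_j$'s within the a.s.\ finite block containing it; for $\Phi^{-1}$, one reads off $\1[T'_j\in A]$ from $\Phi(X)$ to recover the block structure, then inverts $\xi^{(L_n)}$ on $(h(T'_{g_n}),M_{g_n},\dots,M_{g_n+L_n-1})$ to obtain $U_{g_n}$ and hence $T_{g_n}=h^{-1}(U_{g_n})$. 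The main conceptual obstacle is ensuring bijectivity of the per-block map, since a naive scheme that simply emits $L_n$ marks from $U_{g_n}$ would discard the residual uniform---fatal when $\nu$ has atoms, as in that case the $L_n$ marks carry only finite information per block and cannot encode a uniform; storing the residual back in the gap via $h^{-1}$ both preserves the correct marginal distribution on the gap and makes the per-block map a genuine Borel isomorphism.
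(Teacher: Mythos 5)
Your plan has the right information-theoretic idea (pack the leftover randomness of an absolutely continuous gap into the gap itself while extracting \iid\ marks, so the per-block map is a Borel isomorphism), but the construction as stated is not translation-equivariant, and this is exactly the difficulty the paper's proof is engineered to avoid. You define $\Phi$ on the gap sequence $(T_i)_{i\in\Z}$ and replace $T_{g_n}$ by a different value $T'_{g_n}=h^{-1}(V_n)$ at every good index. But a stationary point process on $\R$ is not the same object as a shift-invariant law on gap sequences: to output a point process you must place the new points somewhere, and there is no $\Theta$-equivariant way to do so once you have changed the lengths of (infinitely many of) the gaps. If you anchor at the point just left of the origin, then as you translate $X$ past a point the indexing shifts by one, the gap sequence fed to $\Phi$ shifts by one, but the anchor moves to $p_1$ while equivariance would require it to move to $p'_1=p_0+T'_0\neq p_1$; so $\Phi(\theta_t X)\neq\theta_t\Phi(X)$. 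More generally, changing $T_{g_n}$ shifts every later point by $T'_{g_n}-T_{g_n}$, and these shifts accumulate like a two-sided random walk, so no local anchoring rule is consistent across translations. (For a $\Z$-process there is no anchoring problem, which may be why the gap is easy to overlook.)

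The paper sidesteps this by never changing an interval length: it only moves a single point within a fixed interval. Concretely, it defines special points via the three-gap pattern $(A,A,A^c)$, and within each interval $(x',x]$ of consecutive special points it conditions on everything except the position of the one point $x_1$ lying between the two $A$-gaps at $x$. The conditional law of $x-x_1$ given $x-x_2=\ell$ is absolutely continuous (this is exactly what \cref{lem:continuous-cond-distrib} provides, and it is why the paper conditions on a \emph{sum} of two $A$-gaps rather than resampling one $A$-gap outright as you do), and the Borel isomorphism $\phi_{\ell,k}$ is applied to the splitting variable $x-x_1$, producing a resampled splitting $d$ plus $k$ marks. Since $x$, $x_2$, and every other point are left untouched, the special points of the output coincide with those of the input, equivariance is automatic, and finitariness follows because each special-point interval is a.s.\ bounded. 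To repair your proof you would need to similarly resample a conditional splitting within a fixed window rather than the gap itself, which is essentially the paper's construction; as written, your $\Phi$ is not a factor map between the two $\R$-flows.
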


Before proving \cref{lem:marking}, we require the following simple lemma.

\begin{lemma}\label{lem:continuous-cond-distrib}
Let $T_1$ and $T_2$ be independent absolutely continuous jump distributions. Then, given $T_1+T_2$, the conditional distribution of $T_1$ is almost surely absolutely continuous.
\end{lemma}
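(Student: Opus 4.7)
The plan is a direct computation with densities, essentially just a rigorous version of the elementary formula for the conditional density of $T_1$ given the sum.

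Let $f_1$ and $f_2$ denote the densities of $T_1$ and $T_2$ with respect to Lebesgue measure on $(0,\infty)$. I would first change variables from $(T_1,T_2)$ to $(T_1,S)$ with $S:=T_1+T_2$; since this map has unit Jacobian, the pair $(T_1,S)$ has joint density
\[
g(s,u) \;=\; f_1(s)\,f_2(u-s)\,\mathbf{1}_{0<s<u}
\]
on $\R^2$. Integrating out $s$, the marginal density of $S$ is the convolution $f_S(u) = \int_0^u f_1(s)f_2(u-s)\,ds$.

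Next, I would read off the conditional density directly: on the set $\{u : f_S(u)>0\}$,
\[
f_{T_1\mid S=u}(s) \;=\; \frac{f_1(s)\,f_2(u-s)\,\mathbf{1}_{0<s<u}}{f_S(u)},
\]
which defines, for each such $u$, a bona fide probability density in the variable $s$. Hence, for every $u$ outside a Lebesgue-null set inside the support of $S$, the conditional law of $T_1$ given $S=u$ is absolutely continuous with respect to Lebesgue measure.

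The only thing left to verify is that this Lebesgue-null exceptional set is in fact null with respect to the distribution of $S$, so that the conclusion holds almost surely. Since $S$ itself is absolutely continuous with density $f_S$, this is immediate: $\Pr(f_S(S)=0)=\int_{\{f_S=0\}} f_S(u)\,du = 0$. Combining this with the explicit formula above gives the lemma. There is no real obstacle here; the only thing to be careful about is the almost-sure quantifier, handled by the last observation.
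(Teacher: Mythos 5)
Your proposal is correct and takes essentially the same route as the paper: both compute the conditional density of $T_1$ given $S=T_1+T_2$ as $f_1(s)f_2(u-s)/f_S(u)$, with $f_S$ the convolution of the two densities. The only difference is that you spell out the change of variables and the observation that $\{f_S(S)=0\}$ has probability zero, details the paper compresses into ``it is straightforward that.''
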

\begin{proof}
Let $g$ and $h$ be the densities of $T_1$ and $T_2$, respectively. Then $T_1+T_2$ is absolutely continuous with density $f=g*h$ given by convolution of $g$ and $h$.
Thus, given $T_1+T_2=t$, it is straightforward that $T_1$ has conditional density $g_t$ given by $g_t(s)=g(s)h(t-s)/f(t)$.
\end{proof}

%\begin{lemma}\label{lem:continuous-cond-distrib}
%Let $T$ be a non-singular jump distribution. Then there exists a measurable and bounded set $A \subset \R$ such that $0<\Pr(T \in A)<1$ and the following holds. Let $T_1$ and $T_2$ be two independent random variables having the conditional distribution of $T$ given that $T \in A$. Then $T_1+T_2$ is absolutely continuous with a continuous and bounded density. Moreover, given $T_1+T_2$, the conditional distribution of $T_1$ is absolutely continuous with a bounded density.
%\end{lemma}
%\begin{proof}
%Let $B \subset \R$ be a Lebesgue-null set supporting the singular part of $T$.
%Let $g$ be the density of the absolutely continuous part of $T$. By assumption, $\Pr(T \notin B) = \int_\R g > 0$.
%Let $d>0$ be such that both $\{g>0\} \cap (0,d)$ and $\{g>0\} \cap (d,\infty)$ have positive Lebesgue measure.
%Let $\lambda>0$ be such that $\{ 0<g \le \lambda \} \cap (0,d)$ has positive Lebesgue measure.
%Set
%\[ A := \{ g \le \lambda \} \cap (0,d) \setminus B \]
%and note $\Pr(T \in A) = \int_A g \in (0,1)$.
%Since $T_1$ is absolutely continuous with a bounded density $\bar g = g \1_A / \int_A g$, it follows that $T_1+T_2$ is absolutely continuous with a continuous and bounded density $f$. It remains to prove that, given $T_1+T_2$, the conditional distribution of $T_1$ is absolutely continuous. Indeed, given $T_1+T_2=t$, it is straightforward that $T_1$ has conditional density $h_t$ given by $h_t(s)=\bar{g}(s)\bar{g}(t-s)/f(t)$.
%\end{proof}

\begin{proof}[Proof of \cref{lem:marking}]
Since $T$ is non-singular, there exists a measurable set $A \subset \R$ such that $0<\Pr(T\in A)<1$ and such that $\Pr(T \in \cdot \mid T \in A)$ is absolutely continuous.
Let $x$ be a point of $X$, and let $x_3<x_2<x_1<x$ denote the three nearest points to its left.
Call the point $x$ special if $x-x_1 \in A$, $x_1-x_2 \in A$ and $x_2-x_3 \notin A$. Clearly, if $x$ is special, then $x_1$ and $x_2$ cannot be special (while $x_3$ can be). Note that special points exist almost surely, and that the special point process is a finitary factor of $X$. Furthermore, since $X$ is a renewal point process, the special points are regenerative for $X$. That is, given the set of special points, the restrictions of $X$ to the intervals between consecutive special points are conditionally independent, with the conditional distribution on any such interval depending only on the length of the interval (by an interval between two consecutive points $a$ and $b$, we mean the half-open half-closed interval $(a,b]$, so that such intervals partition the line).

Let $X^+$ be an independently-IID-marked version of $X$. The idea now is to independently resample the points of $X$ in any such interval as above, while harnessing the internal randomness to simultaneously generate, in a bijective manner, an independent mark for each point. As this operation is finitary and preserves the special points, it will yield a finitary isomorphism from $X$ to $X^+$. We proceed to make this precise.

Let $(x',x]$ be an interval of consecutive special points, i.e., $x'<x$ are two special points having no other special point in between. We condition on the number of points in the interval and their locations, except for that of $x_1$. That is, we condition on $X \cap (x',x] \setminus \{x_1\}$, or equivalently, on $X \cap (x',x_2]$. 
Observe that, given this, the conditional distribution of $x-x_1$ (and hence of $x_1-x_2$) depends only on $\ell=x-x_2$.
In fact, this conditional distribution is simply the conditional distribution of $S$ given that $S+S'=\ell$, where $S$ and $S'$ are independent random variables with distribution $\Pr(T \in \cdot \mid T \in A)$, so that it is absolutely continuous by \cref{lem:continuous-cond-distrib}.
Let $D_\ell$ be a random variable with this distribution. 
Then for every $\ell>0$ and integer $k \ge 0$, since $D_\ell$ is continuous, the isomorphism theorem for probability spaces~\cite[Theorem~3.4.23]{srivastava2008course} yields the existence of a Borel isomorphism (a bimeasurable bijection) $\phi_{\ell,k}$ such that
\[ \phi_{\ell,k}(D_\ell) \eqd (D_\ell, U_1,\dots,U_k) ,\]
where $U_1,\dots,U_k$ are independent random variables, independent also of $D_\ell$, whose distribution is that of a single mark of $X^+$.
Finally, conditioning on the set of special points, and for each such interval $(x',x]$ of consecutive special points, also on the restriction of $X$ to $(x',x_2]$, letting $k$ denote the number of points in $(x',x]$ (which is always two more than the number of points in $(x',x_2]$), we can apply $\phi_{\ell,k}(x-x_1)=(d,u_1,\dots,u_k)$ to obtain the resampled point $x-d$ in place of $x_1$, along with $k$ independent marks which we may assign to the points in $(x',x]$ according to their order (e.g., $u_1$ is assigned to $x$, $u_2$ to $x_1$, and so forth). It is straightforward that this yields a marked point process with the same law as $X^+$, and that this mapping is finitary.
Finally, since the resampling of $x_1$ does not change the special points, and since each $\phi_{\ell,k}$ is a bijection, this mapping is easily seen to be invertible with a finitary inverse, thereby establishing the lemma.
\end{proof}

We remark that while the above resampling procedure preserved the number of points in each interval of consecutive special points (in fact, it preserved much more than this), this is not at all essential; one could just as well resample the entire process in each such interval.

\begin{remark}
Our proof of \cref{lem:marking} involves variations of some ideas from~\cite{ball2005poisson,kosloff2019finitary,soo2019finitary}.
The idea of resampling segments of the point process in order to generate independent labels for the points was used by Kosloff--Soo~\cite[Lemma~5]{kosloff2019finitary} to show that a stationary independently-colored renewal point process whose jump distribution is absolutely continuous and whose skeleton is an irreducible (non-constant) Markov chain is finitarily isomorphic to any of its independently-IID-marked versions (where the points are both colored and marked).
Their construction made use of the colors in the original point process in order to find special points which split the process into independent segments (and hence required the skeleton to be non-constant). On the other hand, Soo proved the analogue for unmarked Poisson point processes~\cite[Proposition~4]{soo2019finitary}, showing that any such process is finitarily isomorphic to any of its independently-IID-marked versions. Soo's construction relied on a sophisticated tool developed by Holroyd--Lyons--Soo~\cite{holroyd2017finitary} for Poisson point processes on $\R^d$ (for arbitrary $d \ge 1$). 
Our construction follows a more direct route, closer to that of~\cite{kosloff2019finitary}, with the main difference being in the definition of the special points.
\end{remark}

We shall require a generalization of \cref{lem:marking}, allowing simply-marked versions rather than just independently-IID-marked versions of renewal point processes. At the same time, we also relax the assumption that the jump distribution is non-singular (while we do not require this relaxation for our main results, we feel that it shines some light on the difficulties in working with singular jump distributions). Indeed, this assumption was only used when appealing to \cref{lem:continuous-cond-distrib}, and the only part of the conclusion of the lemma that was used is that the conditional distribution of $T_1$ given $T_1+T_2$ is continuous. We note that the analogue of \cref{lem:continuous-cond-distrib} for continuous jump distributions is not true. In fact, there exist independent continuous jump distributions $T_1$ and $T_2$ such that $T_1$ and $T_2$ are measurable functions of $T_1+T_2$. For example, construct $T_1$ by taking a uniform random variable in $[0,1]$ and setting its even-indexed bits to 0 in its binary expansion, and similarly construct $T_2$ with the odd-indexed bits set to 0. One can also construct similar situations in which $T_1$ and $T_2$ have the same distribution and the unordered pair $\{T_1,T_2\}$ is a measurable function of $T_1+T_2$. This motivates the following definition.

Say that a jump distribution $T$ is \textbf{continuously-divisible} if there exists an integer $n \ge 1$ and a measurable set $B \subset \R^n$ such that, letting $T_1,T_2,\dots$ be independent copies of $T$, we have that $\Pr((T_1,\dots,T_n) \in B)>0$ and, given $T_1+\cdots+T_n$ and that $(T_1,\dots,T_n) \in B$, the conditional distribution of $(T_1,\dots,T_n)$ is almost surely continuous, and moreover, $B$ is non-overlapping in the sense that $\Pr((T_1,\dots,T_n) \in B, (T_{k+1},\dots,T_{k+n}) \in B)=0$ for all $1 \le k \le n-1$. Note that taking $B=A\times A \times A^c$ with $A$ as in the first line of the proof of \cref{lem:marking}, \cref{lem:continuous-cond-distrib} shows that any non-singular jump distribution is continuously-divisible.

%Furthermore, at the same time, one could generate independent random variables whose distributions depend on the mutual distances between the resampled points. These two observations allow us to extend the above lemma.

\begin{lemma}\label{lem:marking2}
Let $X$ be a stationary renewal point process whose jump distribution is continuously-divisible. Then $X$ is finitarily isomorphic to any of its simply-marked versions.
\end{lemma}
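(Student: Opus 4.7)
The plan is to adapt the proof of \cref{lem:marking}, making two modifications: (i) generalize the ``special point'' construction from the non-singular setting to the continuously-divisible setting, using the set $B \subset \R^n$ from the definition; and (ii) adjust the resampling step so that it produces marks with simply-marked distributions, whose laws depend on the preceding gaps rather than being i.i.d.

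First, I would call a point $x$ of $X$ \emph{special} if $(x - x_1, x_1 - x_2, \ldots, x_{n-1} - x_n) \in B$ and, as an isolation condition, $(x_n - x_{n+1}, \ldots, x_{2n-1} - x_{2n}) \notin B$. The non-overlap property of $B$ together with this isolation condition ensures that consecutive special points are separated by at least $2n$ in $X$-position; combined with the i.i.d.\ structure of the gaps and the locality of the defining condition, this implies that special points exist almost surely, form a stationary point process that is a finitary factor of $X$, and are regenerative for $X$---the restrictions of $X$ to consecutive inter-special intervals $(x', x]$ are conditionally independent given the special-point sequence, with conditional law depending only on $x - x'$.

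Next, within each inter-special interval $(x', x]$ containing $k$ points of $X$, I would condition on everything in $(x', x]$ except $x_1, \ldots, x_{n-1}$; this fixes the $k - n$ preserved gaps and the length $\ell := x - x_n$ of the resampling window, and leaves the $n$ gaps $(x - x_1, x_1 - x_2, \ldots, x_{n-1} - x_n)$ free to be resampled. By continuous-divisibility, the conditional distribution of this $n$-tuple, call it $D^B_\ell$, is almost surely atomless. Let $X^+$ be the target simply-marked version with mark law $\mu_t$ for a point with preceding gap $t$, and let $\nu$ be the joint distribution of the $n$ resampled gaps together with $k$ marks for all points of the segment---each mark drawn, conditionally independently given the gaps, from $\mu_g$, where $g$ is the corresponding preceding gap (preserved or resampled). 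Since its marginal on the gap tuple is atomless, $\nu$ is itself atomless, so the Borel isomorphism theorem produces a Borel isomorphism from $D^B_\ell$ to $\nu$, chosen measurably in the conditioning. I would apply this isomorphism independently in each inter-special interval to obtain the output.

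The resulting map is finitary since special points are a finitary factor of $X$ and the isomorphism acts locally; its inverse is finitary since the Borel isomorphism is a bijection and the special points are preserved (the $B$-pattern at the right end of each segment is maintained by construction, and the isolation condition involves only preserved gaps). The main obstacle will be verifying the regeneration property carefully in this setting: unlike the original proof, where the defining conditions involved individual i.i.d.\ gaps, the $B$-condition jointly constrains $n$ consecutive gaps, so one must use the non-overlap and isolation conditions to decouple the cross-segment constraints arising from non-special intermediate points.
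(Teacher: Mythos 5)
Your proposal is essentially the paper's proof: define special points via the $n$-gap $B$-condition, observe that they form a regenerative finitary factor of $X$, condition on the preserved gaps, and apply a Borel isomorphism per inter-special segment to resample the $n$ free gaps while simultaneously generating marks with the correct (gap-dependent) laws, yielding a finitary isomorphism whose inverse is finitary because the special points are preserved. The only deviation is your explicit isolation clause $(x_n-x_{n+1},\ldots,x_{2n-1}-x_{2n}) \notin B$, which is superfluous: the non-overlap requirement built into the definition of continuous divisibility already guarantees that, almost surely, $x_1,\ldots,x_{n-1}$ are not special whenever $x$ is, which is all the regeneration and resampling arguments need (this was precisely the role of $x_2-x_3\notin A$ in the proof of \cref{lem:marking}, where $A$ carries no analogous non-overlap property); incidentally, your extra clause yields separation of at least $n+1$ points between consecutive special points, not $2n$, though this miscount is not load-bearing.
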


\begin{proof}
The proof is very similar to that of \cref{lem:marking}, and we only indicate the required changes. To keep things simple, we explain separately the two required modifications; it should then be clear how to make both changes simultaneously.

We first explain how to allow simply-marked versions rather than just independently-IID-marked versions. As before, given an interval $(x',x]$ of consecutive special points, we condition on the restriction of $X$ to $(x',x_2]$. Letting $x'=x_k<\cdots<x_1<x_0=x$ denote the points in $[x',x]$, this is equivalent to conditioning on $k$ and $(x_2,\dots,x_{k-1})$. Recall that a simply-marked renewal point process is described by its jump-mark distribution, a joint distribution $(T,M)$ of the distance to and mark of the next point. Let $((T_i,M_i))_{1 \le i \le k}$ be independent random variables having this distribution. Let $((T'_i,M'_i))_{1 \le i \le k}$ be random variables having the conditional distribution of $((T_i,M_i))_{1 \le i \le k}$ given that $T_1+\cdots+T_k=x-x'$, $T_i=x_i-x_{i-1}$ for all $3 \le i \le k$, and $T_1,T_2 \in A$.
Then $(T'_i)_{1 \le i \le k}$ has the same distribution as the conditional distribution of $(x_1-x_0,\dots,x_k-x_{k-1})$, so that $T'_1$ has the same distribution as $D_\ell$, where $\ell=x-x_2$. In particular, $((T'_i,M'_i))_{1 \le i \le k}$ has a continuous distribution so that the isomorphism theorem yields a Borel isomorphism such that
\[ \phi_{\ell,k}(D_\ell) \eqd (T'_1, M'_1,\dots,M'_k) .\]
Continuing as before, this allows to resample the point $x_1$, while generating marks with the desired distribution for the points $x_0,\dots,x_{k-1}$.

We now explain how to allow continuously-divisible jump distributions rather than just non-singular ones.
For this, we modify the definition of special points. Let $n \ge 1$ and $B \subset \R^n$ be as in the definition of continuously-divisible. Given a point $x$ of $X$, let $x_n<\cdots<x_2<x_1<x_0=x$ denote the $n$ nearest points to its left.
Call the point $x$ special if $(x_0-x_1,x_1-x_2,\dots,x_{n-1}-x_n) \in B$. Clearly, special points exist, and if $x$ is special, then $x_1,\dots,x_{n-1}$ are not special. Note that if $(x',x]$ is an interval of consecutive special points, then given $X \cap (x',x_n]$, the conditional distribution of $(x_1,\dots,x_{n-1})$ is continuous.
The proof then proceeds along the same lines as before: conditioning on the set of special points, for each interval $(x,x']$ of consecutive special points, we apply a Borel isomorphism in order to resample the process in that interval (or just the points $x_1,\dots,x_{n-1}$), while simultaneously generating independent marks for the points (independent also of the position of the points themselves), with the distribution of each mark chosen according to the distance of its point to the nearest point to its left.
\end{proof}

\cref{lem:marking2} may be equivalently stated as saying that any stationary simply-marked renewal point process whose jump distribution is continuously-divisible is finitarily isomorphic to its underlying unmarked point process.
The following is a version of this for finitarily-regenerative processes.

\begin{lemma}\label{lem:regenerative}
Let $X$ be a stationary finitarily-regenerative process and let $Y$ be a stationary renewal point process whose jump distribution is continuously-divisible. Suppose that $Y$ is finitarily-regenerative for $X$. Then $X$ is finitarily isomorphic to $Y$.
\end{lemma}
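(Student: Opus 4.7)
The plan is to identify $X$ with a stationary simply-marked renewal point process $\tilde X$ whose underlying unmarked process is $Y$, and then invoke \cref{lem:marking2}. For each point $p_i$ of $Y$ (with the usual indexing), I would let the mark $m_i$ at $p_i$ be the excursion of $X$ on $[p_{i-1}, p_i)$, encoded as an element of a suitable standard Borel space---for instance, by pairing the length $L_i = p_i - p_{i-1}$ with the trajectory $t \mapsto X_{p_{i-1}+t}$ extended by a default value beyond time $L_i$ so as to become an element of the Polish space of c\`adl\`ag functions on $[0,\infty)$. The hypothesis that $Y$ is regenerative for $X$ says precisely that, given $Y$, the marks $(m_i)$ are conditionally independent and the conditional distribution of $m_i$ depends only on $L_i$; equivalently, $\tilde X$ is a stationary simply-marked renewal point process with underlying unmarked point process $Y$.

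The next step is to establish $X \fiso \tilde X$. The map $X \mapsto \tilde X$ is a factor, since $Y$ is a finitary factor of $X$ by assumption and each mark $m_i$ is a deterministic function of $X$ together with the adjacent points of $Y$; finitariness of this factor follows from the finitariness of $Y$ as a factor of $X$, together with the fact that producing the restriction of $\tilde X$ to $[-1,1]$ requires only observing $X$ between the nearest points of $Y$ just outside $[-1,1]$, which is an almost-surely-bounded window. Conversely, the map $\tilde X \mapsto X$ is plainly a finitary factor: one reconstructs $X$ on $[-1,1]$ by concatenating the marks of $\tilde X$ at the points of $Y$ whose associated excursions cover $[-1,1]$, which requires only observing $\tilde X$ on the bounded window determined by the two nearest points of $Y$ on either side of $[-1,1]$.

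With $X \fiso \tilde X$ in hand, \cref{lem:marking2} applied to $\tilde X$ (whose underlying jump distribution is continuously-divisible by hypothesis on $Y$) gives $\tilde X \fiso Y$, and composing these isomorphisms yields $X \fiso Y$. I expect the only real subtlety to lie in the careful bookkeeping required to package the variable-length excursions as marks in a fixed Polish space and to verify the resulting measurability and translation-equivariance; conceptually, the lemma is essentially a repackaging of the finitarily-regenerative hypothesis combined with the already-proved \cref{lem:marking2}.
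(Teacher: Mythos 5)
Your proposal is correct and follows essentially the same route as the paper: mark each point of $Y$ by the excursion of $X$ since the previous point, observe that the finitarily-regenerative hypothesis makes this a simply-marked version of $Y$ that is finitarily isomorphic to $X$, and then apply \cref{lem:marking2}. The extra care you take about packaging variable-length excursions into a fixed Polish space and about checking finitariness in both directions is a fleshing out of details the paper leaves implicit, not a different argument.
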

\begin{proof}
Construct a marked point process $Y^+$ from $Y$ by marking a point $b$ of $Y$ by the excursion of $X$ from $a$ to $b$, namely, $(X_{t+a})_{0 \le t < b-a}$, where $a$ is the nearest point of $Y$ to the left of $b$.
Since $Y$ is a finitary factor of $X$, it is clear that $Y^+$ is finitarily isomorphic to $X$.
Since the paths of $X$ are right-continuous with left-hand limits in a Polish space, $(X_{t+a})_{0 \le t < b-a}$ takes values in a Polish space, and since $Y$ is finitarily-regenerative for $X$, it follows from the definitions that $Y^+$ is a simply-marked version of $Y$. Thus, \cref{lem:marking2} yields that $X$ is finitarily isomorphic to $Y$.
\end{proof}

Let us show how these, together with \cref{thm:RP2}, imply \cref{thm:simply-marked-RP} and \cref{thm:regenerative}.

\begin{proof}[Proof of \cref{thm:simply-marked-RP}]
Let $X$ be a stationary renewal point process whose jump distribution has exponential tails and a non-singular convolution power, and let $X^+$ be a simply-marked version of $X$. By \cref{thm:RP2}, $X$ is finitarily isomorphic to a Poisson point process, and by \cref{lem:marking2}, it is also finitarily isomorphic to $X^+$, showing that $X^+$ is finitarily isomorphic to a Poisson point process.
\end{proof}

\begin{proof}[Proof of \cref{thm:regenerative}]
Let $X$ be a stationary finitarily-regenerative process having an associated jump distribution $T$ that has exponential tails and a non-singular convolution power. By definition, there exists a renewal point process $Y$ with jump distribution $T$, which is finitarily-regenerative for $X$. By \cref{thm:RP2}, $Y$ is finitarily isomorphic to a Poisson point process, and by \cref{lem:regenerative}, it is also finitarily isomorphic to $X$, showing that $X$ is finitarily isomorphic to a Poisson point process.
\end{proof}

\section{Renewal point processes with regular jump distributions}
\label{sec:renewal-with-regular-jd}

The first goal of this section is to prove \cref{thm:RP} for a very particular class of renewal point processes.
A \textbf{simple-selection point process} is any point process $X$ which can be obtained as follows. Let $\Lambda$ be an independently-IID-colored Poisson point process with two colors (the two colors have positive probability, but need not have equal probability). Suppose that one of the two colors is red, and fix $t \ge 0$. The points of $X$ are those red points $x$ of $\Lambda$ for which $\Lambda$ contains no point (of either color) in $(x-t,x)$.
We note that $X$ is a stationary renewal point process.

\begin{lemma}\label{lem:simple-selection}
Any simple-selection point process is finitarily isomorphic to any Poisson point process.
In particular, any two Poisson point processes are finitarily isomorphic.
\end{lemma}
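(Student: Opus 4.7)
The plan is to first reduce the full statement to the claim that any two simple-selection point processes are finitarily isomorphic. This reduction is immediate, because a Poisson point process of intensity $\mu$ is itself a simple-selection process (choose the underlying Poisson to have intensity $\mu/p$, red probability $p\in(0,1]$, and $t=0$). Once the restricted claim is established, the comparison of any simple-selection process to any Poisson point process, and in particular the finitary isomorphism between any two Poisson point processes, follows by composing the resulting isomorphisms.

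For two simple-selection processes $X_i$ with parameters $(\lambda_i,p_i,t_i)$, I would carry out a marker-and-surgery argument analogous to the proof of \cref{lem:marking}, using the explicit Poisson structure to keep everything under control. Each $X_i$ is a stationary renewal point process whose jump distribution $T_i$ decomposes as a geometric random sum of i.i.d.\ Poisson-derived increments; in particular $T_i$ is absolutely continuous (supported on $(t_i,\infty)$ when $t_i>0$) and has exponential tails. I would then declare a point of $X_i$ to be a \emph{marker} if its next inter-arrival distance falls in a measurable set $A_i$ chosen so that $0<\Pr(T_i\in A_i)<1$, so that the conditional law of $T_i$ given $T_i\in A_i$ is absolutely continuous, and so that the marker process is regenerative for $X_i$ with non-atomic conditional distribution of the inter-marker segment given its length. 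A Borel isomorphism, exactly as in the proof of \cref{lem:marking}, would then bijectively transform inter-marker segments of $X_1$ into those of $X_2$ in a translation-equivariant and measure-preserving way, with the coding window at a given location determined by looking only at the current inter-marker segment.

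The main obstacle is that the marker renewal processes of $X_1$ and $X_2$ will in general have different jump distributions, so the two processes cannot be matched marker-by-marker in a direct fashion. I see two plausible resolutions. The first is to construct a hierarchy of markers in each $X_i$, in which markers at one level are subsets of those at the previous level and whose jump distributions can be tuned across levels to agree between $X_1$ and $X_2$ at some sufficiently coarse level. The second, more direct route, is to exploit the flexibility in choosing $A_1,A_2$ together with the underlying Poisson parameters so as to engineer the two marker processes to have a common jump distribution outright (for example, a prescribed shifted exponential of a given rate). Either way, executing this matching while maintaining translation-equivariance and ensuring that the map and its inverse are both finitary is where I expect the principal technical work to lie.
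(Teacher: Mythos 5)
Your reduction to ``any two simple-selection processes are finitarily isomorphic'' is sound (a Poisson point process is the $t=0$ case), but the argument you then sketch does not close. The marker-and-surgery technique from \cref{lem:marking} is a device for producing \emph{marks} on the \emph{same} underlying point process: it resamples a small piece of a segment so as to extract, bijectively, independent auxiliary randomness, leaving the segment length and the marker set intact. It is not a device for mapping segments distributed according to one jump law to segments distributed according to another. You correctly identify that the two marker renewal processes will have different jump distributions, but neither proposed resolution overcomes this. The ``common jump distribution'' route cannot work outright: the marker process of a simple-selection process with parameters $(\lambda_i,p_i,t_i)$ is again a renewal process whose jump law is determined by those parameters, and making two of them literally agree would force the underlying Poisson parameters to agree; the hierarchical version is left entirely unspecified, and it is far from clear that ``tuning at a coarse level'' is achievable. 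So as written, the proof has a genuine hole precisely at the step you flag as ``where I expect the principal technical work to lie.''

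The paper avoids this difficulty by never comparing two simple-selection processes head-on. Instead, it compares the simple-selection process $X$ to the very process $\Lambda$ from which it was built: since the special-point process is a finitary block factor of $\Lambda$, is regenerative for $\Lambda$, and has an absolutely continuous jump distribution with exponential tails, \cref{lem:regenerative} gives $X \fiso \Lambda$, and \cref{lem:marking} then gives $\Lambda \fiso \Lambda'$, the underlying uncolored Poisson process. This uses the asymmetry between a process and one of its finitarily-regenerative factors -- exactly the setting where the resampling trick applies -- rather than a symmetric matching between two arbitrary simple-selection processes. The ``any two Poisson processes'' part then drops out by observing, as you do, that a Poisson process of intensity $\lambda$ is a simple-selection process built from a Poisson process of any larger intensity $\lambda'$, so $X\fiso\Lambda'$ chains to show $\lambda$-Poisson $\fiso$ $\lambda'$-Poisson. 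I would recommend reorienting your argument around the relation $X \fiso \Lambda$ (source process finitarily isomorphic to the richer ambient process), rather than $X_1 \fiso X_2$, since the former is what the regeneration machinery actually delivers.
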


\begin{proof}
Let $X$ be a simple-selection point process and let $\Lambda$ and $t$ be as above. We begin by showing that $X$ is finitarily isomorphic to \emph{some} Poisson point process. Specifically, we show that $X \fiso \Lambda'$, where $\Lambda'$ is the uncolored Poisson point process underlying $\Lambda$. Since $\Lambda' \fiso \Lambda$ by \cref{lem:marking}, it suffices to show that $X \fiso \Lambda$. Call a point of $\Lambda$ special if it is red and there is no other point at distance less than $t$ to its left. Clearly, the special-point process $S$ has the same law as $X$, and is a finitary factor of $\Lambda$ (in fact, it is a block factor). To see that the two are finitarily isomorphic, note that, given $S$, the restrictions of $\Lambda$ to the (half-open half-closed) intervals between two consecutive special points are conditionally independent. Moreover, the conditional distribution on any such interval $(a,b]$ depends only on its length $b-a$. Thus, $S$ is regenerative for $\Lambda$. Since the jump distribution of $S$ is easily seen to be absolutely continuous with exponential tails, \cref{lem:regenerative} yields that $S$ is finitarily isomorphic to $\Lambda$. This completes the proof that $X \fiso \Lambda'$.

To obtain the lemma, it remains to show that any two Poisson point processes are finitarily isomorphic, as this will also show that $X$ is finitarily isomorphic to \emph{any} Poisson point process.
Using the notation above, let us say that $X$ is defined through $(\Lambda,\Lambda',t)$. By what we have just shown, any simple-selection point process defined through $(\Pi,\Pi',s)$ is finitarily isomorphic to $\Pi'$. Observe that a Poisson point process of intensity $\lambda$ can be obtained as a simple-selection point process defined through $(\Pi,\Pi',0)$, where $\Pi'$ is a Poisson point process of any intensity $\lambda'>\lambda$. Indeed, construct $\Pi$ by independently coloring the points of $\Pi'$ red with probability $\lambda/\lambda'$, and blue otherwise. The simple-selection point process obtained (which simply consists of all red points of $\Pi$) is a Poisson point process of intensity $\lambda$. This shows that the two Poisson point processes of intensities $\lambda$ and $\lambda'$ are finitarily isomorphic.
\end{proof}

The main goal of this section is to prove \cref{thm:RP} under a regularity assumption on the jump distribution. To do so, we first isolate the following simple corollary. A \textbf{single-mark marginal} of a marked point process is the unmarked point process consisting of all points having a fixed mark.

\begin{cor}\label{cor:multi-RP}
Let $X$ be a stationary renewal point process with a non-singular jump distribution. Suppose that some simply-marked version of $X$ has some single-mark marginal that is a simple-selection point process. Then $X$ is finitarily isomorphic to any Poisson point process.
\end{cor}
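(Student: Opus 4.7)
The plan is to chain finitary isomorphisms $X \fiso X^+ \fiso Y \fiso \Pi$, where $X^+$ is the hypothesized simply-marked version of $X$, $Y$ is its distinguished single-mark marginal (the points carrying the fixed mark, say ``red''), and $\Pi$ is an arbitrary Poisson point process. Two of the three links come essentially for free: since $X$ has a non-singular and hence continuously-divisible jump distribution, \cref{lem:marking2} gives $X \fiso X^+$; and since $Y$ is a simple-selection point process by assumption, \cref{lem:simple-selection} gives $Y \fiso \Pi$.

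The crux is the middle link $X^+ \fiso Y$, which I would establish by \cref{lem:regenerative}, viewing $X^+$ as a continuous-time process in the natural way described at the end of \cref{sec:preliminary}. To apply that lemma I must verify (a) that $Y$ is a renewal point process whose jump distribution is continuously-divisible, and (b) that $Y$ is finitarily-regenerative for $X^+$. Part (a) is immediate: a simple-selection point process arises as a local thinning of a Poisson point process, so its jump distribution is absolutely continuous, in particular non-singular and hence continuously-divisible.

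For (b), the finitary factor condition is trivial, since $Y$ is read off $X^+$ as a block factor by inspecting the mark at each point. For the regenerative property, I would exploit the combined renewal and simply-marked structure of $X^+$: the sequence of jump-mark pairs $(T_i,M_i)$ at consecutive points of $X$ is i.i.d.\ under the jump-mark law. The indices at which $M_i$ is red therefore partition the i.i.d.\ sequence into independent finite runs, each ending in its first red mark. This makes the segments of $X^+$ between consecutive $Y$-points mutually independent, and by stationarity and the i.i.d.\ structure the conditional law of each such segment given $Y$ depends only on its length -- which is precisely the regenerative condition.

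The main subtlety I anticipate is the regenerative verification in (b): one must carefully combine the renewal property of $X$ with the pointwise simply-marked condition on $X^+$ in order to extract the genuine i.i.d.\ structure of $(T_i,M_i)$ and thereby the clean decoupling of the inter-$Y$ segments. Once this is in hand, \cref{lem:regenerative} delivers $X^+ \fiso Y$ and the three-step chain completes the proof.
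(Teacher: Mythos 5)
Your proposal is correct and follows essentially the same route as the paper: chain $X \fiso X^+$ (via \cref{lem:marking2}), $X^+ \fiso Y$ (via \cref{lem:regenerative}, after checking $Y$ is a continuously-divisible-jump renewal process that is finitarily-regenerative for $X^+$), and $Y \fiso$ Poisson (via \cref{lem:simple-selection}). You spell out a few verifications that the paper declares ``straightforward'' --- notably that the simple-selection jump distribution is absolutely continuous and hence continuously-divisible, and that the i.i.d.\ jump-mark structure of a simply-marked renewal process yields regenerativity of the single-mark marginal --- but the decomposition and the lemmas invoked are identical.
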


\begin{proof}
Let $X^+$ be a simply-marked version of $X$ having some fixed-mark marginal $X'$ that is a simple-selection point process. By \cref{lem:simple-selection}, $X'$ is finitarily isomorphic to any Poisson point process. Clearly, $X'$ is a finitary factor of $X^+$, and since $X^+$ is simply-marked, it is straightforward that $X'$ is regenerative for $X^+$. \cref{lem:regenerative} thus implies that $X^+ \fiso X'$. Finally, since $X \fiso X^+$ by \cref{lem:marking2}, we conclude that $X$ is finitarily isomorphic to any Poisson point process.
\end{proof}

We are now ready to prove the following special case of \cref{thm:RP}.

\begin{prop}\label{prop:RP-special-case}
Any Poisson point process is finitarily isomorphic to any stationary renewal point process $X$ whose jump distribution $T$ is unbounded and absolutely continuous with a density $f$ satisfying
\begin{equation}\label{eq:regular-density}
 \liminf_{t \to \infty} \frac{f(t)}{\Pr(T>t)} > 0 \qquad\text{and}\qquad \sup_{t \ge 0} \frac{f(t)}{\Pr(T>t)} < \infty .
\end{equation}
\end{prop}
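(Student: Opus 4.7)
The plan is to invoke Corollary~\ref{cor:multi-RP}: I will construct a simply-marked version $X^+$ of $X$ with marks in $\{R,B\}$ such that the sub-point-process $X'$ of red points has the distribution of a simple-selection point process. Since $X$ is renewal, any such marking is specified by a function $q\colon(0,\infty)\to[0,1]$: mark each point of $X$ whose previous gap is $T_i$ red with probability $q(T_i)$ and blue otherwise, independently. The pairs $(T_i,M_i)$ are then i.i.d., which makes $X'$ a stationary renewal point process whose jump distribution $D$ has Laplace transform
\[
\phi_D(s) \;=\; \frac{H(s)}{1-\phi_T(s)+H(s)},\qquad H(s):=\int_0^\infty e^{-st}q(t)f(t)\,dt,
\]
obtained by summing over the geometric number of blues between consecutive reds.

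Next, I compute the Laplace transform of the jump of a simple-selection point process with parameters $(\lambda,p,t^*)$ by a short renewal recursion (condition on the location and color of the first Poisson point after a selected one): this gives $\phi_{SS}(s) = c\,e^{-st^*}/(s+c\,e^{-st^*})$ with $c = p\lambda e^{-\lambda t^*}$. Setting $\phi_D=\phi_{SS}$ and solving algebraically yields $H(s)=c\,e^{-st^*}(1-\phi_T(s))/s$; using $(1-\phi_T(s))/s=\int_0^\infty e^{-su}\bar F(u)\,du$ and inverting gives the explicit pointwise formula
\[
q(t)\;=\;\frac{c\,\bar F(t-t^*)}{f(t)}\cdot \mathbf{1}_{t\ge t^*}.
\]

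The remaining task, and the main obstacle, is to select $c>0$ and $t^*>0$ so that $q(t)\in[0,1]$ for all $t$ and so that the simple-selection parameters $(\lambda,p,t^*)$ with $p\lambda e^{-\lambda t^*}=c$ can actually be realized with $p\in(0,1]$ and $\lambda>0$. This is precisely where the regularity hypothesis~\eqref{eq:regular-density} enters. Writing $h:=f/\bar F$, fix $\alpha,t_1>0$ with $h\ge\alpha$ on $[t_1,\infty)$, and let $\beta:=\sup_t h(t)<\infty$. For $t\ge t^*\ge t_1$, the upper bound $h\le\beta$ gives $\bar F(t-t^*)\le e^{\beta t^*}\bar F(t)$, while the lower bound gives $f(t)\ge\alpha\bar F(t)$, and combining these yields $f(t)\ge\alpha e^{-\beta t^*}\bar F(t-t^*)$; choosing $c:=\alpha e^{-\beta t^*}$ therefore forces $q\le 1$, and unboundedness of $T$ ensures $\bar F>0$, so $q>0$ on $(t^*,\infty)$. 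Finally, the function $\lambda\mapsto\lambda e^{-\lambda t^*}$ attains maximum $1/(et^*)$, so $p\lambda e^{-\lambda t^*}=c$ is solvable with $p=1$ provided $c\le 1/(et^*)$, which holds for all sufficiently large $t^*$ since $c$ decays exponentially in $t^*$ while $1/(et^*)$ decays only polynomially. With these $t^*$ and $c$ in hand, $X'$ is distributionally a simple-selection point process, $X$ itself is non-singular (being absolutely continuous), and Corollary~\ref{cor:multi-RP} concludes the proof.
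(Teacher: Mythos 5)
Your approach is correct and reaches the same intermediate target (\cref{cor:multi-RP}) as the paper, but via a genuinely different construction of the simply-marked version $X^+$. The paper builds $X^+$ directly from a two-dimensional Poisson process by a coupling-from-the-past scheme driven by the hazard rate $f(t)/\Pr(T>t)$; the simple-selection red marginal $\tilde\Pi$ then appears automatically by construction, but one must separately verify that the resulting colored process is simply-marked. You instead parametrize the marking by the conditional probability $q(t)$ that a point following a gap of length $t$ is red, compute the Laplace transforms of the resulting red-gap distribution and of a simple-selection gap distribution (both formulas check out), and solve the resulting functional equation to obtain $q(t)=c\,\Pr(T>t-t^*)/f(t)\cdot\mathbf{1}_{t\ge t^*}$. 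The regularity hypothesis~\eqref{eq:regular-density} then enters precisely as the condition guaranteeing $q\in[0,1]$: the upper bound $\sup h<\infty$ controls $\Pr(T>t-t^*)/\Pr(T>t)$ and the lower bound $\liminf h>0$ controls $\Pr(T>t)/f(t)$ (and ensures $f>0$ on $[t^*,\infty)$, so $q$ is well-defined). Your route is more algebraic and makes the role of~\eqref{eq:regular-density} transparent; what it loses is the explicit construction and the coupling-from-the-past interpretation noted in \cref{rem:reversed-process}. One small correction: the definition of a simple-selection point process requires both colors to occur with positive probability, so $p=1$ is not permitted. This is easily fixed: halve $c$ (which only decreases $q$, so $q\in[0,1]$ is preserved) and take $p=1/2$ with $\lambda$ solving $\lambda e^{-\lambda t^*}=2c$, which is possible once $t^*$ is large enough that $2c=2\alpha e^{-\beta t^*}\le 1/(et^*)$.
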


\begin{proof}
Our strategy is to show that $X$ satisfies the assumptions of \cref{cor:multi-RP}. To do so, we define a simply-marked version $X^+$ of $X$, with the marks taking one of two possible values called red and blue, such that the red-color marginal of $Z$ is a simple-selection point process. Once we have done this, the proposition will immediately follow from \cref{cor:multi-RP}.

Let us first explain the idea of the proof in the simpler situation in which
\[ \lambda_c := \inf_{t \ge 0} \frac{f(t)}{\Pr(T>t)} > 0 .\]
To define the simply-marked point process $X^+$, it suffices to describe its jump-marked distribution, i.e., the joint distribution of $(T',C)$, the distance to and color of the subsequent point, as this completely describes any stationary simply-marked point process.
Let $T_1$ be an exponential random variable with rate $\lambda<\lambda_c$. We claim that there exists a (unique) random variable $T_2$, independent of $T_1$, such that $\min\{T_1,T_2\}$ has the same distribution as $T$. Indeed, such a variable is defined by
\[ \Pr(T_2 > t) = \Pr(T > t) \cdot e^{\lambda t} \qquad\text{for all }t>0 .\]
To see that this defines a random variable, we must only check that $g(t):=\Pr(T > t) \cdot e^{\lambda t}$ is non-increasing (it will not cause us trouble if $T_2=\infty$ with positive probability, though one may check that this cannot happen). Since $g$ is absolutely continuous, it suffices to show that its derivative is non-positive almost everywhere. Indeed, $g'(t) = (\lambda \Pr(T>t) - f(t))e^{\lambda t}$, so that this follows from the fact that $\lambda<\lambda_c$.
We also mention that no such variable $T_2$ exists for any $\lambda>\lambda_c$.
We now define
\[ (T',C) := \begin{cases}
 (T_1,\text{red}) &\text{if }T_1 \le T_2 \\
 (T_2,\text{blue}) &\text{if }T_1 > T_2
\end{cases}.\]
Then $T'$ has the same distribution as $T$, so that $X^+$ is indeed a simply-marked version of $X$ (i.e., the uncolored point process underlying $X^+$ has precisely the same law as $X$). One may imagine that there are two independent clocks governing this simply-marked renewal point process: a red clock which rings after time $T_1$ and a blue clock which rings after time $T_2$. When a clock rings, a point of that color is placed and both clocks are reset. In fact, since the exponential distribution is memoryless, there is no need to reset the red clock when the blue clock rings. The red clock is thus unaffected by the blue clock. It follows that the red-color marginal of $X^+$ is simply a Poisson point process of intensity $\lambda$. Thus, \cref{cor:multi-RP} yields that $X$ is finitarily isomorphic to any Poisson point process.

\smallskip

Let us now return to the general case. The simply-marked point process $X^+$ can be defined in the same manner as above, with the distributions of $T_1$ and $T_2$ chosen differently.
However, since $\lambda_c$ may be zero, we cannot take $T_1$ to be exponential, and thus should not hope that the red-color marginal of $X^+$ will be a Poisson point process as before. Instead, we aim to have the red-color marginal be a simple-selection point process. Rather than defining $T_1$ and $T_2$ explicitly as in the simplified situation above, we define $X^+$ here in a different way which makes the construction more illuminating.

Let $\Pi$ be a Poisson point process of unit intensity on $\R \times \R^+$. In practice, our construction will only depend on the restriction of $\Pi$ to $\R \times (0,\lambda')$ for a large fixed $\lambda'$.
For $h>0$, let $\Pi_h$ be the projection of $\Pi$ from $\R \times (0,h)$ onto $\R$, so that $\Pi_h$ is a Poisson point process of intensity $h$ on $\R$.
For $x \in \R$, we define
\[ T(x) := \inf \{ t>0 : x+t \in \Pi_{f(t)/\Pr(T>t)} \} .\]
We remark that $f(t)/\Pr(T>t)$ may be thought of as the ``instantaneous rate'' with which points appear in the renewal point process $X$, given that time $t$ has elapsed since the last point appeared (see \cref{rem:reversed-process}). To substantiate this, let us check that, for any fixed $x$, $T(x)$ has the same distribution as $T$ (see \cref{fig:pp1}). By translation-invariance, it suffices to check this for $x=0$. Indeed,
\begin{align*}
 \Pr(T(0)>t)
  &= \Pr\big(s \notin \Pi_{f(s)/\Pr(T>s)}\text{ for all }0<s<t\big) \\
  &= \Pr\big(\Pi\text{ has no point in }\big\{(s,h) : 0<s<t,~0<h<f(s)/\Pr(T>s)\big\}\big) \\
  &= \exp\left[- \text{Leb}\big\{(s,h) : 0<s<t,~0<h<f(s)/\Pr(T>s)\big\}\right] \\
  &= \exp\left[- \int_0^t \frac{f(s)}{\Pr(T>s)}ds \right] = \exp\left[- \int_{\Pr(T>t)}^1 \frac{1}{u} du \right] = \Pr(T>t).
\end{align*}
Thus, we may regard $T(x)$ as the distance to the next point of $X$ after $x$, given that there is a point of $X$ at~$x$. To make this more precise, define
\[ N_0(x):=x, \qquad\text{and}\qquad N_{n+1}(x) := N_n(x)+T(N_n(x)) \qquad\text{for }n \ge 0 .\]
Let $P_x$ be the point process on $[x,\infty)$ defined by
\[ P_x := \{ x, N_1(x), N_2(x), \dots \} .\]
See \cref{fig:pp2} for an illustration of this point process.
Note that, for any fixed $x$, $T(x)$ is a stopping time with respect to the filtration $(\cF_{x+t})_{t \ge 0}$, where $\cF_t$ is the $\sigma$-algebra generated by the restriction of $\Pi$ to $(-\infty,t] \times \R^+$.
Similarly, for any fixed $x$ and $n$, $N_n(x)-x$ is a stopping time with respect to the same filtration.
Using this and the strong Markov property of the Poisson point process, it is easy to check that, for any fixed $x$, $P_x$ is a one-sided renewal point process on $[x,\infty)$ with jump distribution $T$ and a first point at $x$. Note also that once two such processes have a common point, they agree from that point onward, i.e.,
\begin{equation}\label{eq:coalesce}
\text{$P_{x_1}$ and $P_{x_2}$ coincide on $[x,\infty)$ for any $x$ that is a common point of both $P_{x_1}$ and $P_{x_2}$} .
\end{equation}

\begin{figure}
\includegraphics[scale=0.65,trim={0cm 0cm 1cm 0cm},clip]{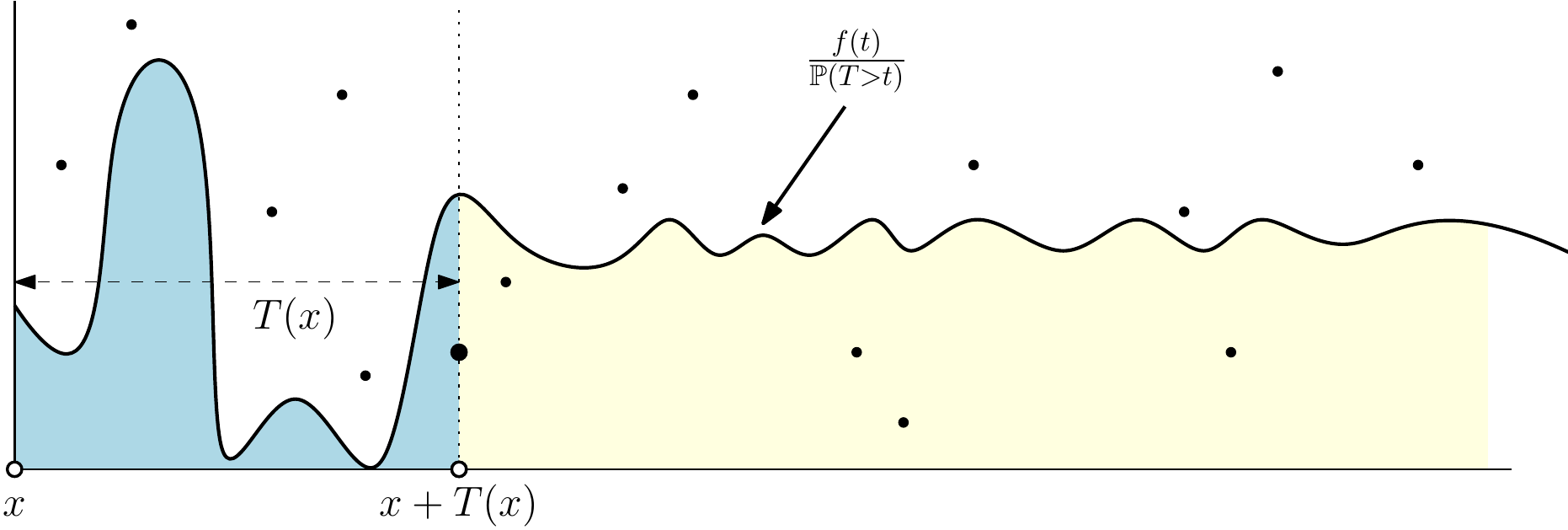}
\caption{One may find many copies of the jump distribution embedded within a two-dimensional Poisson point process. Namely, for each $x$, the time $T(x)$ until the arrival of the first point which lies below the graph of the function $f(t)/\Pr(T>t)$ (shifted to start at $x$) has the same law as $T$.}
\label{fig:pp1}
\end{figure}
\begin{figure}
\includegraphics[scale=0.65,trim={0cm 0cm 1cm 0cm},clip]{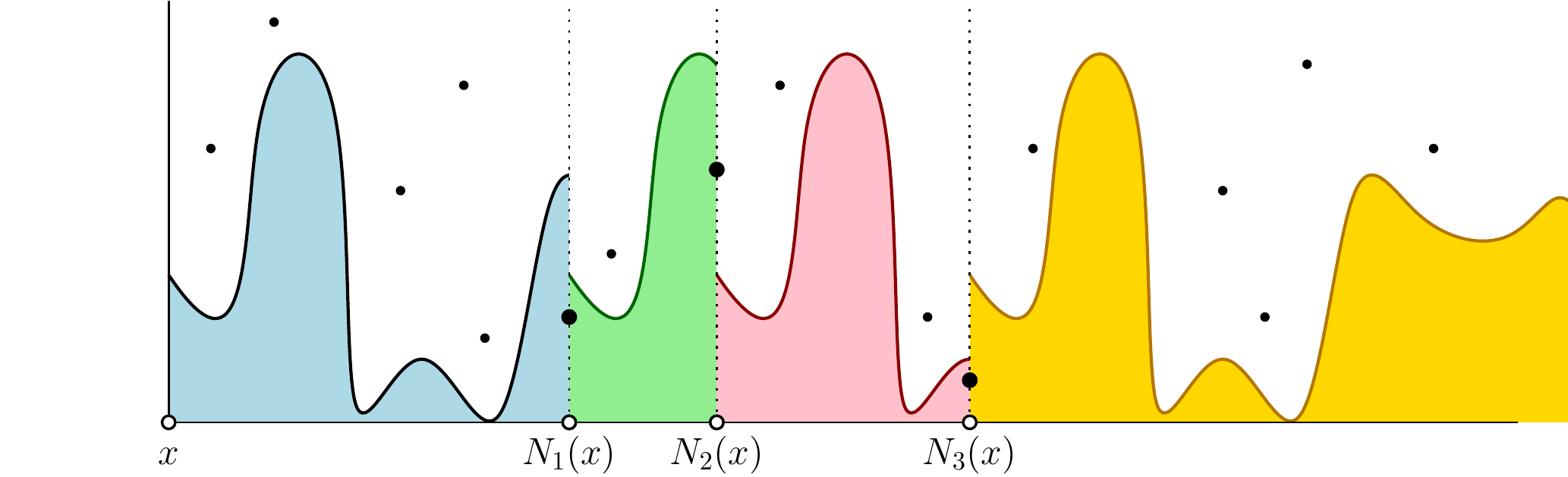}%
\caption{Constructing the one-sided renewal point process $P_x$. Starting at a given point $x$, one iteratively constructs $P_x$ by finding the distance to the next point according to the procedure in \cref{fig:pp1}. Starting at a different point $x'$ yields a different point process $P_{x'}$. If two such point processes happen to agree on a point, then they agree on all subsequent points.}
\label{fig:pp2}
\end{figure}

\begin{figure}
\includegraphics[scale=0.65,trim={0cm 0cm 1cm 0cm},clip]{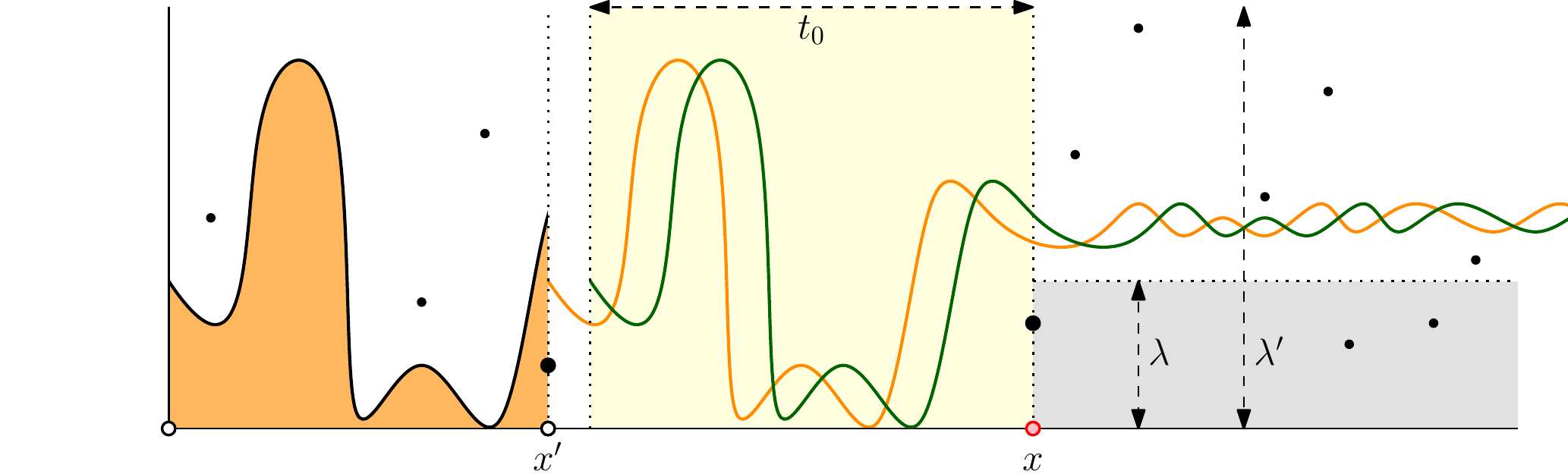}%
\caption{Constructing the stationary point process $\tilde\Pi$. Since $[t_0,\infty) \times [0,\lambda]$ lies entirely below the graph of $f(t)/\Pr(T>t)$, it also lies entirely below any translation of this graph to the left. This means that if $\Pi$ has a point of height at most $\lambda$ at time $x$, then starting from any point $x' \le x-t_0$, the next point in $P_{x'}$ cannot be later than $x$. If $\Pi$ also has no points of height less than $\lambda'$ between times $x-t_0$ and $x$, then $P_{x'}$ cannot have a point strictly between $x-t_0$ and $x$. It follows that $x$ is a point of every such $P_{x'}$. Such points make up $\tilde\Pi$ and are colored red in $X^+$.}
\label{fig:pp3}
\end{figure}

This last observation raises the possibility of obtaining a limiting process as $x \to -\infty$. Indeed, our next goal is to show that, in a precise sense, $P_x$ locally stabilizes as $x \to -\infty$, and that this can be detected in a finitary manner.
By the assumption~\eqref{eq:regular-density} on the jump distribution $T$, there exist $t_0 \ge 0$ and $\lambda'>\lambda>0$ such that
\begin{equation*}\label{eq:bounded-local-rate}
\frac{f(t)}{\Pr(T>t)} > \lambda \qquad\text{for all } t \ge t_0, \qquad\qquad \frac{f(t)}{\Pr(T>t)} < \lambda' \qquad\text{for all } t \ge 0 .
\end{equation*}
Let $\tilde{\Pi}$ be the stationary point process defined by
\[ \tilde{\Pi} := \{ x \in \Pi_\lambda : \Pi_{\lambda'}\text{ has no point in }(x-t_0,x) \} .\]
See \cref{fig:pp3} for an illustration.
Note that
\[ \tilde{\Pi} \cap [x,\infty) \subset P_{x-t_0} \qquad\text{for all }x \in \R .\]
In particular, recalling~\eqref{eq:coalesce}, it follows that
\[ \text{$P_{x_1}$ and $P_{x_2}$ coincide on $[x,\infty)$ for any $x$ in $\tilde{\Pi}$ such that $x \ge \max\{x_1,x_2\}+t_0$} .\]
Since $\tilde{\Pi}$ is almost surely unbounded in $(-\infty,0]$, we deduce that
\[ \text{almost surely:} \qquad \substack{\text{\normalsize for any $x \in \R$ there exists $x' \le x$ such that}\\\text{\normalsize $P_{x_1}$ and $P_{x_2}$ coincide on $[x,\infty)$ for any $x_1,x_2 \le x'-t_0$.}} \]
Moreover, $x'$ can be chosen finitarily by taking it to be the first point of $\tilde\Pi$ to the left of $x$.
Let $X$ be the limiting point process $\lim_{x \to -\infty} P_x$. Note that $X$ has a very concrete description, namely, its restriction to an interval $[a,b]$ coincides with the restriction of $P_{x-t_0}$ to the same interval, where $x$ is any point to the left of $a$ such that $\tilde{\Pi}$ has a point in $[x,a]$. It is immediate from this that $X$ has the desired law, i.e., it is a stationary renewal point process with jump distribution $T$.

The above already shows that $X$ is a finitary factor of $\Pi$, a Poisson point process on $\R \times \R^+$ (which is finitarily isomorphic to a Poisson point process on $\R$; but this will not be used directly).
%In fact, since $f(t)/\Pr(T>t) < \lambda'$ for all $t \ge 0$, this factor depends on $\Pi$ only through its restriction to $\R \times [0,\lambda']$. Note that if one projects the latter to $\R$, marking each projected point with its original height (so that this operation is a finitary isomorphism), one obtains the independently-UIID-marked version of $\Pi_{\lambda'}$, which, by \cref{lem:marking}, is finitarily isomorphic to an unmarked Poisson point process of intensity $\lambda'$ on $\R$. This shows that $X$ is a finitary factor of a Poisson point process (on $\R$).
To see that it is finitarily isomorphic to a Poisson point process (on $\R$), we color the points of $X$ in two colors: points of $\tilde{\Pi}$ (all of which are also points of $X$) are colored red, and the rest are colored blue.
Let $X^+$ denote this colored point process.

We claim that $X^+$ is a simply-marked version of $X$. To see this, note first that the restriction of $X^+$ to $(-\infty,s]$ is determined by the restriction of $\Pi$ to $(-\infty,s] \times \R^+$. Next, note that if $X^+$ has a point at $s$ (of either color), then $\tilde\Pi$ cannot have a point in $(s,s+t_0)$. This is because a point of $X$ at $s$ implies that $s \in \Pi_{\lambda'}$, while a point of $\tilde\Pi$ at $x$ implies that $\Pi_{\lambda'}$ has no point in $(x-t_0,x)$. Next, note that the restriction of $\tilde\Pi$ to $[s+t_0,\infty)$ is determined by the restriction of $\Pi$ to $(s,\infty) \times \R^+$. Finally, note that $(T(x))_{x \ge s}$, and hence also $P_s$, is determined by the restriction of $\Pi$ to $(s,\infty) \times \R^+$. Together this shows that the restriction of $X^+$ to $(s,\infty)$ does not depend on its restriction to $(-\infty,s]$ whenever it has a point (of either color) at~$s$. To be more precise with this last statement, we mean that, for any fixed $s$, if $S$ denotes the smallest point of $\Pi$ in $[s,\infty)$, then the restriction of $\theta_S(X^+)$ to $(0,\infty)$ is independent of $\cF_S$. Indeed, this follows from the strong Markov property of the Poisson point process and the fact that $X^+$ is adapted to the filtration $(\cF_t)$.
Thus, $X^+$ is simply-marked.

Observe that $\tilde{\Pi}$ is a simple-selection point process. Since the red-color marginal of $X^+$ is $\tilde{\Pi}$, we see that the assumptions of \cref{cor:multi-RP} are satisfied by $X$, and thus conclude that $X$ is finitarily isomorphic to any Poisson point process.
\end{proof}

\begin{remark}\label{rem:reversed-process}
Any stationary renewal point process $X$ has a naturally associated continuous-time Markov process $Y$ consisting of (right-continuous) piecewise linear paths of slope $-1$ with jump discontinuities at the points of $X$. That is, if $a$ and $b$ are two consecutive points of $X$, then $Y_{b-t} = t$ for $t \in (0,b-a]$. Note that the value of $Y$ at some point $s$ represents the \emph{time until the next point}, so that $Y$ is deterministic at all times except when `hitting' 0, at which time it instantaneously jumps. In this sense, the randomness of the jump length between a point of $X$ and its consecutive point is packed entirely into a single point of time in $Y$; namely, the jump length in $X$ between a point $a$ and its consecutive point is precisely the height of the jump discontinuity at $a$ in $Y$.

Consider the (right-continuous version of the) time-reversed process $Y'$. In this process, the value at some time $s$ can be thought of as representing the \emph{time since the previous point}, so that, unlike $Y$, the process is not so deterministic. In some sense, the randomness of the jump length is spread-out across the interval between the jump discontinuities. Indeed, if $Y'_s=t$, then the probability that $Y'$ does not jump to 0 in the next $\epsilon$ time (i.e., that $Y'_{s+\delta}=t+\delta$ for all $0 \le \delta \le \epsilon$) is $\Pr(T>t+\epsilon \mid T>t)$. Thus, if $T$ is absolutely continuous with a density $f$, then the ``instantaneous rate'' at which $Y'$ jumps to 0 is $f(t)/\Pr(T>t)$, where $t$ is the value of $Y'$ at that instant. With this in mind, the idea used in the proof of \cref{prop:RP-special-case} may be thought of as a form of coupling-from-the-past for $Y'$.

This idea was motivated by a construction by Angel and the author~\cite{angel2019markov}, who showed that any discrete-time renewal process whose jump distribution has exponential tails and is not supported in a proper subgroup of $\Z$ is a finitary factor of an \iid\ process.
\end{remark}

%A \textbf{left-block-selection} point process $X$ is a (block) factor of a Poisson point process which can be described as follows. Let $\Lambda$ be a homogeneous Poisson point process, let $k$ be a positive integer, and let $A$ be a measurable subset of the space of finite subsets of $(-k,0)$. The points of $X$ are those of $x$ of $\Lambda$ for which $\Lambda|_{(x-k,x)} \in x+A$.
%Thus, we think of $A$ as a ``local selection scheme'': by examining the points to the left of $x$ which are at distance less than $k$ from $x$, the scheme determines whether or not $x$ is selected to be a point of $X$. We always assume that the selection scheme is non-trivial in the sense that there is a positive probability that some point of $\Lambda$ is selected (equivalently, infinitely many points are selected almost surely). We note that left-block-selection points processes are not necessarily renewal processes.
%Nevertheless, certain selection schemes do give rise to renewal processes, and we call these left-block-selection renewal processes.
%
%\begin{lemma}\label{lem:left-block-selection}
%Any left-block-selection renewal point process is finitarily isomorphic to any Poisson point process.
%\end{lemma}

\section{Regularization of jump distributions}
\label{sec:regularize-jd}

In this section, we show how one may reduce the problem for a renewal point process with jump distribution $T$ to a renewal point process with a modified jump distribution, which is more regular than $T$. To define this modified jump distribution, let $T_1,T_2,\dots$ be independent copies of $T$, let $N$ be a non-negative integer-valued random variable, and define
\[ T^*_N := T_1 + \cdots + T_N .\]
We shall be interested in two particular situations.
In the first situation, $N$ is independent of $(T_n)_n$ and is a geometric random variable with success probability $p \in (0,1)$, by which we mean that $\Pr(N=n) = p(1-p)^{n-1}$ for $n \ge 1$. In this case, we write $T^*_{\text{Geom}(p)}$ for $T^*_N$, and $T^*_{\text{Geom}(p)-1}$ for $T^*_{N-1}$.
In the second situation, $N$ is a stopping time with respect to $(T_n)_n$, obtained as the first $k$-consecutive hitting time of a set $A$, for a measurable set $A \subset \R$ and an integer $k \ge 1$. Precisely, 
\[ N = N_{k,A} := \min\{ n \ge k : T_n,T_{n-1},\dots,T_{n-k+1} \in A \} .\]
In this case, we write $T^*_{\text{$k$-hit}(A)}$ for $T^*_N$.

%We note that the former is random but is not a stopping time (formally it is, but the point is that it does not depend on $(T_i)$ at all), while the latter is a non-randomized stopping time.

%The third and final part is to show that one may always reduce to the case of \cref{prop:renewal-finitary} by replacing a given jump distribution $T$ with some other jump distribution $T^*_\mu$ having very regular tails.

The following lemma allows us to replace the original jump distribution $T$ with the modified jump distribution $T^*_{\text{$k$-hit}(A)}$ when proving that a stationary renewal point process is finitarily isomorphic to a Poisson point process. Recall the definition of continuously-divisible from \cref{sec:simply-marked} and that non-singular jump distributions are continuously-divisible.

\begin{lemma}\label{lem:stopping-time}
Let $T$ be a jump distribution, let $A \subset \R$ be a measurable set such that $\Pr(T \in A)>0$, and let $k \ge 1$. Suppose that $T^*_{\text{$k$-hit}(A)}$ is continuously-divisible. Then the stationary renewal point process with jump distribution $T$ is finitarily isomorphic to the stationary renewal point process with jump distribution $T^*_{\text{$k$-hit}(A)}$.
\end{lemma}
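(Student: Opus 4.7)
The plan is to exhibit, inside the renewal point process $X$ with jump distribution $T$, a ``special'' sub-process $Y$ whose jump distribution is precisely $T^*_{\text{$k$-hit}(A)}$, and then invoke \cref{lem:regenerative} to conclude that $X \fiso Y$.

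Concretely, I would let $X$ be a stationary renewal point process with jump distribution $T$ and enumerate its points as $\{p_j\}_{j \in \Z}$. Call a point $p_j$ \emph{special} if the $k$ consecutive gaps to its immediate left lie in $A$, i.e.\ if $p_{j-i+1}-p_{j-i} \in A$ for all $i=1,\dots,k$, and let $Y$ be the set of special points. Since $\Pr(T \in A)^k > 0$ and the gaps of $X$ are \iid, $Y$ is a.s.\ unbounded in both directions. The key observation is that, starting from any special point $p_j$, the subsequent gaps are independent copies of $T$, and the next special point is reached precisely after $N_{k,A}$ steps, so the distance to the next special point has distribution $T^*_{\text{$k$-hit}(A)}$. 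Together with stationarity inherited from $X$, this shows that $Y$ is a stationary renewal point process with jump distribution $T^*_{\text{$k$-hit}(A)}$.

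Next, I would check the two hypotheses needed to apply \cref{lem:regenerative}. First, $Y$ is a finitary factor of $X$: whether a point of $X$ lies in $Y$ depends only on the $k$ gaps to its left, so the restriction $Y \cap [-1,1]$ is determined by the restriction of $X$ to $[-W,1]$ where $W$ is the distance to the $(k+1)$-th point of $X$ to the left of $-1$, which is a.s.\ finite. Second, $Y$ is regenerative for $X$: conditioning on $Y$ amounts to conditioning on the total lengths of disjoint blocks of i.i.d.\ $T$-gaps (together with the stopping-time event $N_{k,A}$ for each block), so different blocks are conditionally independent and the conditional distribution of $X$ within each block depends only on the block length. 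Hence $Y$ is finitarily-regenerative for $X$.

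Since $T^*_{\text{$k$-hit}(A)}$ is assumed to be continuously-divisible, \cref{lem:regenerative} applies and yields $X \fiso Y$, which is the desired conclusion. The only really substantive step is checking the renewal property of the sub-process $Y$ and its regenerative role inside $X$; both amount to a careful bookkeeping of the i.i.d.\ gap structure, and I do not expect any genuine difficulty beyond that.
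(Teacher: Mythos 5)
Your high-level strategy is exactly the paper's: find a sub-process $Y$ of $X$ that has jump distribution $T^*_{\text{$k$-hit}(A)}$ and is finitarily-regenerative for $X$, then apply \cref{lem:regenerative}. However, your definition of the special points is wrong for $k\ge 2$, and this breaks the key claim that the inter-special-point distance has the right law.

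You declare $p_j$ special when the $k$ gaps immediately to its left all lie in $A$, i.e.\ when $L(p_j)\ge k$ (in the notation of the paper, where $L(x)$ is the length of the maximal run of $A$-gaps ending at $x$). But then, if $p_j$ is special and the next gap $T_1=p_{j+1}-p_j$ lies in $A$, the point $p_{j+1}$ also has its $k$ left gaps in $A$, hence is also special. So the distance from $p_j$ to the next special point equals a single copy of $T$ with probability $\Pr(T\in A)>0$. This cannot happen for $T^*_{\text{$k$-hit}(A)}=T_1+\cdots+T_{N_{k,A}}$, because $N_{k,A}\ge k\ge 2$ by definition. In other words, your claim ``the next special point is reached precisely after $N_{k,A}$ steps'' is false: when the runs of $A$-gaps are long, consecutive special points overlap, and the resulting point process is denser than the target renewal process. (For $k=1$ your condition happens to coincide with the correct one, which may be why the error is not obvious.)

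The fix, which is what the paper does, is to take as special those points $x$ for which $L(x)$ is a \emph{positive multiple of} $k$, i.e.\ $L(x)\in\{k,2k,3k,\dots\}$. This carves each maximal run of $A$-gaps into disjoint blocks of length exactly $k$, so that starting from a special point the stopping time $N_{k,A}$ (which explicitly requires $n\ge k$) is exactly the number of steps to the next special point. With this corrected definition, your remaining arguments — that $Y$ is a finitary factor of $X$ (the coding window reaches back roughly $L(x)$ additional gaps, which is a.s.\ finite), that $Y$ is regenerative for $X$, and that \cref{lem:regenerative} then gives $X\fiso Y$ — all go through and match the paper's proof.
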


\begin{proof}
Let $X$ and $Y$ denote the stationary renewal point processes with jump distributions $T$ and $T^*_{\text{$k$-hit}(A)}$, respectively. We may assume that $\Pr(T \in A)<1$, as otherwise $X$ and $Y$ have the same law.
Given a point $x$ of $X$, let $x=x_0>x_1>x_2>\cdots$ denote the points to its left in decreasing order, and let $L(x)$ denote the maximal $m \ge 0$ such that $x_{i-1}-x_i \in A$ for all $1 \le i \le m$.
%Call a point $x$ of $X$ special if $L(x) \in \{k,2k,3k,\dots\}$.
%Call the point $x_0$ nearly-special if $x_{i-1}-x_i$ belongs to $A$ for all $1 \le i \le k$, and call it very-special if, in addition, $x_k-x_{k+1}$ does not belong to $A$. Call $x_0$ special if it is either very-special or if, for some $m \ge 0$, the points $x_0,x_k,\dots,x_{km}$ are nearly-special and $x_{k(m+1)}$ is very-special.
Let $S$ denote the point process consisting of all points $x$ of $X$ for which $L(x) \in \{k,2k,3k,\dots\}$.
Observe that $S$ is a finitary factor of $X$. Also, it is straightforward that $S$ has the same law as $Y$ and that it is regenerative for $X$. Thus, \cref{lem:regenerative} yields that $S$ is finitarily isomorphic to $X$.
\end{proof}

%\begin{remark}
%\cref{lem:stopping-time} also holds with $T^*_{\text{Geom}(p)}$ instead of $T^*_{\text{hit}(A)}$.
%In both cases, the random variable $N$ can be seen as a randomized stopping time (for the former it is independent of $(T_n)_n$, while for the latter it is a non-randomized stopping time). In fact, the lemma holds with $T^*_N$ instead of $T^*_{\text{hit}(A;k)}$ for a variety of randomized stopping times.
%\end{remark}

For a jump distribution $T$ with exponential tails, define
\[ \cL(T) := \sup \{ \E[e^{\lambda T}] : \lambda>0\text{ such that }\E[e^{\lambda T}]<\infty \}.\]
We say that $T$ is \textbf{non-arithmetic} if it is not supported in $a\Z$ for any $a>0$.

\begin{lemma}\label{lem:exp-geom}
Let $T$ be a non-arithmetic jump distribution with exponential tails.
Let $0<p<1-\frac1{\cL(T)}$. Then there exist $b,c>0$ such that
\[ \Pr(T^*_{\text{Geom}(p)-1} > t) = c e^{-bt} (1+o(1)) \qquad\text{as }t \to \infty .\]
\end{lemma}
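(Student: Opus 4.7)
The plan is to apply an exponential change of measure (a Cram\'er tilt) and then invoke the key renewal theorem. First I would introduce the moment generating function $\phi(\lambda) := \E[e^{\lambda T}]$ and write $F$ for the distribution of $T$. The function $\phi$ is convex and continuous on the interval where it is finite, with $\phi(0) = 1$ and $\sup_\lambda \phi(\lambda) = \cL(T)$. The hypothesis $p < 1 - 1/\cL(T)$ is equivalent to $1/(1-p) < \cL(T)$, so by the intermediate value theorem there is a unique $b > 0$ with $(1-p)\phi(b) = 1$. Crucially, since $\phi(b) < \cL(T)$, the value $b$ lies strictly inside the interval of finiteness of $\phi$, and standard properties of moment generating functions then give $\phi'(b) < \infty$.

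Next I would tilt: define the probability measure $\hat F$ on $(0,\infty)$ by $d\hat F(s) := (1-p) e^{bs} dF(s)$, which is a valid probability distribution because $(1-p)\phi(b) = 1$. Then $\hat F$ has finite positive mean $\mu := (1-p)\phi'(b)$, and is non-arithmetic because its support coincides with that of $F$. The key change-of-measure identity is $dF^{*k}(s) = (1-p)^{-k} e^{-bs}\, d\hat F^{*k}(s)$ for every $k \ge 1$. Writing $S := T^*_{\text{Geom}(p)-1}$ and using $\Pr(\text{Geom}(p) - 1 = k) = p(1-p)^k$ (with the $k=0$ term contributing nothing to $\Pr(S > t)$ for $t > 0$), I would then sum over $k \ge 1$ to get
\[ \Pr(S > t) = p \sum_{k=1}^\infty (1-p)^k \int_{(t,\infty)} dF^{*k}(s) = p \int_{(t,\infty)} e^{-bs}\, d\hat U(s), \]
where $\hat U := \sum_{k \ge 1} \hat F^{*k}$ is the renewal measure (with the atom at $0$ removed) of the random walk with step distribution $\hat F$.

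Substituting $u = s - t$ gives $e^{bt}\Pr(S > t) = p \int_0^\infty e^{-bu}\, d\hat U(t + u)$. Since $\hat F$ is non-arithmetic with finite positive mean $\mu$, and since $u \mapsto e^{-bu}$ is directly Riemann integrable on $[0,\infty)$, the key renewal theorem yields
\[ e^{bt}\Pr(S > t) \longrightarrow \frac{p}{\mu} \int_0^\infty e^{-bu}\,du = \frac{p}{b\mu} \qquad \text{as } t \to \infty, \]
which is the desired conclusion with $c = p/(b\mu)$.

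The only place that demands care is verifying that the Cram\'er root $b$ lies strictly inside the interval on which $\phi$ is finite, so that $\phi'(b) < \infty$ and the tilted mean $\mu$ is finite; this is precisely where the strict inequality $p < 1 - 1/\cL(T)$ is used. (At the borderline $p = 1 - 1/\cL(T)$ one could have $b$ at the boundary of the finiteness interval with $\phi'(b) = \infty$, in which case the renewal theorem would not apply in this form.) Beyond that, the argument is a routine application of exponential tilting and the renewal theorem.
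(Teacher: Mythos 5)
Your proof is correct and takes essentially the same route as the paper: establish the Cram\'er root $b$ with $(1-p)\E[e^{bT}]=1$, then interpret $T^*_{\text{Geom}(p)-1}$ as the lifetime of a transient renewal process and invoke the key renewal theorem. The paper stops after locating $b\in(0,a)$ and cites Feller/Resnick for the tail asymptotics, while you spell out the standard underlying argument via exponential tilting; the care you take with the strictness of $p<1-1/\cL(T)$ ensuring $b$ lies strictly inside the domain of finiteness (so that the tilted mean $\mu=(1-p)\phi'(b)$ is finite) is exactly the point the paper is implicitly relying on.
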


We remark that the same holds for $T^*_{\text{Geom}(p)}$ (with the same $b$ and a different $c$). Indeed, this follows from the simple observation that the law of $T^*_{\text{Geom}(p)}$ is the same as the conditional law of $T^*_{\text{Geom}(p)-1}$ given that it is positive.

\begin{proof}
Let $F(\lambda) := \E[e^{\lambda T}]$ be the moment generating function of $T$. Since $T$ has exponential tails, $a := \sup \{\lambda \ge 0 : F(\lambda)<\infty\}$ is strictly positive (perhaps $\infty$). 
Note that $F$ is strictly increasing and continuous on $(-\infty,a)$, and that $F(0)=1$ and $F(\lambda) \to \cL(T)$ as $\lambda \to a^-$. Thus, for any $p$ as in the lemma, there exists a unique $b \in (0,a)$ such that $(1-p)F(b) = 1$.

The claimed estimate can now be obtained by exploiting a relation with renewal theory. Indeed, $T^*_{\text{Geom}(p)-1}$ is the terminating time of a transient renewal process, and the lemma can be obtained as a corollary of the key renewal theorem; see~\cite[XI.6, Theorem~2 and (6.16)]{feller2008introduction} or \cite[Chapter~3.11]{resnick2013adventures}.
\end{proof}

\begin{remark}
Besides the connection with renewal theory, random geometric sums such as $T^*_{\text{Geom}(p)}$ are also related to Cram\'er's theorem, R\'enyi's theorem and certain Tauberian theorems (see, e.g., \cite{feller2008introduction,kalashnikov2013geometric}).
For example, rather than using the key renewal theorem in the proof above, one could alternatively appeal to a Tauberian theorem (see, e.g., \cite[Theorem~2.1]{beare2017geometrically}), applied to the moment generating function of $T^*_{\text{Geom}(p)}$, to obtain \cref{lem:exp-geom}.
We will eventually apply the lemma to an absolutely continuous jump distribution, in which case, more precise estimates for the tail probabilities of $T^*_{\text{Geom}(p)}$ are available. For example, \cite[Theorem~2]{blanchet2007uniform} shows that when $p$ is sufficiently small, the asymptotics $ce^{-bt}(1+o(1))$ can be sharpened to $ce^{-bt} + o(e^{-b't})$ for some $b'>b$. We mention that the analogous statement for arithmetic jump distributions can be shown using basic complex analysis; see~\cite[Lemma~6]{angel2019markov}.
\end{remark}

The following lemma, together with \cref{lem:stopping-time}, will allow us to successively improve the regularity properties of the jump distribution, eventually ensuring that it satisfies the assumptions of \cref{prop:RP-special-case}. Together this leads to a proof of \cref{thm:RP} and of the if part of \cref{thm:RP2}.

\newlist{lemmaenum}{enumerate}{1}
\setlist[lemmaenum]{label=(\arabic*), ref=\thethm(\arabic*)}
\crefalias{lemmaenumi}{lemma}

\begin{lemma}\label{lem:regularize-T}
Let $T$ be a jump distribution.
\begin{lemmaenum}
 \item\label{lem:regularize-T-exp} If $T$ has exponential tails, then $T^*_{\text{$k$-hit}(A)}$ has exponential tails for any $A$ and $k$.
 \item\label{lem:regularize-T-power} If $T$ has a non-singular $k$-th convolution power, then $T^*_{\text{$k$-hit}(A)}$ is non-singular for some $A$.
 \item\label{lem:regularize-T-nonsing} If $T$ is non-singular, then there exists $A$ such that $T^*_{\text{2-hit}(A)}$ is absolutely continuous with a continuous and bounded density.
 \item\label{lem:regularize-T-abscont} If $T$ is absolutely continuous with a continuous density and exponential tails, then there exists $A$ such that $T^*_{\text{1-hit}(A)}$ is absolutely continuous with a density $f$ satisfying
\begin{equation}\label{eq:1-hit-regularity}
  \liminf_{t \to \infty} \frac{f(t)}{\Pr(T^*_{\text{1-hit}(A)}>t)} > 0 \qquad\text{and}\qquad \sup_{t \ge 0} \frac{f(t)}{\Pr(T^*_{\text{1-hit}(A)}>t)} < \infty .
\end{equation}
\end{lemmaenum}
\end{lemma}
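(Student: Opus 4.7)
The plan is to treat the four parts in order, with part (4) being the main difficulty.

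Part (1) is a standard split-and-Chernoff argument: the stopping time $N = N_{k,A}$ is dominated by a geometric-type waiting-time-for-pattern variable, hence has exponential tails. Writing $\Pr(T^*_N > t) \le \Pr(N > L) + \Pr(T_1+\cdots+T_L > t)$ with $L = \alpha t$ for sufficiently small $\alpha$, both terms decay exponentially in $t$ (the first by the exponential tails of $N$, the second by Chernoff applied to i.i.d.\ copies of $T$). Part (2) is trivial: taking $A = \R$ forces $N = k$ deterministically, so $T^*_N$ equals the $k$-th convolution power of $T$, which is non-singular by assumption.

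For part (3), non-singularity yields a set $A_0$ with $\Pr(T\in A_0) > 0$ and absolutely continuous conditional density $h_0$. Choose $M$ large enough that $A := A_0 \cap \{h_0 \le M\}$ still has $\Pr(T \in A) > 0$; then the density $h$ of $T\mid T\in A$ is bounded, and in particular lies in $L^2$. The structural observation is that the event $\{N_{2,A}=n\}$ factors (for $n\ge 2$) into independent events on $(T_1,\dots,T_{n-2})$, on $T_{n-1}$, and on $T_n$; consequently, conditional on $\{N_{2,A}=n\}$ the pair $(T_{N-1},T_N)$ is i.i.d.\ with law $T\mid T\in A$ and independent of the earlier variables, so $T_{N-1}+T_N$ has density $h*h$ independent of $n$. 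By Cauchy--Schwarz and $L^2$-translation continuity, $h*h$ is uniformly continuous and bounded by $\|h\|_2^2$. Writing $T^*_{N_{2,A}}$ as an earlier sum plus an independent copy of $T_{N-1}+T_N$, its density is a convolution against $h*h$, hence remains continuous and bounded.

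For part (4), let $g$ be the continuous density of $T$, pick $t_*>0$ with $g(t_*)>0$, and set $A = [t_*-\delta,\,t_*+\delta]$ with $0 < \delta < t_*$ so small that $g$ is bounded above and below by positive constants on $A$ and $p := \Pr(T\in A)$ satisfies $(1-p)\,\cL(\tilde T)>1$, where $\tilde T := T\mid T\notin A$. The latter holds for $p$ small because $\E[e^{\lambda T}]>1$ for every $\lambda>0$ (as $T>0$) and the subtracted mass $\E[e^{\lambda T}\mathbf{1}_{T\in A}]$ vanishes as $A$ shrinks. Conditioning on $N := N_{1,A}$ gives $T^*_N \eqd S + R$, with $S := \tilde T^*_{\mathrm{Geom}(p)-1}$ and $R\sim T\mid T\in A$ independent of $S$. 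Applying \cref{lem:exp-geom} to $\tilde T$ yields $\Pr(S>t) = ce^{-bt}(1+o(1))$; averaging over the bounded $R$ gives $\Pr(T^*_N>t) = c\,\E[e^{bR}]\,e^{-bt}(1+o(1))$. The density of $T^*_N$ is
\[
f(t) \;=\; p\,g_R(t) + \int_0^\infty g_R(t-s)\,f_S(s)\,ds,
\]
which is globally bounded by $\|g_R\|_\infty$ since the two terms have total mass $p$ and $1-p$, respectively. For $t>t_*+\delta$ the first term vanishes and the integrand is supported on $J_t := (t-t_*-\delta,\,t-t_*+\delta]$, yielding
\[
(\inf\nolimits_A g_R)\cdot\Pr(S\in J_t) \;\le\; f(t) \;\le\; (\sup\nolimits_A g_R)\cdot\Pr(S\in J_t).
\]
By the tail asymptotics of $S$, $\Pr(S\in J_t) = c\bigl(e^{b(t_*+\delta)} - e^{b(t_*-\delta)}\bigr)e^{-bt}(1+o(1))$, so $f(t)\asymp e^{-bt}$, matching the tail of $T^*_N$. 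Combined with the uniform boundedness of $f$ and the continuity and positivity of $\Pr(T^*_N>t)$ on compact sets, both conditions in \eqref{eq:1-hit-regularity} follow.

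The main obstacle is part (4): one needs density-level control of $T^*_{1\text{-hit}(A)}$, whereas \cref{lem:exp-geom} supplies only tail asymptotics for $S$. The trick is that convolving $f_S$ against the bounded and bounded-below density $g_R$ over a bounded window automatically converts a tail estimate for $S$ into a matching two-sided pointwise density estimate for $T^*_N$, bypassing any need for density asymptotics of $S$ itself.
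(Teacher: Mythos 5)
Parts (1), (3), and (4) follow essentially the same route as the paper. Part (2), however, has a genuine gap: you take $A = \R$, but the paper explicitly requires $0 < \Pr(T \in A) < 1$ (this constraint is stated in the line immediately following the lemma). The constraint is not cosmetic: the only place \cref{lem:regularize-T-power} is used is in tandem with \cref{lem:stopping-time}, and when $\Pr(T\in A)=1$ the regenerative construction in the proof of \cref{lem:stopping-time} fails outright, since then $L(x)=\infty$ for every point $x$, the special-point process $S$ is empty, and the finitary isomorphism to the sub-process cannot even be defined (indeed, with $A=\R$ and $k\ge 2$, the $k$-th sub-process is not a factor of $X$ at all, because the phase is extra randomness). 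The paper's argument for Part (2) is therefore necessarily nontrivial: writing $\epsilon$ for the mass of the absolutely continuous part of the $k$-th convolution power $\sigma$, one picks a bounded $A$ with $p := \Pr(T\in A) > (1-\epsilon)^{1/k}$ but $p<1$; the decomposition $\sigma = p^k\mu + (1-p^k)\nu$, with $\mu$ the $k$-th convolution power of $T|_A$, forces $\mu$ to be non-singular since $1-p^k<\epsilon$, and then $T^*_{k\text{-hit}(A)} \eqd S+S'$ with $S'\sim\mu$ independent is non-singular.

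The same constraint is also at risk in your Part (3): your set $A = A_0\cap\{h_0\le M\}$ can satisfy $\Pr(T\in A)=1$ whenever $T$ is already absolutely continuous with a bounded density (e.g. uniform on $[0,1]$). The paper guards against this by forcing $A\subset(0,d)$ with $d$ chosen so that $\{g>0\}\cap(d,\infty)$ carries positive mass. This is a minor issue and fixable by additionally intersecting your $A$ with a suitable bounded interval, but it should be addressed. Your Part (4), including the closing observation that convolving $f_S$ against the bounded-above-and-below, compactly supported density $g_R$ turns the tail asymptotics of $S$ from \cref{lem:exp-geom} into a two-sided pointwise density bound, is the same mechanism the paper uses and is correct.
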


In the above, $A$ is always a measurable subset of $\R$ such that $0<\Pr(T \in A)<1$. In the proof below, we let $T|_A$ denote a random variable having the conditional distribution of $T$ given that $T \in A$. 

\begin{proof}
%We first set up some notation and make a simple observation.
%Denote $p := \Pr(T \in A)$.
%Observe that $T^*_{\text{hit}(A)}$ has the same law as $S+S'$, where $S$ and $S'$ are independent random variables having the laws of $(T|_{A^c})^*_{\text{Geom}(p)-1}$ and $T|_A$, respectively.

\smallskip\textbf{(1)}
Noting that $N_{k,A}$ has exponential tails, this follows from the standard fact that the sum of a random number $N$ of \iid\ random variables having exponential tails also has exponential tails if $N$ does (regardless of whether $N$ is independent of the summands).
%It remains to show that $T^*_{\text{hit}(A;2)}$ has exponential tails. Indeed, writing $T^*_{\text{hit}(A;2)}=T_1+\dots+T_N$ as in the definition of $T^*_{\text{hit}(A;2)}$, this follows from the fact $N$ and $T$ have exponential tails. Specifically, if $C,c>0$ are such that $\Pr(N>n) \le Ce^{-n}$ and $\Pr(T>t) \le Ce^{-ct}$ for all $n \ge 1$ and $t \ge 1$, then

\smallskip\textbf{(2)}
Let $\sigma$ be the $k$-th convolution power of $T$, and let $\epsilon \in (0,1]$ be the mass of the absolutely continuous part of $\sigma$.
Let $A \subset \R$ be any measurable set such that $p := \Pr(T \in A)>\sqrt[k]{1-\epsilon}$. We claim that $T|_A$ is non-singular. Indeed, $\sigma$ can be written as $p^k \mu + (1-p^k)\nu$, where $\mu$ is the $k$-th convolution power of $T|_A$ and $\nu$ is some probability measure. Since $1-p^k < \epsilon$, we deduce that $\mu$ is non-singular.
Observe that $T^*_{\text{$k$-hit}(A)}$ has the law of $S+S'$, where $S$ and $S'$ are independent random variables, with $S'$ distributed according to $\mu$. Thus, since $S'$ is non-singular, so is $T^*_{\text{$k$-hit}(A)}$.

\smallskip\textbf{(3)}
Let $B \subset \R$ be a Lebesgue-null set supporting the singular part of $T$.
Let $g$ be the density of the absolutely continuous part of $T$. By assumption, $\Pr(T \notin B) = \int_\R g > 0$.
Let $d>0$ be such that both $\{g>0\} \cap (0,d)$ and $\{g>0\} \cap (d,\infty)$ have positive Lebesgue measure.
Let $\lambda>0$ be such that $\{ 0<g \le \lambda \} \cap (0,d)$ has positive Lebesgue measure.
Set
\[ A := \{ g \le \lambda \} \cap (0,d) \setminus B \]
and note $\Pr(T \in A) = \int_A g \in (0,1)$.

Observe that $T^*_{\text{2-hit}(A)}$ has the law of $S_0+S_1+S_2$, where $S_0$, $S_1$ and $S_2$ are independent random variables, with $S_1$ and $S_2$ having the law of $T|_A$. Since $T|_A$ is absolutely continuous with a bounded density, it follows that $S_1+S_2$ is absolutely continuous with a continuous and bounded density. Since the convolution of a continuous and bounded function with a probability measure is again such a function, $S_0+S_1+S_2$ is also absolutely continuous with a continuous and bounded density.

\smallskip\textbf{(4)}
Let $g$ be the continuous density of $T$. Let $d>0$ be such that $g(d)>0$. By continuity, there exists $d'>d$ and $\lambda' \ge \lambda>0$ such that $A:=(d,d')$ satisfies $A \subset \{ \lambda \le g \le \lambda' \}$.
Note that $p := \Pr(T \in A) = \int_A g > 0$.
Note also that
\[ \frac{\E[e^{\gamma T}] - pe^{\gamma d'}}{1-p} \le \E[e^{\gamma T|_{A^c}}] \le \frac{\E[e^{\gamma T}]}{1-p} \qquad\text{for all }\gamma \ge 0 .\]
Since $p \to 0$ as $d' \to d$, it follows from this that $\cL(T|_{A^c}) \to \cL(T)$ as $d' \to d$. Thus, by decreasing $d'$ if necessary, we may ensure that $p<1-\cL(T|_{A^c})^{-1}$.

Observe that $T^*_{\text{1-hit}(A)}$ has the same law as $S + S'$, where $S$ and $S'$ are independent random variables having the laws of $(T|_{A^c})^*_{\text{Geom}(p)-1}$ and $T|_A$, respectively. Since $S'$ is absolutely continuous with a bounded density, $S+S'$ is also absolutely continuous with a bounded density $f$. Also, $S$ has no atoms apart from the one at 0.

It remains to establish~\eqref{eq:1-hit-regularity}.
Note first that we may replace the supremum in~\eqref{eq:1-hit-regularity} with a limsup, as the finiteness of the supremum will then follow from the fact that $f$ is bounded. Thus, we now aim to estimate $f(t)$ for large $t$. To do this, we proceed to estimate the probability that $S+S'$ belongs to the interval $(t,t+\epsilon)$ for small $\epsilon>0$. Observe that $S'$ has density between $\lambda/p$ and $\lambda'/p$ on $(d,d')$ and density zero elsewhere. It follows that, almost surely,
\[
\Pr(t<S + S'<t+\epsilon \mid S)
\begin{cases}
 \le \lambda'\epsilon/p & \\
 \ge \lambda\epsilon/p &\quad\text{on the event that }S \in (t-d'+\epsilon,t-d) \\
= 0 &\quad\text{on the event that }S \notin (t-d',t-d+\epsilon)
\end{cases} .\]
Taking expectation, we get that
\[ \tfrac{\lambda\epsilon}{p} \cdot \Pr(t-d'+\epsilon<S<t-d) \le \Pr(t<S+S'<t+\epsilon) \le \tfrac{\lambda'\epsilon}{p} \cdot \Pr(t-d'<S<t-d+\epsilon) .\]
Thus, taking $\epsilon \to 0$, we get that
\[ \frac{\lambda}{p} \le \frac{f(t)}{\Pr(t-d'<S<t-d)} \le \frac{\lambda'}{p} .\]
Thus, \eqref{eq:1-hit-regularity} will follow if we show that
\begin{equation}\label{eq:1-hit-bounded}
\frac{\Pr(t-d'<S<t-d)}{\Pr(S+S' > t)} \qquad\text{is bounded above and below as $t \to \infty$}.
\end{equation}
Since $p<1-\cL(T|_{A^c})^{-1}$, \cref{lem:exp-geom} implies that there exist $c>0$ and $0<\nu<1$ such that
\[ \Pr(S > t) = c \nu^t (1+o(1)) \qquad\text{as }t \to \infty .\]
It easily follows from this that $\Pr(t-d'<S<t-d)=\Theta(\nu^t)$ and $\Pr(S+S'>t)=\Theta(\nu^t)$ as $t\to \infty$.
Therefore, \eqref{eq:1-hit-bounded} holds and the proof is complete.
\end{proof}

\section{Proof of Theorem~\ref{thm:RP2}}
\label{sec:proof-main-thm}

We now explain how the `if' part of \cref{thm:RP2} follows from the results given in \cref{sec:renewal-with-regular-jd} and \cref{sec:regularize-jd}. The proof of the `only if' part is given in a separate proposition below.

\begin{proof}[Proof of `if' part of \cref{thm:RP2}]
Let $X$ be a stationary renewal point process whose jump distribution $T$ has exponential tails and a non-singular convolution power.
Recall that non-singular jump distributions are continuously-divisible and that, by \cref{lem:regularize-T-exp}, $T^*_{k\text{-hit}(A)}$ has exponential tails for any $A$ and $k$.
Thus, \cref{lem:stopping-time} and \cref{lem:regularize-T-power} imply that by replacing $T$ with $T^*_{k\text{-hit}(A)}$ for a suitable $A$ and $k$, we may assume that $T$ is non-singular.
\cref{lem:stopping-time} and \cref{lem:regularize-T-nonsing} now imply that by replacing $T$ with $T^*_{2\text{-hit}(A)}$ for some other suitable $A$, we may assume that $T$ is absolutely continuous with a continuous density. \cref{lem:stopping-time} and \cref{lem:regularize-T-abscont} now imply that by replacing $T$ with $T^*_{1\text{-hit}(A)}$ for yet another $A$, we may assume that $T$ is absolutely continuous with a density $f$ satisfying~\eqref{eq:regular-density}. Finally, \cref{prop:RP-special-case} yields that $X$ is finitarily isomorphic to a Poisson point process.
\end{proof}

The following proposition shows that the assumptions of \cref{thm:RP2} are necessary even if one only desires to obtain a finitary factor of a Poisson point process. In particular, it implies the `only if' part of \cref{thm:RP2}.

\begin{prop}\label{prop:necessary}
If a stationary renewal point process is a finitary factor of a Poisson point process, then its jump distribution has exponential tails and a non-singular convolution power.
\end{prop}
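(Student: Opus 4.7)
The plan is to prove the two necessary conditions separately. The first, exponential tails, admits a clean block-independence argument; the second, a non-singular convolution power, is the main obstacle.

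\textit{Exponential tails.} Let $\varphi$ be the finitary factor from a Poisson point process $\Pi$ onto $X$, and let $R_s$ denote the coding radius of $\varphi$ at $s$. Set $p_0 := \Pr(X \cap [-1,1] = \emptyset)$; since $X$ is a stationary renewal point process, its intensity $1/\E[T]$ is positive, so $p_0 < 1$. Choose $K>0$ large enough that $\Pr(R_0 > K) \le (1-p_0)/2$, and partition $\R$ into the disjoint blocks $J_k := [2kK, 2(k+1)K]$. For each $k$, define
\[
D_k := \bigl\{X \cap [(2k+1)K-1,(2k+1)K+1] = \emptyset\bigr\} \cap \bigl\{R_{(2k+1)K} \le K\bigr\}, \qquad F_k := \bigl\{R_{(2k+1)K} > K\bigr\}.
\]
Both are measurable with respect to $\Pi|_{J_k}$: by definition, $\{R_{(2k+1)K} \le K\}$ depends only on $\Pi|_{J_k}$, and on this event the restriction of $X$ to $[(2k+1)K-1,(2k+1)K+1]$ is determined by $\Pi|_{J_k}$ as well. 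Since the blocks are disjoint and $\Pi$ is Poisson, these pairs of events are jointly independent across $k$. If $X \cap [0,t] = \emptyset$, then for each $k$ with $J_k \subset [0,t]$ either $D_k$ or $F_k$ holds, and combining $\Pr(D_k) \le p_0$ and $\Pr(F_k) \le (1-p_0)/2$ with independence gives
\[
\Pr(X \cap [0,t] = \emptyset) \le \left(\tfrac{1+p_0}{2}\right)^{\lfloor t/(2K)\rfloor - 1}.
\]
The standard stationary-renewal identity $\Pr(X \cap [0,t] = \emptyset) = \Pr(p_1 > t) = \frac{1}{\E[T]}\int_t^\infty \Pr(T>s)\,ds \ge \frac{t}{\E[T]}\Pr(T>2t)$ then translates this into exponential tails for $T$.

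\textit{Non-singular convolution power.} Here I would argue by contradiction: assume every convolution power $T^{*k}$ is singular, and aim to derive an absolutely continuous component somewhere, contradicting the assumption. The strategy is to localize: on an event of positive probability a string of consecutive points $p_0 < p_1 < \cdots < p_k$ of $X$ near the origin is a measurable function of the restriction of $\Pi$ to a fixed bounded window, and, after further conditioning on the Poisson count in that window, a measurable function of finitely many iid uniform random variables. A Palm-calculus or change-of-measure argument should then transfer absolute continuity from the Poisson randomness to the joint law of the increments $(p_1-p_0,\ldots,p_k-p_{k-1})$, and hence to the law of $p_k - p_0 = T_1 + \cdots + T_k$, for some $k$.

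The main obstacle is this second step: exponential tails drop out cleanly from block-independence plus a finite coding window, but forcing some convolution power to be non-singular requires a more delicate quantitative interaction between local Poisson randomness and the renewal product structure of $X$'s gaps, and this is where the essential work of the proposition lies.
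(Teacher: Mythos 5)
Your exponential-tails argument is essentially the paper's, up to technicalities. Both rest on block-independence across a Poisson input combined with a finite coding window. One subtlety you gloss over: you assert that $\{R_{(2k+1)K}\le K\}$ is ``by definition'' measurable with respect to $\Pi|_{J_k}$, but the paper's definition of a coding window $w$ is just any a.s.-finite measurable function on the input space with the agreement property, and for such $w$ the event $\{w\le K\}$ need \emph{not} be locally determined. (One would like to pass to the minimal coding radius, but its Borel measurability is not obvious.) The paper handles this by instead extracting a positive-probability event $F\subseteq\{S_0<1,\,W<r\}$ on which $S_0\cdot\1_F$ \emph{is} determined by $\Lambda\cap(-r,r)$, and then working with the independent translates $F_n=\theta_{-2nr}(F)$. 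This is a small repair, not a different idea; your part one is sound in spirit.

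The non-singular-convolution-power part is where the proposal stops short, as you acknowledge, and the sketch you give has a genuine gap. Showing that, on a positive-probability event, $(p_1-p_0,\dots,p_k-p_{k-1})$ is a measurable function of finitely many i.i.d.\ uniform variables does \emph{not} imply absolute continuity of that vector: a Borel function of uniforms can push forward to an arbitrary distribution (including a purely singular one), so ``transferring absolute continuity from the Poisson randomness'' does not follow from localization plus Palm calculus alone. (There is also a minor indexing slip: $p_k-p_0$ includes the size-biased gap $p_1-p_0$, which is not distributed as $T$; one should consider $p_{k+1}-p_1=T_1+\cdots+T_k$.)

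The paper's route is different and, in a sense, you already have its key ingredient in hand. The stationary-renewal identity you invoke in part one shows that $S_0:=p_1$ has the absolutely continuous equilibrium density $\Pr(T>t)/\E T$ for \emph{every} stationary renewal process, with no assumption on $T$; this is exactly the Palm-inversion fact the paper uses. The finitary structure is then used not to manufacture absolute continuity but to produce conditional independence: with $F$ and $F_1:=\theta_{-2r}(F)$ as above, $S_0$ and $S_{2r}$ are conditionally independent given $F\cap F_1$, and the conditional law of $S_0$ given $F$ is absolutely continuous (being a conditioning of the absolutely continuous law of $S_0$). Hence $S_{2r}-S_0$ is non-singular. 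On the other hand, $S_{2r}-S_0=T_1+\cdots+T_{N_{2r}}$, whose law is a mixture over $n$ of conditionings of the convolution powers $T^{*n}$; if every convolution power were singular, each such conditioning and hence the mixture would be singular. This contradiction shows some convolution power is non-singular. That pivot---from ``derive absolute continuity from the Poisson'' to ``use the a.c.\ equilibrium law of $S_0$ and only use the Poisson to split $S_0$ from $S_{2r}$''---is the missing idea in your proposal.
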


Throughout the proof, we use two simple observations. First, given a real-valued random variable $T$ and a positive-probability event $A$, if $T$ is singular then so is $T|_A$ (where $T|_A$ denotes a random variable having the conditional distribution of $T$ given that $T \in A$), and if $T$ is absolutely continuous then so is $T|_A$. This is easily seen by writing the law of $T$ as a mixture of the laws of $T|_A$ and $T|_{A^c}$. Second, if $T_1$ and $T_2$ are independent, then $T_1+T_2$ is non-singular whenever $T_1$ is, and it is absolutely continuous whenever $T_1$ is.

\begin{proof}
Let $X$ be a stationary renewal point process with jump distribution $T$.
Let $0<p_1<p_2<\cdots$ be the points of $X$ in $(0,\infty)$, and define $T_n := p_{n+1}-p_n$ for $n \ge 1$. By the definition of a renewal point process, $\{T_n\}_{n=1}^\infty$ are independent copies of $T$.
For $t \ge 0$, let $N_t$ be the number of points of $X$ in $(0,t]$ and let $S_t$ be the smallest point of $X$ in $(t,\infty)$. Note that $S_t-S_0 = T_1+\cdots+T_{N_t}$.

Suppose now that $X$ is a finitary factor of a Poisson point process $\Lambda$.
Note that $S_0<1$ if and only if $X$ has a point in $(0,1)$. Since this event has positive probability and since the coding window $W$ is almost surely finite, there exists a finite $r$ such that $\Pr(S_0<1,\,W<r)>0$. Thus, there exists a positive-probability event $F$ on which $S_0<1$ and such that $S_0 \cdot \1_F$ is determined by $\Lambda \cap (-r,r)$ (since our definition of finitary does not require the coding window to be a stopping time, $F$ might not be the event $\{S_0<1,W<r\}$ itself). Define the translated events $F_n := \theta_{-2nr}(F)$ and note that $\{F_n\}_{n \in \Z}$ are independent.

Since $S_1<1+2nr$ on the event $F_n$ for $n \ge 0$, it follows from the independence the events $\{F_n\}_n$ that $S_1$ has exponential tails. Since $S_1 \ge T_1$ on the event that $N_1 \ge 1$ and since $\Pr(N_1 \ge 1)>0$, it easily follows that $T$ has exponential tails.

Let us now show that $T$ has a non-singular convolution power. The law of $S_t-S_0$ is a mixture of the conditional laws of $T_1+\cdots+T_n$ given that $N_t=n$. If all convolution powers of $T$ are singular, then so are the latter conditional laws, and hence so is $S_t-S_0$. Thus, it suffices to show that $S_t-S_0$ is non-singular for some $t$. We shall show this for $t=2r$.
To see this, first observe that $S_0$ and $S_{2r}$ are conditionally independent given $F \cap F_1$. Thus, to show that $S_{2r}-S_0$ is non-singular, it suffices to show that the conditional law of $S_0$ given $F$ is non-singular.
In fact, we claim that this conditional law is absolutely continuous. To this end, it suffices to show that the unconditional law of $S_0$ is absolutely continuous. This, in turn, can be seen using the Palm distribution inversion formula~\cite[Proposition~11.3]{kallenberg2006foundations}, which says that $\E[f(X)] = \frac{1}{\E \mathsf S_0} \cdot \E \int_0^{\mathsf S_0} f(\theta_s(Y))ds$ for any non-negative measurable function $f$ and for some (non-stationary) point process $Y$ called the Palm distribution of $X$, where $\mathsf S_0$ denotes the smallest point of $Y$ in $(0,\infty)$. Indeed, applying this formula with $f(X):=\1_{\{S_0>t\}}$ yields that $\Pr(S_0>t)=\frac{\E \max\{\mathsf S_0 - t,0\}}{\E \mathsf S_0}$, which shows that $S_0$ has density $g(t) := \frac{\Pr(\mathsf S_0>t)}{\E \mathsf S_0}$.
\end{proof}

\medskip
\paragraph{\textbf{Acknowledgments}}
I am grateful to Nishant Chandgotia, Edwin Perkins and Zemer Kosloff for helpful discussions and comments.

\bibliographystyle{amsplain}
%\nocite{*}
\bibliography{library}

\end{document}